\tikzstyle arrowstyle=[scale=1]
\tikzstyle directed=[postaction={decorate,
decoration={markings,mark=at position .65 with {\arrow[arrowstyle]{stealth}}}}]
\newtheorem{lemma}{Lemma}[section]
\newtheorem{theorem}[lemma]{Theorem}
\newtheorem{prop}[lemma]{Proposition}
\newtheorem{cor}[lemma]{Corollary}
\newtheorem{claim}[lemma]{Claim}
\newtheorem{lem-def}[lemma]{Lemma/Definition}
 \newtheorem*{thmA}{Theorem A}
 \newtheorem*{thmA'}{Theorem A'}
 \newtheorem*{thmB}{Theorem B}
 \newtheorem*{thmC}{Theorem C}
 \newtheorem*{thmD}{Theorem D}
\theoremstyle{definition}
\newtheorem{definition}[lemma]{Definition}
\newtheorem{example}[lemma]{Example}
\newtheorem{notation}[lemma]{Notation} 
\newtheorem{construction}[lemma]{Construction}
\newtheorem{remark}[lemma]{Remark}
\newtheorem*{conventions}{Conventions}
\newcommand{\Hilb}{\operatorname{Hilb}}
\newcommand{\Span}{\operatorname{span}}
\newcommand{\inn}{\operatorname{In}}
\newcommand{\evac}
{\operatorname{evac}}
\newcommand{\length}{\operatorname{length}}
\newcommand{\id}{\operatorname{id}}
\newcommand{\antidiag}{\operatorname{antidiag}}
\newcommand{\lad}{\operatorname{lad}}
\newcommand{\rank}{\operatorname{rank}}
\newcommand{\GL}{\mathrm{GL}}
\newcommand{\RNum}[1]{\uppercase\expandafter{\romannumeral #1\relax}}
\newcommand{\sgn}{\text{sign}}
\renewcommand{\aa}{\mathbf a}
\newcommand{\bb}{\mathbf b}
\newcommand{\cc}{\mathbf c}
\newcommand{\mm}{\mathbf m}
\renewcommand{\k}{\mathbb{k}}
\newcommand{\cI}{\mathcal{I}}
\newcommand{\cJ}{\mathcal{J}}
\newcommand{\cP}{\mathcal{P}}
\newcommand{\NN}{\mathbb{N}}
\newcommand{\ZZ}{\mathbb{Z}}
\newcommand{\rk}{\textrm{rk}}
\DeclarePairedDelimiter\abs{\lvert}{\rvert}%
\newcommand{\Mon}{\text{Mon}}
\newcommand{\res}{\mathrm{res}}
\def\ZZ{{\mathbb Z}}
\def\NN{{\mathbb N}}
\newcommand{\ch}{\mathrm{ch}}
\newcommand{\symm}{\mathcal{S}}
\newcommand{\gr}{\mathrm{Gr}}
\def\yshort{\ytableaushort}
\newcommand{\rr}[1]{\textcolor{red}{\textbf{{#1}}}}
\newcommand{\topp}{\operatorname{top}}
\newcommand{\bound}{\operatorname{Bound}}
\newcommand{\content}{\operatorname{content}}
\newcommand{\lub}{\operatorname{lub}}
\newcommand{\dem}{\operatorname{Dem}}
\newcommand{\Red}{\operatorname{Red}}
\newcommand{\hilb}{\operatorname{Hilb}}
\newcommand{\Gr}{\operatorname{Gr}}
\newcommand{\G}{\mathfrak{G}}
\newcommand{\genP}{\mathfrak{Y}}
\newcommand{\Lenart}{\mathcal{L}}
\begin{document}

\title[Standard Monomials for Positroid Varieties]{Standard Monomials for Positroid Varieties}
\author{Ayah Almousa}
\author{Shiliang Gao}
\author{Daoji Huang}

\keywords{}

\date{\today}
\begin{abstract}
We give an explicit characterization of the standard monomials for positroid varieties with respect to the Hodge degeneration and give a Gr\"obner basis. 
 Furthermore, we 
 show that promotion and evacuation 
 biject standard monomials of a positroid variety with those of its cyclic shifts and $w_0$-reflection, respectively.
The connection to promotion allows us to identify standard monomials of a positroid variety with Lam's cyclic Demazure crystal. Using a recurrence on the Hilbert series, we give an inductive formula for the character of cyclic Demazure modules.
\end{abstract}
\maketitle
 \tableofcontents
\section{Introduction}
\label{sec:intro}
Classical work of Hodge \cite{Hodge} described a particular set of bases for the homogeneous coordinate rings of the Grassmannian $\gr(k,n)$ and its Schubert varieties under the Pl\"ucker embedding.
Building on Hodge's ideas, Seshadri initiated the study of standard monomial theory (SMT), with the aim of giving standard monomial bases for the space of global sections of line bundles over a (generalized) flag variety $G/P$.
The foundation of SMT was built in the works of Lakshmibai, Musili, and Seshadri,  \cites{GmodP1, GmodP2, GmodP3, GmodP4, GmodP5}.
The tools developed in SMT yield a wide range of applications, such as derivations of geometric properties (Cohen-Macaulayness, normality, singularities, cohomological vanishings, etc.)\! of Schubert varieties and character formulas for Demazure modules.

The main goal of our paper is to extend the work of Hodge to \emph{positroid varieties} in $\gr(k,n)$, based on works of Knutson--Lam--Speyer. 
Positroid varieties are the closed strata of the positroid stratification, the common refinement of the cyclically permuted Bruhat decompositions.
These are also exactly the projections of Richardson varieties from the complete flag variety to $\gr(k,n)$, and are
 the only compatibly split subvarieties with respect to the standard Frobenius splitting on the Grassmannian \cite{KLS2014projections}.
Since their introduction, positroid varieties have been of great interest in the study of Schubert calculus \cite{K14interval}, cluster algebras \cite{GL23cluster}, and positive geometries \cite{AHBL17}.

Using the standard monomial theory of Richardson varieties \cites{ lakshmibai2003richardson,BrionLak},
Knutson--Lam--Speyer \cite{KLS2014projections} described the Stanley--Reisner complexes of positroid varieties 
under the \emph{Hodge degeneration}, a special kind of Gr\"obner degeneration.
However, they did not give an explicit description (i.e., without referring to the chains in the Bruhat order of $G/B$) of the standard monomials. One primary goal of our paper is to give an explicit description for positroid varieties analogous to Hodge's original description based on semistandard Young tableaux. As we shall see, this approach yields several applications in algebra, combinatorics, and representation theory. More specifically:
\begin{itemize}
    \item We give an explicit Gr\"obner basis for positroid varieties with respect to the Hodge degeneration, analogous to the classical ``straightening relations'' for Hodge algebras. 
    \item We establish a connection between the \emph{promotion} (resp., \emph{evacuation}) operation on rectangular semistandard Young tableaux and  rotations  (resp., reflections) of positroid varieties.
    \item We give a formula for characters of \emph{cyclic Demazure modules}, using a recurrence on multigraded Hilbert series of positroid varieties. This resolves a problem of Lam's \cite{Lam19}.
\end{itemize}

\subsection{Hodge degeneration and standard monomials for positroid varieties}
We work throughout in the Pl\"ucker embedding of the Grassmannian $\gr(k,n)$.
For $\aa\in\binom{[n]}{k}$, let $[\aa]$ denote the corresponding Pl\"ucker coordinate, and set
\[
R(k,n):=\k[[\aa]:\aa\in\binom{[n]}{k}].
\]
Fix a degree reverse lexicographic term order $<_\omega$ on $R(k,n)$, in which the Pl\"ucker variables are ordered by a linear extension of the Pl\"ucker poset.
This term order defines the \emph{Hodge degeneration} of $\gr(k,n)$ and its subvarieties; see \Cref{section: initial-ideal}.
Under this degeneration, degree-$d$ standard monomials of $\gr(k,n)$ are naturally indexed by
$B(k,n,d)$, the set of rectangular semistandard Young tableaux of shape $k\times d$ with entries in $[n]$
(reviewed in \Cref{section: hodge-algebra-background}).

Positroid varieties in $\gr(k,n)$ admit several equivalent parametrizations.
Throughout the paper we index them by \emph{bounded affine permutations} $f$ of type $(k,n)$ and write
$\Pi_f\subseteq \gr(k,n)$ for the corresponding positroid variety; see \Cref{subsec:basic-positroid}.
Write $\ell(f)$ for the affine Coxeter length.
Every positroid variety admits a canonical irredundant presentation as an intersection of basic cyclic rank varieties
$\Pi_{S\le r}$, indexed by cyclic intervals $S$ (see \Cref{subsec:basic-positroid} and \eqref{eqn:basic-int}).

A \emph{basic positroid variety} $\Pi_{S\le r}\subseteq \gr(k,n)$ is defined by a single cyclic rank condition $S\le r$ where $S$ is a cyclic interval (Section~\ref{subsec:basic-positroid}). 

In \Cref{section: initial-ideal}, we prove an explicit combinatorial description for standard monomials of positroid varieties. In the wrapped-around case we apply a natural duality $(\cdot)^\vee$
that converts wrapped-around cyclic rank conditions into interval-type ones;
see Construction~\ref{const: complement-interval}.

\begin{thmA}[Theorem~\ref{thm:basic-std-mon} $+$ Theorem~\ref{thm:std-mon-intersect}]
    A monomial $\mm = \prod_{i = 1}^d [\aa^{(i)}]$ is a standard monomial for a positroid variety $\Pi_f$ if and only if $\mm$ is a standard monomial for $\gr(k,n)$ and 
    \begin{enumerate}
        \item when $\Pi_f = \Pi_{S\leq r}$ for some  interval $S$, $\mm$ does not contain a \textbf{generalized antidiagonal} for $S\leq r$ (Definition~\ref{def:antidiag});
        \item when $\Pi_f = \Pi_{S\leq r}$ for some wrapped-around interval $S$, $\mm^\vee$ does not contain a generalized antidiagonal for $S^\vee\leq r^\vee$ (Construction~\ref{const: complement-interval});
    \item when $\Pi_f=\bigcap_i \Pi_{S_i\le r_i}$ is a presentation as an intersection of basic positroid varieties,
$\mm$ is a standard monomial for all $\Pi_{S_i\le r_i}$\footnote{We note that there are possibly more than one way of writing $\Pi_f$ as an intersection of basic positroid varieties. One explicit construction is given in \Cref{def:essential-rank}. Nonetheless, the set of standard monomials for $\Pi_f$ is independent of the choice.}.
    \end{enumerate}
\end{thmA}

Using straightening relations and our characterizations for standard monomials, we obtain an explicit Gr\"obner basis for positroid varieties.

\begin{thmB}[Theorem~\Cref{thm: gb-basic-positroid} $+$ Proposition~\ref{prop:dual-gb} $+$ Theorem~\ref{thm:concat-gb}]\label{thmB}
Let $\cJ_f$ be the defining ideal for the positroid 
variety $\Pi_f$. $\cJ_f$ has a Gr\"{o}bner basis with respect to $<_\omega$ consisting of straightening relations (Definition~\ref{def:Pluckerrelations}), and
\begin{enumerate}[(1)]
    \item when $\Pi_f=\Pi_{S\le r}$ for some interval $S$, 
     $\sum_{w\in \symm_{r+1}}(-1)^{\ell(w)} w\cdot \mm$ where $\mm$ minimally contains a generalized antidiagonal for $S\leq r$ and $\symm_{r+1}$ (the symmetric group on $r+1$ letters) acts on $\mm$ by permuting entries in the generalized antidiagonal;

    \item when $\Pi_f=\Pi_{S\le r}$ for some wrapped-around interval $S$, $\sum_{w\in \symm_{r^\vee+1}} (-1)^{\ell(w)} (w\cdot \mm^\vee)^\vee$ where $\mm^\vee$ minimally contains a generalized antidiagonal for $S^\vee\leq r^\vee$; 

    \item when $\Pi_f = \bigcap_{i}\Pi_{S_i\leq r_i}$, the union of the Gr\"obner basis for each $\Pi_{S_i\le r_i}$.
\end{enumerate}
 
\end{thmB}

\begin{example}
    We illustrate the first case of Theorems A and B with a small example. Let $n=5$, $k=3$, $S=[2,4]$, $r=2$, and consider the interval positroid variety $\Pi_{[2,4]\le 2}$. The Gr\"obner basis of $\cJ_{[2,4]\le 2}$ consists of straightening relations for $\gr(3,5)$ and the following:
   
\[\ytableausetup{centertableaux, boxsize=1.1em}
\yshort{{\rr{2}},{\rr{3}},{\rr{4}}} \; , \hskip 2em
\yshort{1{\rr{2}},2{\rr{3}},{\rr{4}}5}-\yshort{1{\rr{2}},2{\rr{4}},{\rr{3}}5}\; ,\hskip 2em 
    \yshort{1{\rr{2}},3{\rr{3}},{\rr{4}}5}-\yshort{1{\rr{3}},{\rr{2}}{\rr{4}},35} \; , \hskip 2em
    \yshort{1{\rr{2}},{\rr{3}}4,{\rr{4}}5}-\yshort{1{\rr{3}},{\rr{2}}4,{\rr{4}}5} \; ,
    \]

 \[\ytableausetup{centertableaux}
\yshort{11{\rr{2}},2{\rr{3}}4,{\rr{4}}55}-\yshort{11{\rr{2}},2{\rr{4}}4,{\rr{3}}55}-\yshort{11{\rr{3}},2{\rr{2}}4,{\rr{4}}55}
 \; . \]
    In each term, the entries in the permuted generalized antidiagonal are highlighted. 
\end{example}

\subsection{Tableau symmetries and positroid varieties}
Our next application concerns \emph{promotion} and \emph{evacuation} introduced by Sch\"utzenberger \cites{evacuation,promotion}. 
Evacuation appears in the theory of canonical bases where the bijection by Berenstein-Zelevinsky \cite{BZ96} between canonical bases and semistandard tableaux sends Lusztig's involution to evacuation.
Stembridge \cites{Ste94,Ste96} identified a recurring \emph{$q=-1$ phenomenon}: in a number of representation-theoretic and tableau-enumeration settings, a
natural $q$-analogue $F(q)$ (typically a graded generating function) has the property that the specialization $F(-1)$ counts the fixed points of a distinguished involution.
Motivated by this perspective, Reiner--Stanton--White \cite{RSW04} introduced the \emph{cyclic sieving phenomenon}
for the enumeration of fixed points under cyclic group actions.
Rhoades \cite{rhoades2010cyclic} showed that promotion on $B(k,n,d)$ exhibits the cyclic sieving phenomenon, where the number of fixed tableaux after applying promotion $a$ times is the value of a polynomial at the  $n$th roots of unity $\xi_n^a$.
More recently, these operations are of interest in the study of dynamical algebraic combinatorics (see e.g., the survey
\cite{Roby} and the references therein). 
 
Let $\chi$ denote the cyclic rotation automorphism of $\gr(k,n)$ induced by the permutation $i\mapsto i+1\pmod n$ on coordinate vectors; it sends any positroid variety to a (generally different) positroid variety, denoted $\chi(\Pi_f)$.
Let $w_0\in S_n$ be the longest element (acting by reversing coordinates), and write $\Pi_{f^*}:=w_0\cdot \Pi_f$ for the reflected positroid variety; see \Cref{subsec:basic-positroid} for the corresponding action on the indexing data.

We relate promotion and evacuation to the cyclic and reflection symmetry of positroid varieties, respectively. 
 
\begin{thmC}[Theorem~\ref{thm:promotion}$+$Theorem~\ref{thm:evac}]

    Promotion (resp. evacuation) gives a bijection between the set of standard monomials of a positroid variety $\Pi_f$ and those of its cyclic shift $\chi(\Pi_f)$ (resp. its reflection $\Pi_{f^*}=w_0\cdot \Pi_f$).
\end{thmC}
The promotion statement in Theorem C allows us to identify standard monomials of a positroid variety with Lam's cyclic Demazure crystals \cite{Lam19}. This identification is a priori not apparent. It would be interesting to investigate a cyclic sieving/$q=-1$ phenomenon for standard monomials of positroid varieties.

\subsection{Cyclic Demazure modules and character formulas}
Motivated by the cyclic symmetry of the Grassmannian and its positroid varieties, Lam \cite{Lam19} associates to each bounded affine permutation $f\in\bound(k,n)$
and each integer $d\ge 0$ a finite-dimensional \emph{cyclic Demazure module} $V_f(d\omega_k)$ (see Definition~\ref{def:cyclicdemazure}).
These are naturally representations of the diagonal torus $T\subset GL_n$ (equivalently, objects in the representation ring $R(T)$), and may be realized as the
$T$-module of sections obtained by restricting $H^0(\gr(k,n),\mathcal{L}_{d\omega_k})$ to $\Pi_f$.
We write $\ch(\cdot)$ for the $T$-character in $\mathbb{Z}[\mathbf{t}^{\pm1}]$.

We establish a recurrence for the multigraded Hilbert series of positroid varieties (with respect to the standard $T$-action), which yields an inductive formula for $\ch(V_f(d\omega_k))$ and answers \cite{Lam19}*{Problem~24}.
Concretely, the recurrence reduces either the degree $d$ (via $V_f((d-1)\omega_k)$) or the dimension of the positroid variety (via terms indexed by $f'$ with larger length), so the character is determined by induction starting from $d=0$.

Write $\mathbf{t}=(t_1,\dots,t_n)$ for the character variables of $T$.
For $\aa\in\binom{[n]}{k}$ set $\mathbf{t}^{\aa}:=\prod_{i\in\aa} t_i$. Here $\topp(f)\in\binom{[n]}{k}$ corresponds to the smallest Pl\"ucker coordinate that does not vanish on $\Pi_f$, and $\Lenart_0(f)$ is the set of endpoints of saturated chains in Bruhat order. The precise definition is given in Section~\ref{subsec:hilbert-series-positroid}.

\begin{thmD}[Theorem~\ref{thm:character}]
For $f\in \bound(k,n)$, if $d=0$, then
\[
    \ch(V_f(d\omega_k)) = 1;
\]
otherwise,
\begin{equation}
\ch(V_f(d\omega_k))
=
\mathbf t^{\topp(f)}\left(
\ch(V_f((d-1)\omega_k))
+
\sum_{g\in \Lenart_0(f)\cap \mathrm{Bound}(k,n)}
(-1)^{\ell(g)-\ell(f)+1}\,
\ch(V_g(d\omega_k))
\right).
\end{equation}
\end{thmD}

Since the right-hand side only involves $(d-1)$ or bounded affine permutations of strictly larger length (hence positroid varieties of strictly smaller dimension),
Theorem~\ref{thm:character} computes $\ch(V_f(d\omega_k))$ by induction on $(d,\dim \Pi_f)$.
In particular, this recursion determines the coefficient of each monomial \(\mathbf t^\alpha\)
in \(\ch(V_f(d\omega_k))\) by a purely combinatorial rule, indexed by saturated \(0\)-Bruhat chains
starting at \(f\).

To establish the results in Section~\ref{sec:cyclic_demazure}, we prove a few auxiliary statements that may be of independent interest. In Proposition~\ref{prop:geometric-monk}, we give a geometric interpretation of Monk's rule in terms of matrix Schubert varieties. In the Appendix, we show that in an arbitrary Coxeter group, the least upper bounds of any set of elements in the same coset of a parabolic subgroup remain in the coset.

\vskip 1em

\noindent\textbf{Acknowledgments.}
We thank Allen Knutson, Thomas Lam, and David Speyer for inspiring conversations and helpful comments. We also thank Anders Buch, Sean Griffin, Matt Larson, Leonardo Mihalcea, Vic Reiner, Brendon Rhoades, Melissa Sherman-Bennett, Jessica Striker, Keller VandeBogert, Anna Weigandt and Alexander Yong for helpful conversations. DH would like to thank ICERM for the Combinatorial Algebraic Geometry Reunion Event, during which many fruitful conversations happened.
We also thank the anonymous referees for their helpful suggestions.
SG was supported by NSF Graduate Research Fellowship under grant No. DGE-1746047 and by an NSF RTG in Combinatorics DMS-1937241. DH was supported by NSF grant DMS-2202900.

\section{Pl\"ucker embeddings and Hodge algebra structure of Pl\"{u}cker coordinates}\label{section: hodge-algebra-background}

\subsection{Grassmannians and Pl\"ucker embeddings}

For $k\leq n\in \mathbb{Z}_{>0}$ and an algebraically closed field $\k$, the \textbf{Grassmannian} $\gr(k,n)$ is 
\[\gr(k,n) = \{W\subseteq \k^n: \dim(W) = k\}.\]
For $W\in \gr(k,n)$ and $\{w_1,\dots,w_k\}$ a choice basis of $W$, the \textbf{Pl\"ucker embedding} is the map
\begin{align*}
    \iota:\gr(k,n)&\rightarrow \mathbb{P}(\Lambda^k(\k^n))\\
    W &\mapsto [w_1\wedge \dots \wedge w_k].
\end{align*}

Set $R(k,n):= \k[[\aa]:\aa\in {[n]\choose k}]$ to be the homogeneous coordinate ring of the projective space $\mathbb{P}(\Lambda^k(\k^n))$. 
We extend the notation of $[\aa]$ to all sequences 
\[\aa = (a_1,\dots,a_k)\in [n]^k\]
where we use the convention that for any permutation $\sigma\in \symm_k$, 
\begin{equation}\label{eqn:extendedplucker}
    [a_{\sigma(1)},a_{\sigma(2)},\dots, a_{\sigma(k)}] = \sgn(\sigma)\cdot [a_1,\dots, a_k].
\end{equation}
In particular, this implies that $[\aa] = 0$ if $a_i = a_j$ for some $i\neq j\in [k]$.

\begin{conventions}
We write \([n]=\{1,\dots,n\}\) and \([j]=\{1,\dots,j\}\).
For \(\aa=(a_1,\dots,a_k)\in [n]^k\), the symbol \([\aa]\) denotes the corresponding
Pl\"ucker coordinate in \(R(k,n)\), interpreted using \eqref{eqn:extendedplucker}.
\end{conventions}

Define a partial order ``$\leq$" on ${[n]\choose k}$ by 
$$
[c_1,\dots, c_k] \leq [d_1,\dots, d_k] \text{ if and only if } c_i\leq d_i \text{ for all } i=1,\dots, k.
$$
We call this poset $\cP = ({[n]\choose k}, \leq)$ the \textbf{Pl\"{u}cker poset}.

The defining ideal $\cJ$ of $\gr(k,n)$ as a projective subvariety of $\mathbb{P}(\Lambda^k(\k^n))$ is generated by the following \textbf{straightening relations}:

\begin{definition}\cite{SturmfelsWhite}*{Section~2}\label{def:Pluckerrelations}
Fix \(s\in [k]\).
Let \(\alpha=(\alpha_1<\cdots<\alpha_{s-1})\in \binom{[n]}{s-1}\),
\(\beta=(\beta_1<\cdots<\beta_{k+1})\in \binom{[n]}{k+1}\),
and \(\gamma=(\gamma_1<\cdots<\gamma_{k-s})\in \binom{[n]}{k-s}\).
For a subset \(I=\{i_1<\cdots<i_s\}\subset [k+1]\), set
\(I^c=[k+1]\setminus I=\{i'_1<\cdots<i'_{k-s+1}\}\) and define
\[
\sgn(I):=\sum_{j=1}^s i_j-\binom{s+1}{2}.
\]
The \textbf{straightening relation} attached to \((\alpha,\beta,\gamma)\) is
\[
\sum_{I\in \binom{[k+1]}{s}}(-1)^{\sgn(I)}
[\alpha_1,\dots,\alpha_{s-1},\beta_{i'_1},\dots,\beta_{i'_{k-s+1}}]\cdot
[\beta_{i_1},\dots,\beta_{i_s},\gamma_1,\dots,\gamma_{k-s}].
\]
\end{definition}

The homogeneous coordinate ring of $\gr(k,n)$ is: 
\[\k[\gr(k,n)] = R(k,n)/\cJ.\]

\begin{example}
    The Grassmannian $\gr(2,4)$ is defined by one equation in $\mathbb{P}^5$:
    $$
\k[\gr(2,4)] \cong \frac{R(2,4)}{\cJ} = \frac{\k[[1,2],[1,3],[1,4],[2,3],[2,4],[3,4]]}{\langle [1,4][2,3] - [1,3][2,4] + [1,2][3,4]\rangle}.
$$
\end{example}

We will often\footnote{In some arguments we allow columns to be permuted; in that case, columns may be reordered into increasing order at the cost of a sign, as in \eqref{eqn:extendedplucker}.} represent a monomial
\[
\mm=\prod_{i=1}^d [\alpha^{(i)}], \qquad \alpha^{(i)}=(a^{(i)}_1,\dots,a^{(i)}_k)\in [n]^k,
\]
as a \(k\times d\) tableau whose \(i\)th column consists of the entries \(a^{(i)}_1,\dots,a^{(i)}_k\).
Thus every entry of \(\alpha^{(i)}\) appears exactly once in column \(i\).
By convention, we arrange each column in strictly increasing order; if \(\alpha^{(i)}\) is not initially increasing, then sorting the column changes the sign of \([\alpha^{(i)}]\) according to \eqref{eqn:extendedplucker}.

\begin{example}
The straightening relation generating the defining ideal for $\gr(2,4)$ could be represented  as
\begin{equation}\label{eq: plucker-gr-24}
\yshort{1{\rr{2}}, {\rr{4}}{\rr{3}}} \; - \; 
\yshort{1{\rr{2}}, {\rr{3}}{\rr{4}}} \; + \; 
\yshort{1{\rr{3}}, {\rr{2}}{\rr{4}}}
\end{equation}
where in this case, $\alpha = \{1\}$, \textcolor{red}{$\beta = \{2,3,4\}$}, and $\gamma = \emptyset$.
Another example of a straightening relation in $\gr(4,8)$ would arise from the product $[1,2,6,7]\cdot [3,4,5,8]$. Here $\alpha = \{1,2\}$, \textcolor{red}{$\beta = \{3,4,5,6,7\}$}, and $\gamma = \{8\}$.
$$
\yshort{1{\rr{3}}, 2{\rr{4}}, {\rr{6}}{\rr{5}},{\rr{7}}8} \; - \; \yshort{1{\rr{3}}, 2{\rr{4}}, {\rr{5}}{\rr{6}},{\rr{7}}8} \; + \; \yshort{1{\rr{3}}, 2{\rr{4}}, {\rr{5}}{\rr{7}},{\rr{6}}8} \; - \; \yshort{1{\rr{3}}, 2{\rr{5}}, {\rr{4}}{\rr{7}},{\rr{6}}8} \; \cdots 
$$
\end{example}

\begin{definition} A tableau is called \textbf{semistandard} if it is strictly increasing along columns and weakly increasing along rows.
\end{definition}

The coordinate ring $\k[\gr(k,n)]$ is the direct sum of its graded pieces:
\[
\k[\gr(k,n)] = \bigoplus_{d = 0}^\infty \k[\gr(k,n)]_d, 
\]
where each $\k[\gr(k,n)]_d$ is a finite dimensional $\k$-vector space spanned by the monomials $\{\prod_{i = 1}^d [\aa^{(i)}]: \aa^{(i)}\in {[n]\choose k}\text{ for all }i\}$.
The following classical theorem is originally due to Hodge \cite{Hodge} (c.f.\cite{bruns1998cohen}*{Lemma 7.2.3}).

\begin{theorem}\label{thm:hodge}
Fix \(d\ge 0\). Identify a degree-\(d\) monomial
\(\prod_{i=1}^d[\alpha^{(i)}]\in \k[\gr(k,n)]_d\) with the \(k\times d\) tableau
whose \(i\)th column is \(\alpha^{(i)}\) (written in increasing order, using
\eqref{eqn:extendedplucker} to account for signs).
Then the monomials corresponding to semistandard tableaux form a \(\k\)-basis of
\(\k[\gr(k,n)]_d\).
\end{theorem}

These monomials are often referred to as the \textbf{standard monomials} for $\gr(k,n)$. The precise definition of standard monomials for subvarieties of $\gr(k,n)$ will be given in Definition~\ref{def:std-mon}.

\subsection{Bruhat decomposition of \texorpdfstring{$\gr(k,n)$}{Gr(k,n)}}

Fix $\{e_1,\dots,e_n\}$ to be a basis for $\k^n$. Set $E_\bullet = 0\subsetneq E_1 \subsetneq \dots \subsetneq E_{n} = \k^n$ to be the standard flag where $E_i = \langle e_1,\dots,e_i\rangle$. For any subspace $E_i$, define $\mathrm{pr}_{E_i}:\k^n\twoheadrightarrow E_i$ to be the  projection onto the coordinates in $E_i$. 

Following \cite{KLS13juggling}*{Section 4.2}, for $I = \{i_1<\dots<i_k\}\in {[n]\choose k}$, define the Schubert cell to be
\begin{equation}\label{eqn:Schubertcell}
    X_I^\circ:= \left\{V\in \gr(k,n): \dim(\mathrm{pr}_{E_j}(V)) = \abs{I\cap [j]}\text{ for all }j\in [n]\right\}.
\end{equation}
The Bruhat decomposition of $\gr(k,n)$ is 
\begin{equation}\label{eqn:Bruhat}
    \gr(k,n) = \bigsqcup_{I\in {[n]\choose k}}X_I^\circ.
\end{equation}

Define the Schubert variety 
\begin{equation}\label{eqn:Schubertvariety}
    X_I:= \left\{V\in \gr(k,n): \dim(\mathrm{pr}_{E_j}(V)) \leq \abs{I\cap [j]}\text{ for all }j\in [n]\right\}.
\end{equation}
This is the closure of $X_I^\circ$ and has a decomposition into Schubert cells:
\[X_I = \bigsqcup_{I\leq J} X_J^\circ.\]
  Note that $\gr(k,n) = X_{\{1,2,\cdots, k\}}$. 

The defining ideal of $X_I$ is (see \cite{S07}*{Corollary 1.3.13})
\[\cJ_{X_I} = \cJ + \langle [\aa]: I\nleq \aa \rangle. \]
Let $\k[X_I] = R(k,n)/\cJ_{X_I}$. Similar to the case of $\gr(k,n)$, Hodge obtained a description of standard monomials for Schubert varieties:

\begin{theorem}[See \cite{lakshmibai2008standard} (Lemma~4.3.4.1)]
Write \(I=\{i_1<\cdots<i_k\}\in \binom{[n]}{k}\).
The monomials \(\prod_{t=1}^d[\alpha^{(t)}]\) that correspond to semistandard
\(k\times d\) tableaux whose \(j\)th row entries are all \(\ge i_j\) (for each
\(1\le j\le k\)) form a basis of \(\k[X_I]_d\).
\end{theorem}

Let $\prec$ be any linear extension of $\mathcal{P}$
(e.g., the lexicographic order on $\binom{[n]}{k}$) and let $<_\omega$ be the induced degree reverse lexicographic monomial order on $R(k,n)$. 

\begin{definition}\label{def:std-mon}
Fix a linear extension \(\prec\) of the Pl\"ucker poset \(\mathcal P\) and let \(<_\omega\)
be the induced degree reverse lexicographic term order on \(R(k,n)\).
For a subvariety \(X\subseteq \gr(k,n)\), let \(\cJ_X\subseteq R(k,n)\) be its defining ideal
under the Pl\"ucker embedding.
A monomial \(\mm=\prod_{i=1}^d[\aa^{(i)}]\) is a \textbf{standard monomial} for \(X\)
(with respect to \(<_\omega\)) if \(\mm\notin \inn_\omega(\cJ_X)\).
\end{definition}

The monomials in Theorem~\ref{thm:hodge} are standard monomials in the sense of \Cref{def:std-mon} by the following (see also {\cite{SturmfelsAlgo}*{Theorem~3.1.7}):

\begin{theorem}\cite{SturmfelsWhite}\label{thm:grgrobner}
Let \(\mathcal P=(\binom{[n]}{k},\le)\) be the Pl\"ucker poset (coordinatewise order),
and fix the term order \(<_\omega\) induced by a linear extension \(\prec\) of \(\mathcal P\).
Then the straightening relations in Definition~\ref{def:Pluckerrelations} form a
Gr\"obner basis of \(\cJ\) with respect to \(<_\omega\). Moreover,
\[
\inn_\omega(\cJ)=\langle [\aa][\bb] : \aa,\bb\in \binom{[n]}{k}
\text{ are incomparable in }\mathcal P \rangle.
\]
\end{theorem}

Therefore, with respect to the ordering $<_\omega$, standard monomials for $\gr(k,n)$ correspond to \textbf{multichains} $\aa_{i_1} \leq \aa_{i_2} \leq \dots \leq \aa_{i_d}$ in the poset $\cP$, and standard monomials for $X_I$ correspond to multichains in the upper order ideal $\{\aa\in \cP: \aa\geq I\}$. 
The Gr\"{o}bner degeneration from \Cref{thm:grgrobner} is often referred to as a \textbf{Hodge degeneration}.
\begin{remark}
    When $s = 1$ in \Cref{def:Pluckerrelations}, the straightening relations specialize to 
    a set of relations which also generate the ideal $\cJ$, but in general do not form a Gr\"obner basis \cite{SturmfelsAlgo}*{Chapter~3.1 Exercise (3)}.
\end{remark}

Using the straightening relations described in the previous subsection, one can show that the Pl\"{u}cker algebra is a \emph{Hodge algebra} or an \emph{algebra with straightening law (ASL)} associated to a poset $H$.  
We refer the reader to \cite{bruns2022determinants}*{Section 6.2} for a more in-depth discussion.

\section{Positroid Varieties}
\label{section: positroid-background}
A comprehensive introduction of positroid varieties can be found in \cite{speyer2023richardson}. We recommend that readers unfamiliar with positroid varieties refer to it as a supplement.

Let $\k$ be an algebraically closed field of characteristic 0. 
In this section, we give four equivalent definitions of  positroid varieties using
\begin{itemize}
    \item bounded affine permutations (Definition~\ref{def:positroid-baf}), 
    \item cyclic rank matrices (Definition~\ref{def:essential-rank}), 
    \item projection of Richardson varieties (Definition~\ref{def:projrichardson}), and 
    \item cyclic intersection of Schubert varieties (Proposition~\ref{prop:positroidcyclicschub}).
\end{itemize}
The equivalence of these definitions is Theorem~5.1 of \cite{KLS13juggling}.

\subsection{Bounded affine permutations} Let $\widetilde{\mathcal{S}}_n$ be the group of bijections $f:\ZZ \rightarrow \ZZ$ such that 
\[f(i+n) = f(i)+n,\]
where the group operation is composition. We refer to these bijections as \textbf{affine permutations}.
We will sometimes write an affine permutation as $f = [ f(1)\ f(2) \cdots f(n) ]$. Define
\[\widetilde{S}_n^m = \{f\in \widetilde{\mathcal{S}}_n: \sum_{i = 1}^n (f(i)-i) = mn\}.\]
In particular, $W_{\widehat{A}_{n-1}}:=\widetilde{S}_n^0$ is the affine Coxeter group of type $\widehat{A}_{n-1}$.

We  recall some facts about affine Bruhat order on $W_{\widehat{A}_{n-1}}$, following \cite{BB2005}. For $a<b\in\ZZ$ such that $a \not\equiv b \pmod n$, a \textbf{reflection}
$t_{a,b}$ is defined as 
\[t_{a,b}:=\prod_{r\in \ZZ}(a+rn, b+rn). \]

For $f,g\in W_{\widehat{A}_{n-1}}$, we say $f\gtrdot g$ if there exist $a,b\in \ZZ$ such that $a<b$,  $g(a)<g(b)$, $f=gt_{a,b}$, and for all $a<i<b$, we have $g(i)<g(a)$ or $g(i)>g(b)$. The \textbf{Bruhat order} is obtained by taking the transitive closure of the covering relation $\gtrdot$. The Bruhat order on $W_{\widehat{A}_{n-1}}=\widetilde{S}_n^0$ induces a partial order on $\widetilde{S}_n^m$ for each $m\in \ZZ$. For $k=0,\dots,n-1$, define the $k$-Bruhat order to be the transitive closure of the covering relation $\gtrdot_k $, where $f\gtrdot_k g$ is defined as $f\gtrdot g$ and $f=gt_{a,b}$ where $a\le k' < b$ for some $k'\equiv k \pmod n$. (We won't need this $k$-Bruhat order until Section~\ref{sec:cyclic_demazure}).

To compute the length of $f\in \widetilde{S}_n$, we have the formula
    \[
    \ell(f)=\mathrm{inv}(f(1),\ldots , f(n))+\sum_{1\leq m < l \leq n} \left\lfloor \frac{|f(m)-f(l)|}{n} \right\rfloor,
    \]
    where $\mathrm{inv}(f(1),\ldots , f(n))=|\{ (m,l) \;|\; 1\leq m < l \leq n, f(m)>f(l) \}|$ is the number of ordinary inversions.
    
Define the set of \textbf{bounded affine permutations} to be  
\begin{equation}\label{eqn:bounded}
        \bound(k,n) := \{f\in \widetilde{S}_n^k: i\leq f(i)\leq  i+n \text{ for all $i\in \ZZ$}
        \}.
\end{equation}
The partial order on $\widetilde{S}_n^k$ then restricts to a partial order on $\bound(k,n)$.

Given \(V\in \gr(k,n)\), fix a choice of \(k\times n\) matrix \(\widetilde{V}\) whose row span equals \(V\).
Let \(v_1,\dots,v_n\) denote the column vectors of \(\widetilde{V}\), and extend this sequence periodically by setting
\[
v_i := v_{i+n} \qquad \text{for all } i\in \mathbb{Z}.
\]
For integers \(i\le j\), let \(\widetilde{V}_{[i,j]}\) denote the matrix with column vectors \(v_i,\dots,v_j\).
For integers $i\le j$ with $j\le i+n$, define the cyclic interval
\[
[i,j]^\circ := \{i,i+1,\dots,j\}\subset \mathbb Z,
\]
and let $\widetilde V_{[i,j]^\circ}$ be the $k\times |[i,j]^\circ|$ submatrix of $\widetilde V$ whose columns are $v_i,v_{i+1},\dots,v_j$ (using the periodic extension above).
Set
\[
\rank(\widetilde V_{[i,j]^\circ}) := \dim \Span(v_i,v_{i+1},\dots,v_j).
\]

Associated to \(\widetilde{V}\) is the affine permutation
\begin{equation}\label{eqn:f_V}
f_{\widetilde{V}}:\mathbb{Z}\to\mathbb{Z}, \qquad
f_{\widetilde{V}}(i) := \min\{\, j\ge i \mid v_i \in \Span(v_{i+1},\dots,v_j) \,\}.
\end{equation}
This affine permutation lies in \(\bound(k,n)\), and every bounded affine permutation arises in this way.
Moreover, \(f_{\widetilde{V}}\) depends only on the row span \(V=\mathrm{rowspan}(\widetilde{V})\), and not on the chosen representative \(\widetilde{V}\).
We therefore denote this affine permutation by \(f_V\).

\begin{definition}\label{def:positroid-baf}
The \textbf{open positroid variety }associated to a bounded affine permutation $f$ is
\[\Pi_{f}^\circ \coloneqq \{V\in \gr(k,n):f_V = f\},\]
and the \textbf{positroid variety} is its Zariski closure
$\Pi_f = \overline{\Pi_f^{\circ}}$. 
\end{definition}
In fact, we have
\[\Pi_f = \bigsqcup_{f'\geq f}\Pi_{f'}^\circ.\]
Define $\cJ_f\subset R(k,n)$ to be the defining ideal of $\Pi_f$ under the Pl\"ucker embedding.

\subsection{Cyclic rank matrices}
A second way to define positroid varieties is through \textbf{cyclic rank matrices} \cite{KLS13juggling}*{Corollary 3.12}.

\begin{conventions}
Given $f\in \bound(k,n)$, write $f$ as the $\infty\times\infty$ permutation matrix with $1$'s in positions $(i,f(i))$ and $0$'s elsewhere.
For integers \(i\in\mathbb Z\) and \(j\) with \(i\le j\le i+n\), we write
\([i,j]^\circ=\{i,i+1,\dots,j\}\subset\mathbb Z\).
When drawing the \(\infty\times\infty\) permutation matrix of an affine permutation,
rows are indexed by \(i\in\mathbb Z\) (increasing downward) and columns by \(j\in\mathbb Z\)
(increasing to the right). “Weakly southwest of \((i,j)\)” means row index \(\ge i\)
and column index \(\le j\).
\end{conventions}

Define the \textbf{cyclic rank matrix} $r(f)$ by
\[
r(f)_{i,j}
=
|[i,j]^\circ|
-
\#\{\text{1s in the matrix of }f\text{ weakly southwest of }(i,j)\},
\qquad (i\in \mathbb Z,\ i\le j\le i+n).
\]
If $V\in \gr(k,n)$ satisfies $f_V=f$, then
\[
r(f)_{i,j}=\rank(\widetilde V_{[i,j]^\circ})=\dim\Span(v_i,\dots,v_j)
\qquad \text{for all } i\le j\le i+n.
\]

A \textbf{cyclic rank condition} on $U\in \gr(k,n)$ is an inequality of the form
\[
\rank(\widetilde U_{[i,j]^\circ}) \le r
\qquad (i\in\mathbb Z,\ i\le j\le i+n,\ r\in \mathbb Z).
\]

\noindent
The open positroid variety associated to $f$ can be described by equalities of cyclic ranks:
\[
\Pi_f^\circ
=
\{V\in \gr(k,n): \rank(\widetilde V_{[i,j]^\circ})= r(f)_{i,j}
\text{ for all }i\in\mathbb Z,\ j\in[i,i+n]\},
\]
and $\Pi_f$ is obtained by replacing ``$=$'' with ``$\le$''.

\noindent
The \emph{essential set} \(ess(f)\) is the finite set of “corner positions” in the affine permutation matrix
at which the cyclic rank inequalities are not implied by neighboring inequalities. Formally,
define the \textbf{essential set of \(f\)} by
\[
ess(f):=\Bigl\{(i,j)\ \Big|\ 
\begin{array}{l}
i\in\mathbb Z,\ i\le j\le i+n,\\[2pt]
f(i-1)>j,\quad f^{-1}(j+1)<i,\\
f(i)\le j,\quad f^{-1}(j)\ge i
\end{array}
\Bigr\}.
\]
A diagrammatic description and illustration for $ess(f)$ is given in \cite{K14interval}*{Section 2.1}. We omit this description, but  encourage the reader to refer to \cite{K14interval}*{Section 2.1}. This is analogous to Fulton's essential set condition.

\begin{definition}\label{def:essential-rank}
    The positroid variety is 
    \begin{equation}\label{eqn:essential-rank}
    \Pi_f=\{U\in \gr(k,n): \rank(\widetilde{U}_{[i,j]^\circ})\leq r(f)_{i,j}\text{ for all }(i,j)\in ess(f)\}.
\end{equation}
\end{definition}

The following statement, which follows from \cite{KLS2014projections}*{Theorem 5.1} is crucial for our main results.

\begin{prop}\label{prop:basic-intersection}
Let \(f\in\bound(k,n)\) and let \(ess(f)\) be as above.
Then the ideal defining \(\Pi_f\subset\gr(k,n)\) is generated scheme-theoretically by
the minors of the \(k\times |[i,j]^\circ|\) submatrices \(\widetilde U_{[i,j]^\circ}\)
which express the finitely many inequalities
\[
\rank(\widetilde U_{[i,j]^\circ})\le r(f)_{i,j}\qquad\text{for }(i,j)\in ess(f).
\]
Equivalently, \(\Pi_f\) is the scheme-theoretic intersection of the closed subschemes cut out by
these essential cyclic rank conditions.
\end{prop}

\begin{example}
\label{ex:positroid-decompose}
    Set $k=3$, $n=6$. Let $f$ be the bounded affine permutation \[[5,2,4,9,7,12].\] Then 
    \[\Pi_f=\{U\in \gr(3,6): \rank(\widetilde{U}_{[2,2]})\le 0,\rank(\widetilde{U}_{[2,4]})\le 1,\rank(\widetilde{U}_{[1,5]})\le 2,\rank(\widetilde{U}_{[5,2]^\circ})\le 2\}.\]
\end{example}
\subsection{\texorpdfstring{$k$}{k}-Bruhat orders and Grassmann intervals}

For permutations $u,v\in \symm_n$, we say $u$ \textbf{$k$-covers} $v$, denoted $u\gtrdot_k v$, if $u\gtrdot v$ in strong Bruhat order and $\{u(1),\cdots, u(k)\}\neq \{v(1),\cdots, v(k)\}$. The \textbf{$k$-Bruhat order} is the partial order on $\symm_n$ generated by taking the transitive closure of the $k$-covering relation $\gtrdot_k$. Let $[v,u]_k\subset \symm_n$ be the $k$-Bruhat interval. This is a graded poset of rank $\ell(u)-\ell(v)$, where $\ell(u)$ denotes the Coxeter length of the permutation $u$.

Consider the equivalence relation on the set of $k$-Bruhat intervals where
$[v,u]_k \sim [v',u']_k$
if there exists $z\in \symm_k\times \symm_{n-k}$ (where $\symm_k\times \symm_{n-k}$ is identified as a subgroup of $\symm_n$) such that $v' = vz, u' = uz$ where $\ell(v') = \ell(v)+\ell(z)$ and $\ell(u') = \ell(u)+\ell(z)$. Let $\mathcal{Q}(k,n)$ be the equivalence classes of $k$-Bruhat intervals.

Let $G = \GL_n(\k)$ and $B,B_-$ be the Borel and opposite Borel subgroup of $G$ consisting of invertible upper and lower triangular matrices, respectively. 
Upon choosing a basis of $\k^n$, a point $gB\in G/B$ can be identified with a complete flag $$F_{\bullet} = 0\subsetneq F_1 \subsetneq F_2 \subsetneq  \cdots \subsetneq F_{n-1}\subsetneq F_n  = \k^n,$$ where each $F_i$ is the span of the first $i$ columns of any representative of $gB$.

For $w\in\symm_n$, define the \textbf{Schubert variety} $X_w$ and the \textbf{opposite Schubert variety} $X^w$ to be the Zariski closure of the \textbf{Schubert cell} $X_w^\circ := B_-wB/B$ and \textbf{opposite Schubert cell} $X^w_\circ := BwB/B$ respectively. Define the \textbf{Richardson variety} $X_v^u$ to be the intersection $X^u\cap X_v$. In particular, $X_v^u$ is non-empty if and only if $v\leq u$.

Let $\pi: \mathrm{Fl}(n) \rightarrow \gr(k,n)$ be the natural projection where
$\pi(F_{\bullet}) = F_k$. It follows from \cite{KLS2014projections}*{Proposition~3.3} that  If $[v,u]_k\sim [v',u']_k$, then $\pi(X_v^u) = \pi(X_{v'}^{u'})$. This allows the following definition.

\begin{definition}\label{def:projrichardson}
Let \(u,v\in\symm_n\) with \(u\ge_k v\) in \(k\)-Bruhat order.
Let \(X_v^u=X^u\cap X_v\subset \mathrm{Fl}(n)\) be the Richardson variety and the projection
\(\pi:\mathrm{Fl}(n)\to \gr(k,n)\).
Define the associated positroid variety by
\[
\Pi_v^u:=\pi(X_v^u).
\]
This depends only on the equivalence class of the \(k\)-Bruhat interval \([v,u]_k\in\mathcal Q(k,n)\).
\end{definition}

For a permutation $w\in \symm_n$, define the descent set of $w$ to be 
\[\mathrm{Des}(w):= \{i\in [n-1]:w(i)>w(i+1)\}.\]
We say a permutation $w$ is $k$-Grassmannian for some $k<n$ if $\mathrm{Des}(w)\subseteq \{k\}$.

\begin{lem-def}[\cite{KLS13juggling}, Proposition~2.3]
     For every equivalence class in $\mathcal{Q}(k,n)$, there is a unique representative $[v,u]_k$ such that $u$ is a $k$-Grassmannian permutation. We will call such an interval a \textbf{Grassmann interval}.
\end{lem-def}

Since $\Pi_v^u$ is independent of the choice of representative in each equivalence class in $\mathcal{Q}(k,n)$, we will assume $[v,u]_k$ is the Grassmann interval unless specified otherwise.

For $I = \{i_1<i_2<\cdots<i_k\}\subset [n]$, write $I^{\vee} = [n]\setminus I = \{i^\vee_1<\cdots<i^\vee_{n-k}\}$. Define the Grassmannian permutation $w_I$ as follows: 
\[w_I(j) = \begin{cases}
    i_j & \text{ if }j\leq k\\
    i^{\vee}_{j-k} & \text{ otherwise}
\end{cases}.\]

\begin{lemma}[\cite{KLS13juggling}, Proposition~3.15]\label{lemma:wtouv}
Let \(f\in \bound(k,n)\), written in window notation \(f=[f(1)\ \cdots\ f(n)]\).
Let \(I=\{i\in[n]: f(i)>n\}\), and let \(\widehat f\in\symm_n\) be defined by
\(\widehat f(i)\equiv f(i)\pmod n\) for \(1\le i\le n\).
Let \(w_I\in\symm_n\) be the Grassmannian permutation associated to \(I\) defined above.
Then \(\Pi_f=\Pi_{v(f)}^{u(f)}\) where \(u(f)=w_I\) and \(v(f)=\widehat f\,u(f)\).
In particular, \(u(f)\) is \(k\)-Grassmannian.
\end{lemma}

\begin{example}
    Take $f=[5,2,4,9,7,12]$ as in Example~\ref{ex:positroid-decompose}. Then $\Pi_f=\Pi_{v(f)}^{u(f)}$ where $u(f)=456123$ and $v(f)=316524$.
\end{example}

The action of the longest element $w_0\in \symm_n$ on $\gr(k,n)$ is an involution on the positroid stratification. Specifically, for $w\in \symm_n$, $w_0X^w=w_0\cdot\overline{BwB/B}=w_0\cdot\overline{w_0B_-w_0w/B}=\overline{B_-w_0w/B}=X_{w_0w}$, and similarly
$w_0X_w=X^{w_0w}$. Therefore, for $v<u\in \symm_n$, we have $w_0X_v^u=X^{w_0v}_{w_0u}$. Projecting to $\gr(k,n)$, we have $w_0\Pi_v^u=\Pi_{w_0u}^{w_0v}$. 
For $f\in \bound(k,n)$, define $f^*\in \bound(k,n)$ such that $\Pi_{f^*}=\Pi^{w_0v(f)}_{w_0u(f)}$.
Equivalently, if \(f=[f(1)\ \cdots\ f(n)]\) is written in window notation, then
\(f^*\) is the unique bounded affine permutation whose juggling state is obtained
by reversing the cyclic order and replacing each entry \(i\) by \(n+1-i\).
We will connect this action of $w_0$ to evacuation in Section~\ref{subsec:evacuation}. 
\subsection{Cyclic rotations of Bruhat decomposition}

Define the cyclic rotation map
\begin{align}\label{eqn:defchi}
\begin{split}
    \chi:\k^n&\longrightarrow \k^n\\
    e_i &\mapsto e_{i+1}
\end{split}
\end{align}
where the indices are taken mod $n$. We abuse the notation and denote $\chi:\gr(k,n)\rightarrow \gr(k,n)$ the cyclic rotation on $\gr(k,n)$ induced by the map $\chi$ as in \eqref{eqn:defchi}. This is to say, for $V\in \gr(k,n)$ with a representative
   \[\widetilde{V} = 
\begin{bmatrix}
    \vert & \vert & \  & \vert \\
    v_1   & v_2 & \cdots & v_n   \\
    \vert & \vert & \  & \vert
\end{bmatrix},\] 
we have
\begin{equation}\label{eqn:chi}
    \chi(\widetilde{V}) := 
    \begin{bmatrix}
        \vert & \vert & \  & \vert \\
        v_n   & v_1 & \cdots & v_{n-1}   \\
        \vert & \vert & \  & \vert
    \end{bmatrix}\text{ and } \chi(V):=\mathrm{rowspan}(\chi(\widetilde{V})).
    \end{equation}

Similar to \eqref{eqn:Bruhat}, for every $0\le j\le n-1$, we have the rotated Bruhat decomposition:
\begin{equation}
    \gr(k,n) = \bigsqcup_{I\in {[n]\choose k}} \chi^j(X_I^\circ).
\end{equation}

The positroid stratification is the common refinement of $n$ cyclically rotated Bruhat decompositions:
\begin{equation}\label{eqn:positroidstrat}
    \gr(k,n) = \bigsqcup_{(I^{(0)},\dots,I^{(n-1)})\in {[n]\choose k}^n} \bigcap_{j = 0}^{n-1} \chi^j X^\circ_{I^{(j)}}. 
\end{equation}
The strata that are not empty are indexed by the $n$-tuples $\mathbf{I} = (I^{(0)},\dots,I^{(n-1)})$ such that for every $j=0,\cdots,n-1$,
\begin{equation}\label{eqn:jugglingstate}
    I^{(j+1)} \supseteq (I^{(j)}\setminus \{1\})-1,
\end{equation}
where the ``$-1$" means subtract $1$ from every element in the set $I^{(j)}\setminus \{1\}$. We use the convention that $I^{(n)} := I^{(0)}$. 
The tuples satisfying \eqref{eqn:jugglingstate} are also known as \textbf{juggling states} (see \cite{KLS13juggling}*{Section 3.3}).

\begin{definition}\label{def:jugglingstate}
    For any juggling state $\mathbf{I}$, the open positroid variety is
    \[\Pi_{\mathbf{I}}^\circ = \bigcap_{j = 0}^{n-1} \chi^j X^\circ_{I^{(j)}},\]
and the positroid variety is $\Pi_{\mathbf{I}} = \overline{\Pi_{\mathbf{I}}^\circ}$. 
\end{definition}
In fact, each $\Pi_{\mathbf{I}}$ is the intersection of $n$ cyclically rotated Schubert varieties:
\begin{prop}[\cite{KLS13juggling}*{Theorem~5.10}]\label{prop:positroidcyclicschub}
    $\Pi_{\mathbf{I}} = \bigcap_{j = 0}^{n-1} \chi^j X_{I^{(j)}}$.
\end{prop}

We  define the  cyclic shift on $\bound(k,n)$ as
\[\chi(f)(i) = f(i-1)+1.\]

For $f\in\bound(k,n)$, we can compute the corresponding juggling state $\mathbf{I}(f)=(I^{(0)},\cdots,I^{(n-1)})$ where $I^{(j)}:=v(\chi^{-j}(f))([k])$. Then $\Pi_f=\Pi_\mathbf{I}$.

\begin{example}
    Let $f=[5,2,4,9,7,12]$ as in Example~\ref{ex:positroid-decompose}. Then 
    \[\mathbf{I}(f)=(\{1,3,6\},\{2,4,5\},\{1,3,4\},\{1,2,3\},\{1,2,5\},\{1,2,4\}).\]
\end{example}

\subsection{Basic positroid varieties}
\label{subsec:basic-positroid}

For any $S\subset [n]$ and any $r\in \mathbb{N}$, define
\[\Pi_{S\leq r} = \{M\in \gr(k,n): \rank(M_{S}) \leq r\},\]
where $\rank(M_S)$ is the rank of the submatrix of $M$ with column index $S$.

\begin{lemma}\label{lemma:chi}
    For $f\in \bound(k,n)$, $\chi(\Pi_{f}) = \Pi_{\chi(f)}$.
\end{lemma}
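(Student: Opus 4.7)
The plan is to reduce the identity $\chi(\Pi_f) = \Pi_{\chi(f)}$ to the corresponding identity for open positroid varieties and then pass to Zariski closures. The heart of the argument is the compatibility of the two notions of cyclic shift (on $\gr(k,n)$ and on $\mathrm{Bound}(k,n)$) through the map $V \mapsto f_V$ of \eqref{eqn:f_V}.

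First, I would verify that $\chi(f) \in \mathrm{Bound}(k,n)$ whenever $f \in \mathrm{Bound}(k,n)$, so the right-hand side makes sense. Periodicity is immediate from $\chi(f)(i+n) = f(i+n-1)+1 = f(i-1)+n+1 = \chi(f)(i)+n$, the sum condition $\sum_{i=1}^n (\chi(f)(i)-i) = kn$ follows from a reindexing using periodicity of $f(j)-j$, and the bounds $i \leq \chi(f)(i) \leq i+n$ translate directly into the bounds for $f$ at $i-1$.

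The main step is the claim: for any $V \in \gr(k,n)$, $f_{\chi(V)} = \chi(f_V)$. This is a bookkeeping exercise with the definition \eqref{eqn:f_V}. Writing $w_i$ for the $i$-th column of $\chi(\widetilde{V})$, one has $w_i = v_{i-1}$ under the cyclic extension $v_i = v_{i+n}$. Then
\[
f_{\chi(V)}(i) = \min\{j \geq i : w_i \in \Span(w_{i+1},\dots,w_j)\} = \min\{j \geq i : v_{i-1} \in \Span(v_i,\dots,v_{j-1})\},
\]
and substituting $j = j'+1$ shows the right-hand side equals $f_V(i-1)+1 = \chi(f)(i)$, as required. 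Consequently $V \in \Pi_f^\circ$ iff $\chi(V) \in \Pi_{\chi(f)}^\circ$, i.e., $\chi(\Pi_f^\circ) = \Pi_{\chi(f)}^\circ$.

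To finish, note that $\chi$ is a regular automorphism of $\gr(k,n)$ (it is induced by a permutation of coordinates on the ambient affine space, hence a linear isomorphism, which descends to an isomorphism of the Grassmannian under the Pl\"ucker embedding). Regular automorphisms send Zariski closures to Zariski closures, so
\[
\chi(\Pi_f) = \chi\bigl(\overline{\Pi_f^\circ}\bigr) = \overline{\chi(\Pi_f^\circ)} = \overline{\Pi_{\chi(f)}^\circ} = \Pi_{\chi(f)}.
\]
The only mildly delicate point is the off-by-one in the cyclic index conventions: one must be consistent about whether columns shift left or right, and whether $f$ shifts input or output. Beyond that the proof is essentially a careful check of definitions.
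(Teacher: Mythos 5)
Your proposal is correct, and its main step---the compatibility $f_{\chi(V)}=\chi(f_V)$, verified by the index substitution $w_i=v_{i-1}$---is exactly the heart of the paper's proof as well. The two arguments diverge only in how they pass from the open strata to their closures: the paper writes $\Pi_f=\bigsqcup_{f'\ge f}\Pi_{f'}^\circ$ and invokes the (unproven there) fact that $\chi$ preserves the partial order on $\mathrm{Bound}(k,n)$, applying the identity on open cells stratum by stratum; you instead observe that $\chi$ is a regular automorphism of $\gr(k,n)$ and therefore commutes with Zariski closure. Your route is slightly more economical, since it sidesteps the order-preservation claim entirely, while the paper's route has the side benefit of making explicit how $\chi$ acts on the whole stratification $\Pi_f=\bigsqcup_{f'\ge f}\Pi_{f'}^\circ$, which is in the spirit of how the lemma is used later. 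Either way the argument is complete.
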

\begin{proof}
    Pick any $V\in \Pi_f^{\circ}$, we have $f = f_V$ as in \eqref{eqn:f_V}. Therefore $\chi(f) = f_{\chi(V)}$ and thus $\chi(\Pi_{f}^\circ) = \Pi_{\chi(f)}^\circ$ for all $f\in \bound(k,n)$. Since $\chi$ preserves the partial ordering on $\bound(k,n)$, we get \[\chi(\Pi_{f}) = \bigsqcup_{f'\geq f} \chi(\Pi_{f'}^\circ) = \bigsqcup_{f'\geq f} \Pi_{\chi(f')}^\circ  = \Pi_{\chi(f)}.\]
\end{proof}

\begin{lem-def}
If $S$ is a cyclic interval,
we say $\Pi_{S\leq r}$ is a \textbf{basic positroid variety}. This is indeed an instance of a positroid variety. Set $\cJ_{S\leq r}\subset R(k,n)$ to be its defining ideal.
\end{lem-def}
\begin{proof}
    Let $S = [\alpha+1,\alpha+m]^\circ$. In the case where $\alpha = 0$, this is the Schubert variety $X_I$ where $I = [r]\cup [m+1,m+k-r]$.
    Since $\Pi_{S\leq r} = \chi^{\alpha}(\Pi_{[1,m]\leq r})$, $\Pi_{S\leq r}$ is a positroid variety.
\end{proof}
By Definition~\ref{def:essential-rank} and Proposition~\ref{prop:basic-intersection}, each positroid variety $\Pi_f$ is the (scheme-theoretic) intersection of basic positroid varieties:
\begin{equation}\label{eqn:basic-int}
    \Pi_f = \bigcap_{(i,j)\in ess(f)} \Pi_{[i,j]^\circ\leq r(f)_{i,j}}
\end{equation}

\begin{prop}\label{prop:uv}
    Let $0\le r<k$ and $S = [\alpha+1,\alpha+m]^\circ$ for some $\alpha\in [0,n-1]$, $r< m<n$, and $n-m+r\ge k$.  $\Pi_{S\leq r}$ is the positroid variety $\Pi_v^u$ where
    \begin{enumerate}
        \item if $\alpha< n-m-(k-r)$, then $u = w_{[n-k+1,n]}$ and $v = w_{[k+m-r+\alpha]\setminus [r+\alpha+1,m+\alpha]}$.
        \item if $n-m-(k-r)\leq \alpha \leq n-m$, then $u = w_{[n-m-(k-r)+1,\alpha]\cup[m-r+\alpha+1,n]}$ and $v = w_{[n]\setminus [r+\alpha+1,m+\alpha]}$.
        \item if $n-m<\alpha\leq n-r$, then $u = w_{[\alpha-(k-r)+1,\alpha]\cup [n-r+1,n]}$ and $v = w_{[\alpha-(n-m)+k-r]\setminus [\alpha-(n-m)]}$.
        \item if $n-r<\alpha\leq n-1$, then $u = w_{[n-k+1,n]}$ and $v = w_{[\alpha-(n-m)+k-r]\setminus [\alpha-n+r+1,\alpha-n+m]}$.
    \end{enumerate}
    (We note that when $m\le r$ the rank condition is trivial, and $n-m+r\ge k$ is a consequence of the underlying $k\times n$ matrix being full rank.)
\end{prop}

\begin{remark}
\label{rmk:uv-des}
  In Proposition~\ref{prop:uv}, the permutation $u$ has a unique descent at $k$ and $v$ has at most one descent. The descent of $v$ is at $k$ if and only if $\alpha=0$, and $v=\id$ if and only if $\alpha+m=n$. Otherwise $v$ has a descent at $l\neq k$, where $l>k$ if $a+m<n$ and $l<k$ if $\alpha+m>n$.
\end{remark}

\begin{example}
    \label{ex:uv}
    Fix $n=11$, $k=5$, $m=6$, $r=3$. Then $n-m-(k-r)=3$, $n-m=5$, $n-r=8$.
    \begin{enumerate}
        \item If $\alpha=2$, then $u=789\underline{10}\,\underline{11}123456$ and $v=123459\underline{10}678\underline{11}$.
        \item If $\alpha=5$, then $u=459\underline{10}\,\underline{11}123678$ and $v=\id$.
        \item If $\alpha=7$, then $u=679\underline{10}\,\underline{11}123458$ and $v=341256789\underline{10}\,\underline{11}$.
        \item If $\alpha=9$, then $u=789\underline{10}\,\underline{11}123456$ and $v=156234789\underline{10}\,\underline{11}$.
    \end{enumerate}
\end{example}

\begin{proof}[Proof of Proposition~\ref{prop:uv}] 
We start with the case where $\alpha = 0$. It follows from \cite{K14interval}*{Proposition~2.1} that 
$\Pi_{[1,m]\leq r} = \Pi_{f_0}$ where $f_0$ is the bounded affine permutation such that
\begin{equation}\label{eqn:f0}
    f_0(i) = \begin{cases}
    r+i & \text{ if }1\leq i\leq m-r\\
    k+i & \text{ if }m-r<i\leq n-k+r\\
    i+m+k-r & \text{ if }n-k+r<i\leq n
\end{cases}.
\end{equation}
More generally, by Lemma~\ref{lemma:chi}, for $S = [\alpha+1,\alpha+m]^{\circ}$, we have $\Pi_{S\leq r} = \Pi_{f_\alpha}$ where
\begin{equation}\label{eqn:May11aaa}
    f_\alpha(i) = f_0(i-\alpha)+\alpha.
\end{equation}
Set $I_\alpha = \{i\in [n]:f_\alpha(i)>n\}$, combining  \eqref{eqn:f0} and \eqref{eqn:May11aaa}, we get 
\begin{equation}\label{eqn:May11bbb}
    I_\alpha = \begin{cases}
        [n-k+1,n] & \text{ if }0\leq \alpha\leq n-m-(k-r)\\
        [n-m-(k-r),\alpha]\cup [m-r+\alpha+1,n] & \text{ if }n-m-(k-r)<\alpha\leq n-m\\
        [\alpha-(k-r)+1,\alpha]\cup[n-r+1,n] &\text{ if }n-m<\alpha\leq n-r\\
        [n-k+1,n]&\text{ if }n-r<\alpha\leq n-1
    \end{cases}.
\end{equation}
By Lemma~\ref{lemma:wtouv}, we get the corresponding $(u,v)$ as desired in Proposition~\ref{prop:uv}.
\end{proof}

\subsection{\texorpdfstring{$\gr(k,n)$}{Gr(k,n)} and \texorpdfstring{$\gr(n-k,n)$}{Gr(k,n)}}
For a positroid variety $\Pi_f\subset \gr(k,n)$, let 
$\displaystyle \k[\Pi_f] = \frac{R(k,n)}{\cJ_f}$ be the homogeneous coordinate ring of $\Pi_f$.

\begin{prop}\cite{KLS13juggling}*{Theorem~5.15}\label{prop:vanishplucker}
The ideal $\cJ_f$ 
is generated by the set of straightening relations as in Definition~\ref{def:Pluckerrelations} and the set of Pl\"ucker coordinates $[\aa]$ that vanish on $\Pi_f$.
\end{prop}

As an easy consequence, we have the following.
\begin{cor}\label{cor:vanishplucker}
    The defining ideal of a basic positroid variety $\Pi_{S\leq r}$ is
    \[\cJ_{S\leq r} = \langle [\aa]:\abs{\aa\cap S}\geq r+1\rangle.\]
\end{cor}

\begin{construction}\label{const: complement-interval}
Let $S = [\alpha+1,\alpha+m]^\circ$ be a cyclic interval and $r<m$. 
Define the following:
\begin{itemize}
    \item $S^\vee \coloneqq [\alpha+m+1,\alpha]^\circ$.
    \item $r^\vee \coloneqq n-k-m+r$
    \item For any monomial $\mm = \prod_{i = 1}^d [\mathbf{a}^{(i)}]\in R(k,n)$, define $\mm^\vee$ to be $\prod_{i = 1}^d [\mathbf{a}^{(i)\vee}]\in R(n-k,n)$, where $\mathbf{a}^{(i)\vee} \coloneqq [n]\setminus \mathbf{a}^{(i)}$ for all $i\in [d]$.
    \item For any polynomial $g \in R(k,n)$, define $g^\vee \in R(n-k,n)$ to be the polynomial obtained from $g$ by applying $\vee$ to each monomial summand.
\end{itemize}
There is a stratification-preserving isomorphism $\varphi$ of varieties between $\gr(k,n)$ and $\gr(n-k,n)$ which preserves the positroid stratification. In coordinates, the isomorphism is written as $\varphi:[\mathbf{a}^\vee]\mapsto [\mathbf{a}]$. It is easy to check that the straightening relations are preserved under this mapping of coordinates.
\end{construction}

\begin{prop}
    \label{prop:basic-dual}
    Adopt notation and hypotheses as in \Cref{const: complement-interval}. Then
    $$\varphi(\Pi_{S\le r})=\Pi_{S^\vee\le r^\vee}.$$
\end{prop}

\begin{proof}
    The ideal $\cJ_{S\le r}$ is generated by the straightening relations for $\gr(k,n)$ as well as vanishing of coordinates $[\aa]$ such that $|\aa\cap S|\ge r+1$. Therefore,
    \begin{align*}
        & n-|\aa\cap S| \le n-r-1 \\
        \implies & |(\aa\cap S)^\vee |\le n-r-1 \\
        \implies & |\aa^\vee \cup S^\vee |\le n-r-1 \\
        \implies &|\aa^\vee| + |S^\vee| - |\aa^\vee \cap S^\vee| \le n-r-1 \\
        \implies &n-k-m+r+1\le |\aa^\vee \cap S^\vee|.
    \end{align*}
    It follows that $\varphi(\Pi_{S\le r})=\Pi_{S^\vee\le r^\vee}$.
\end{proof}

\section{Initial ideals and standard monomials for positroid varieties}\label{section: initial-ideal}
The goal of this section is to describe the standard monomials for any positroid variety $\Pi_f$. Let $B(k,n,d)$ denote the set of rectangular semistandard tableaux of shape $k\times d$ with entries $\le n$. Recall from Section~\ref{section: hodge-algebra-background} that the set $B(k,n,d)$ can be identified with standard monomials for $\gr(k,n)$ under the Hodge degeneration. We write $\mm\in B(k,n,d)$ both for the monomial and its tableau. For a positroid variety $\Pi_f$, let\[B_f(k,n,d):=\{\mm\in B(k,n,d): \mm \not\in \inn_\omega(\cJ_f) \}.\]
Then $B_f(k,n,d)$ is the set of degree-$d$ standard monomials for $\Pi_f$, and it forms a basis of $\k[\Pi_f]_d$.

Our approach is to first consider the case where $\Pi_f = \Pi_{S\leq r}$ is a basic positroid variety. 
Theorem~\ref{thm:basic-std-mon} gives a characterization of the standard monomials in terms of their corresponding semistandard tableaux. This is done by combining the case where $S$ is an interval (Theorem~\ref{thm:initial-ideal}) and the case where $S^\vee$ is an interval (Proposition~\ref{prop:init-dual}). 
We then show that the set of standard monomials for an arbitrary positroid variety is the intersection of the set of standard monomials for some basic positroid varieties (Theorem~\ref{thm:std-mon-intersect}).

\subsection{The standard monomials for a basic positroid variety} 

Let $\mm = \prod_{i = 1}^d [\aa^{(i)}]$ be a standard monomial of $\gr(k,n)$ where the Pl\"ucker variables $\aa^{(1)} \leq \aa^{(2)} \leq \dots \leq \aa^{(d)}$ are sorted lexicographically. 

\begin{definition}\label{def:antidiag}
Fix an interval \(S=[\alpha+1,\alpha+m]\subseteq[n]\) and an integer \(r<m\).
Let \(\mm\in B(k,n,d)\) be a \(k\times d\) semistandard tableau with entries
\(a^{(i)}_j\) in row \(j\) and column \(i\).
A \textbf{generalized antidiagonal} of \(\mm\) for the rank condition \(S\le r\) is a choice of
indices
\[
1\le \rho_1<\cdots<\rho_\ell\le k,
\qquad
1\le \sigma_1\le \cdots \le \sigma_\ell\le d,
\]
such that the selected entries
\(
a^{(\sigma_\ell)}_{\rho_1}, a^{(\sigma_{\ell-1})}_{\rho_2},\dots,a^{(\sigma_1)}_{\rho_\ell}
\)
lie in \(S\) and are strictly increasing:
\[
\alpha+1\le a^{(\sigma_\ell)}_{\rho_1}<\cdots<a^{(\sigma_1)}_{\rho_\ell}\le \alpha+m.
\]
We say \(\mm\) \textbf{minimally contains} a generalized antidiagonal for \(S\le r\) if
for every column \(t\in[d]\), the tableau obtained by deleting column \(t\)
contains no generalized antidiagonal for \(S\le r\).
\end{definition}

Alternatively, take the reading word of the semistandard tableaux $\mm$ by reading the columns from right to left, and within each column top to bottom (i.e., traditional Chinese reading order.) 
Then $\mm$ contains a generalized antidiagonal for $S\leq r$ if and only if there is a strictly increasing subsequence of size $r+1$ with values in $S$ in the reading word\footnote{We thank Alex Yong for this description.}.

We start with the characterization of standard monomials for positroid varieties defined by a single interval rank condition.

\begin{theorem}\label{thm:initial-ideal}
Fix an interval \(S=[\alpha+1,\alpha+m]\subseteq[n]\) and \(r<m\), and consider the basic positroid variety
\(\Pi_{S\le r}\subset \gr(k,n)\) with defining ideal \(\cJ_{S\le r}\subset R(k,n)\).
A degree-\(d\) standard monomial \(\mm\in B(k,n,d)\) lies in \(B_{S\le r}(k,n,d)\)
(i.e.\ \(\mm\notin \inn_\omega(\cJ_{S\le r})\))
if and only if the tableau \(\mm\) contains no generalized antidiagonal of length \(r+1\)
with entries in \(S\) (Definition~\ref{def:antidiag}).
\end{theorem}

\begin{example}
Let $S = [4,7]$ and $r = 2$ in $\gr(5,10)$. The following tableaux do not correspond to standard monomials of $\Pi_{S\le r}$. Some minimal generators of $\inn_\omega(\cJ_{S\leq r})$ include
$$
\ytableausetup{boxsize = 1.2em}
\yshort{1,{\rr{4}},{\rr{5}},{\rr{7}},9} \qquad
\yshort{12,2{\rr{4}}, 38, {\rr{6}}9,{\rr{7}}{10}}\qquad
\yshort{12,2{\rr{4}},3{\rr{5}},{\rr{6}}8,{\rr{7}}9} \qquad 
\yshort{
122,23{\rr{5}},3{\rr{6}}7,{\rr{7}}79,8{10}{10}} \; .
$$
In the second tableau above, the unique sequence forming a generalized antidiagonal of length $r+1 = 3$ is
$$
(a^{(2)}_2, a^{(1)}_4, a^{(1)}_5) = (4,6,7).
$$
The following monomials do \textit{not} contain any generalized antidiagonal of size $3$ with entries in the interval $S$, and thus are standard monomials for $\Pi_{S\le r}$:
$$
\yshort{16,27,38,49,5{10}} \qquad 
\yshort{
1111,2222,4567,7778,8889
}
$$
\end{example}

Our main tool for proving \Cref{thm:initial-ideal} will be the following result from \cite{KLS2014projections}, which allows us to understand standard monomials for $\Pi_f$ in terms of chains of permutations in Bruhat order.

\begin{notation}\label{not: oneline-not-chain}
For a permutation $v\in \symm_n$ we will sometimes write $v$ in one-line notation $v = (v(1)\ v(2)\cdots v(n))$. For any positive integer $p\leq n$, set $v([p]):= \{v(1),\cdots, v(p)\}$. 
\end{notation}

\begin{prop}\cite{KLS2014projections}*{Theorem 7.1}\label{prop:chainlift}
Let \(\Pi_{v}^{u}\subset \gr(k,n)\) be the positroid variety associated to a $k$-Bruhat interval \([v,u]_k\),
and let \(\cJ_{[v,u]}\) denote its defining ideal under the Pl\"ucker embedding.
Fix \(\mathbf{a}^{(1)},\dots,\mathbf{a}^{(d)}\in\binom{[n]}{k}\) with
\(\mathbf{a}^{(1)}\le \cdots \le \mathbf{a}^{(d)}\).
Then \(\prod_{i=1}^d[\mathbf a^{(i)}]\notin \inn_\omega(\cJ_{[v,u]})\) if and only if
there exists a chain \(\{v^{(1)},\dots,v^{(d)}\}\subset \symm_n\) such that
\begin{equation}\label{eqn:chain}
v\le v^{(1)}\le \cdots \le v^{(d)}\le u
\quad\text{and}\quad
v^{(i)}([k])=\mathbf a^{(i)}\ \text{for all }i.
\end{equation}
\end{prop}

The following proposition justifies why we restrict our attention to basic positroid varieties with $S = [\alpha+1,\alpha+m]$ and $r<m$; that is, we do not need to consider separately the case where the interval $S$ ``wraps around'' when we prove \Cref{thm:initial-ideal}.

\begin{prop}\label{prop:init-dual}
Adopt the notation of Construction~\ref{const: complement-interval}, so that
\(\varphi:\gr(k,n)\to \gr(n-k,n)\) identifies Pl\"ucker coordinates via complements and sends
\(\Pi_{S\le r}\) to \(\Pi_{S^\vee\le r^\vee}\).
Then a monomial \(\mm\in R(k,n)\) lies in \(\inn_\omega(\cJ_{S\le r})\) if and only if its
complement monomial \(\mm^\vee\in R(n-k,n)\) lies in \(\inn_\omega(\cJ_{S^\vee\le r^\vee})\).
\end{prop}

\begin{proof}[Proof of \Cref{prop:init-dual}]
    Let $[v,u]$ be the Grassmann interval such that $\Pi_{S\leq r} = \Pi_v^u$ as in Proposition~\ref{prop:uv}. Note that since $\alpha+1\leq n < \alpha+m$, we are in case $(3)$ and $(4)$ of Proposition~\ref{prop:uv}. Let $z\in \symm_k\times \symm_{n-k}\subset \symm_n$ be such that $vz$ is the maximal representative in the coset $v\cdot \symm_k\times \symm_{n-k}$.
    Then $[vz,uz]_k\sim [v,u]_k$ and thus 
    \[\prod_{i = 1}^d [\aa^{(i)}]\in \inn_\omega(\cJ_{[v,u]}) \iff \prod_{i = 1}^d [\aa^{(i)}]\in \inn_\omega(\cJ_{[vz,uz]}).\]
    Let $w_0$ be the longest permutation in $\symm_n$. Since multiplying by $w_0$ is an order reversing automorphism on $\symm_n$ with respect to strong Bruhat order, 
    by \eqref{eqn:chain}, we also have 
    \[\prod_{i = 1}^d [\aa^{(i)}]\in \inn_\omega (\cJ_{[vz,uz]}) \iff \prod_{i = 1}^d [\aa^{(i)\vee}]\in \inn_\omega(\cJ_{[uzw_0,vzw_0]}).\]
    In particular, $[uzw_0,vzw_0]$ is a Grassmann interval.
    Therefore we only need to verify that $\Pi_{[uzw_0,vzw_0]} = \Pi_{S^\vee\leq r^\vee}$ through Proposition~\ref{prop:uv}. 
    Here we set $[\alpha'+1,\alpha'+m'] := S^\vee$ and $\Pi_{S^\vee\leq r^\vee}\subset \gr(k',n)$, then 
    \begin{equation}\label{eqn:S'}
        \alpha' = \alpha+m-n, r^\vee = n-k-m+r, m' = n-m\text{ and }k' = n-k.
    \end{equation}
    We now divide into four cases:

    \noindent\textbf{Case \RNum{1}} ($n-m<\alpha\leq n-r$ and $r\leq \alpha-n+m$): Here we are in case (3) of Proposition~\ref{prop:uv} and thus $u = w_{[\alpha-(k-r)+1,\alpha]\cup [n-r+1,n]}$ and $v = w_{[\alpha-(n-m)+k-r]\setminus [\alpha-n+m]}$. Since $r\leq \alpha-n+m$, we have 
    \[v(1)\cdots v(k) = (\alpha-n+m+1)\cdots (\alpha-n+m+k-r)\ 1\cdots r.\]
    and 
    \[z = (k-r)\cdots 1\ k\cdots (k-r+1)\ n\cdots (k+1).\]
    Since $r\leq \alpha-n+m<m$, we have $n-m'-k'+r^\vee\leq \alpha'<n-m'$ by \eqref{eqn:S'} and $\Pi_{S^\vee\leq r^\vee}$ falls into case (2) of Proposition~\ref{prop:uv}.
    We then have 
    \[vzw_0 = w_{[r+1,n]\setminus [\alpha-n+m+1,\alpha-n+m+k-r]} = w_{[n-m'-k'+r^\vee+1,\alpha']\cup[m'-r^\vee+\alpha'+1,n]}\]
    and
    \[uzw_0 = w_{[n]\setminus [\alpha-k+r+1,\alpha]} = w_{[n]\setminus [r^\vee+\alpha'+1,m'+\alpha']}\]
    as desired.

    \noindent\textbf{Case \RNum{2}.} ($n-m<\alpha\leq n-r$ and $r>\alpha-n+m$): Here we are still in case (3) of Proposition~\ref{prop:uv} as in Case \RNum{1}. Since $r>\alpha-n+m$, we have
    \[v(1)\cdots v(k) = (\alpha-n+m+1)\cdots (\alpha-n+m+k-r)\ 1\cdots (\alpha-n+m) \ (\alpha-n+m+k-r+1) \cdots k,\]
    and 
    \[z = k\cdots (\alpha-n+m+k-r+1)\ (k-r)\cdots 1\ (\alpha-n+m+k-r)\cdots (k-r+1)\ n\cdots (k+1).\]
    Since $r>\alpha-n+m$, we have $0\leq \alpha' < n-m'-k'+r^\vee$ and $\Pi_{S^\vee\leq r^\vee}$ falls into case (1) of Proposition~\ref{prop:uv}. We have
    \[vzw_0 = w_{[k+1,n]} = w_{[n-k'+1,n]}\]
    and
    \[uzw_0 = w_{[\alpha+m-r]\setminus [\alpha-k+r+1,\alpha]} = w_{[k'+m'-r^\vee+\alpha']\setminus [r^\vee+\alpha'+1,m'+\alpha']}\]
    as desired.
    
    \noindent\textbf{Case \RNum{3}.} ($n-r<\alpha\leq n-1$ and $r\leq \alpha-n+m$): Here we are in case (4) of Proposition~\ref{prop:uv} and thus $u = w_{[n-k+1,n]}$ and $v = w_{[\alpha-n+m+k-r]\setminus[\alpha-n+r+1,\alpha-n+m]}$. Since $r\leq \alpha-n+m$, $\Pi_{S^\vee\leq r^\vee}$ falls into case (2) of Proposition~\ref{prop:uv}. A similar computation as in the previous two cases gives us
    \[vzw_0 = w_{[r+1,\alpha-n+m]\cup [\alpha-n+m+k-r+1,n]} = w_{[n-m'-k'+r^\vee+1,\alpha']\cup[m'-r^\vee+\alpha'+1,n]}\]
    and 
    \[uzw_0 = w_{[n]\setminus [\alpha-k+r+1,\alpha]} = w_{[n]\setminus [r^\vee+\alpha'+1,m'+\alpha']}\]
    as desired.
    
    \noindent\textbf{Case \RNum{4}.} ($n-r<\alpha\leq n-1$ and $r> \alpha-n+m$): Here $\Pi_{S\leq r}$ is in case (4) of Proposition~\ref{prop:uv} and $\Pi_{S^\vee\leq r^\vee}$ is in case (1). A similar computation yields
    \[vzw_0 = w_{[k+1,n]} = w_{[n-k'+1,n]}\]
    and 
    \[uzw_0 = w_{[\alpha+m-r]\setminus [\alpha-k+r+1,\alpha]} = w_{[k'+m'-r^\vee+\alpha']\setminus [r^\vee+\alpha'+1,m'+\alpha']}\]
    as desired.
    
    Since $n-m<\alpha\leq n-1$, we exhaust all possibilities and thus complete the proof.
\end{proof}

 Combining Theorem~\ref{thm:initial-ideal} and Proposition~\ref{prop:init-dual}, we obtain a characterization of standard monomials for basic positroid varieties.
\begin{theorem}\label{thm:basic-std-mon}
Fix a cyclic interval $S = [\alpha+1,\alpha+m]^\circ$ and $r\in \mathbb{N}$.
Let \(S^\vee=[n]\setminus S\) and \(r^\vee:=n-k-m+r\) as in Construction~\ref{const: complement-interval}.
Then $\mm = \prod_{i = 1}^{d}[\aa^{(i)}]$ is a standard monomial for $\Pi_{S\leq r}$ if and only if $\mm\in B(k,n,d)$ and 
    \begin{enumerate}
        \item If $\alpha+m\leq n$, then $\mm$ does not contain a generalized antidiagonal of size $r+1$ with entries in $S$;
        \item If $\alpha+m>n$, then $\mm^\vee$ does not contain a generalized antidiagonal of size $r^\vee+1$ with entries in $S^\vee$.
    \end{enumerate}
\end{theorem}

\begin{proof}
If $\alpha+m\le n$, then $S=[\alpha+1,\alpha+m]\subseteq [n]$ is an ordinary interval, so the claim is exactly \Cref{thm:initial-ideal}.

If $\alpha+m>n$, then $S=[\alpha+1,\alpha+m]^\circ$ wraps around. In this case, apply \Cref{prop:init-dual}: a monomial $\mm$ lies in $\inn_\omega(\cJ_{S\le r})$ if and only if $\mm^\vee$ lies in $\inn_\omega(\cJ_{S^\vee\le r^\vee})$. By construction, $S^\vee$ is a (non-wrapping) interval in $[n]$, so \Cref{thm:initial-ideal} applies to $\Pi_{S^\vee\le r^\vee}$ and shows that $\mm^\vee\notin \inn_\omega(\cJ_{S^\vee\le r^\vee})$ if and only if $\mm^\vee$ contains no generalized antidiagonal of size $r^\vee+1$ with entries in $S^\vee$. Translating back via \Cref{prop:init-dual} gives the stated criterion for $\mm$.
\end{proof}

\begin{example}
Fix $\gr(2,6)$ and let $S = [6,1]^\circ$ and $r = 1$. 
By \Cref{const: complement-interval}, we have $S^\vee = [6]\setminus S = [2,5]$ and $r^\vee = 6-2-2+1 = 3$.
Given a monomial $\mm$ in $\inn_\omega(\cJ_{S\leq r})\subset R(2,6)$, we may obtain the corresponding monomial in $\inn_\omega(\cJ_{S^\vee\leq r^\vee}) \subset R(4,6)$ by taking complements of each of the Pl\"{u}cker coordinates dividing $\mm$.
For instance,
\begin{gather*}
\yshort{1,6} \xleftrightarrow[]{\text{complement}} \yshort{{\rr{2}},{\rr{3}},{\rr{4}}, {\rr{5}}}
\;, \quad \yshort{23,65} \xleftrightarrow[]{\text{complement}} \yshort{11,32,44,56} = \yshort{11,23,44,65} \; , \\
\end{gather*}
\begin{gather*}
\yshort{1234,3456}\xleftrightarrow[]{\text{complement}} \yshort{2111,4322,5543,6665} = \yshort{111{\rr{2}},22{\rr{3}}4,3{\rr{4}}55,{\rr{5}}666}.
\end{gather*}
\end{example}

\begin{notation}\label{not: ordered-sets-leq}
For ordered sets $I = \{i_1<i_2<\cdots<i_p\}$ and $J = \{j_1<j_2<\cdots<j_p\} \subset[n]$, we write $I\leq J$ if $i_a\leq j_a$ for all $a\in [p]$.
\end{notation}

The following classical result on strong Bruhat order is a key ingredient for proving Theorem~\ref{thm:initial-ideal}.

\begin{lemma}[\cite{BB2005}, Corollary 2.6.2]\label{lemma:bruhat}
    For $u,v\in \symm_n$, the permutation $v$ is less than or equal to $u$ in Bruhat order if and only if for all $\ell\in \mathrm{Des}(v)$,
    \[\{v(1),\cdots,v(\ell)\}\leq \{u(1),\cdots,u(\ell)\}.\]
    In particular, if $u = w_I, v = w_J$ are both $k-$Grassmannian permutations, then 
    \[u\leq v \iff I\leq J.\]
\end{lemma}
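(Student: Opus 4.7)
The plan is to prove the equivalence for general $u, v \in \symm_n$ in two steps—forward and backward—using the classical (non-descent-restricted) tableau criterion as a waypoint for the harder direction, and then to read off the Grassmannian case. For the forward direction, I will induct on $\ell(u) - \ell(v)$ along a saturated chain of Bruhat covers. A cover $w \lessdot w'$ is effected by right-multiplication by a transposition $(a\,b)$ with $a < b$, $w(a) < w(b)$, and no $w(c)$ for $a < c < b$ strictly between $w(a)$ and $w(b)$. The sorted window $\{w(1), \ldots, w(\ell)\}$ is unchanged for $\ell < a$ or $\ell \geq b$, and for $a \leq \ell < b$ the only change is that $w(a)$ is replaced by the larger value $w(b)$. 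Thus the sorted window sequence weakly increases componentwise at each cover, so the inequality propagates and holds at every $\ell \in [n-1]$, in particular at the descents of $v$.

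For the backward direction, I will invoke the full tableau criterion of Björner--Brenti (Theorem 2.6.3), which states that $v \leq u$ if and only if the componentwise window inequality holds at every $\ell \in [n-1]$. The task then reduces to upgrading the descent-only hypothesis to the full statement. I plan to reformulate the componentwise set comparison as a family of rank inequalities $|\{i \leq \ell : v(i) \geq m\}| \leq |\{i \leq \ell : u(i) \geq m\}|$, $m \in [n]$, which are better behaved than the raw componentwise comparisons. The key structural observation is that between consecutive descents of $v$ the values of $v$ are strictly increasing, so both sides of the rank inequality evolve in a very controlled way as $\ell$ moves from one descent to the next. This should let me propagate the inequality from the next descent $\ell' > \ell$ down to any non-descent index $\ell$. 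I expect the main obstacle to be the careful bookkeeping in this propagation step, since removing elements from sorted sequences can break naive componentwise order; the rank-inequality reformulation is intended precisely to circumvent this.

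Finally, the ``in particular'' statement follows by applying the main equivalence with the roles of $u$ and $v$ interchanged: since $u = w_I$ is $k$-Grassmannian, $\mathrm{Des}(u) \subseteq \{k\}$, so the only potentially non-trivial inequality is at $\ell = k$, which reads $\{u(1), \ldots, u(k)\} = I \leq J = \{v(1), \ldots, v(k)\}$. When $\mathrm{Des}(u) = \emptyset$, i.e.\ $u = \mathrm{id}$ and $I = [k]$, both $u \leq v$ and $I \leq J$ hold trivially.
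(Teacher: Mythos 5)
The paper does not prove this lemma --- it imports it verbatim from Bj\"orner--Brenti --- so there is no in-paper argument to compare against; I am reviewing your proof on its own. Your forward direction is correct: a Bruhat cover $w \lessdot w(a\,b)$ replaces $w(a)$ by the larger value $w(b)$ in each window $\{w(1),\dots,w(\ell)\}$ with $a\le \ell<b$ and leaves the other windows unchanged, so the sorted windows weakly increase componentwise along any saturated chain. The ``in particular'' statement is also handled correctly. One caution on sourcing: the external result you invoke for the converse must be Ehresmann's all-$\ell$ criterion; in Bj\"orner--Brenti, Theorem~2.6.3 \emph{is} the descent-restricted tableau criterion, i.e.\ essentially the statement being proved, so citing it there would be circular.

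The genuine gap is in the backward direction. You propose to propagate the rank inequalities $N_v(\ell,m):=|\{i\le \ell: v(i)\ge m\}|\le N_u(\ell,m)$ from the next descent $\ell'>\ell$ \emph{down} to a non-descent index $\ell$. This one-directional propagation is impossible, not merely a matter of bookkeeping: take $v=312$, $u=213$, $\ell=2$. Then $\ell=2$ is not a descent of $v$, the window inequality holds trivially at $\ell'=3$, yet $\{1,3\}\not\le\{1,2\}$; the lemma's hypothesis fails here only because of the descent of $v$ at position $1$, so information from \emph{below} $\ell$ is indispensable. The correct propagation uses both neighboring descents, split according to the threshold $m$. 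If $m\le v(\ell+1)$: letting $d_2$ be the next descent after $\ell$ (or $n$), the values $v(\ell+1)<\dots<v(d_2)$ are all $\ge m$, so $N_v(\ell,m)=N_v(d_2,m)-(d_2-\ell)\le N_u(d_2,m)-(d_2-\ell)\le N_u(\ell,m)$. If $m>v(\ell+1)$: letting $d_1$ be the previous descent (or $0$), the values $v(d_1+1)<\dots<v(\ell)<v(\ell+1)$ are all $<m$, so $N_v(\ell,m)=N_v(d_1,m)\le N_u(d_1,m)\le N_u(\ell,m)$, using that $N_u(\cdot,m)$ is weakly increasing. With this two-sided case analysis your outline closes; as written, the one-sided step would prove false statements.
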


We are now ready to prove the main theorem of this section.

\begin{proof}[Proof of Theorem~\ref{thm:initial-ideal}]
    
    \noindent $(\implies):$ We first show that if the monomial $\mm = \prod_{i = 1}^d [\mathbf{a}^{(i)}]\in B(k,n,d)$ contains a generalized antidiagonal for $S\leq r$, then $\mm$ does not lift to a chain in Bruhat order as in \Cref{prop:chainlift} and therefore is not a standard monomial for $\Pi_{S\leq r}$.
Let 
$$\cc = (a^{(\sigma_{r+1})}_{\rho_1}, a^{(\sigma_{r})}_{\rho_2}, \dots , a^{(\sigma_1)}_{\rho_{r+1}}) = (c_1,\dots, c_{r+1})$$
be a generalized antidiagonal of length $r+1$ in $\mm$.
    
Write $R = \sqcup_{i = 1}^d R_i$ where 
$$R_i = \{\rho_{j} : a_j^{(i)} = c_h \text{ for some } h\in [r+1]\}.$$
In words, $R_i$ is the set of row indices of entries in the generalized antidiagonal whose column indices are $i$.
    
    Suppose, seeking contradiction, that there exist permutations $v^{(1)},\cdots,v^{(d)}$ satisfying \eqref{eqn:chain}. Set $v^{(0)} = v$ and $v^{(d+1)} = u$. 
    We first consider the case where $\alpha \le n-m-(k-r)$. By Proposition~\ref{prop:uv}, 
    \[u=w_{[n-k+1,n]}\text{ and }v=w_{[k+m-r+\alpha]\setminus [r+\alpha+1,m+\alpha]}.\]
    Here $v$ has a unique descent at $k+\alpha$.
    
    Set $\bb^{(i)}=v^{(i)}([k+\alpha]) = \{v^{(i)}(1),\cdots, v^{(i)}(k+\alpha)\}$ for $0\le i\le d+1$. Then $u([k+\alpha]) = \bb^{(d+1)}=[\alpha]\cup[n-k+1,n]$ and $v([k+\alpha]) = \bb^{(0)}=[r+\alpha]\cup [m+\alpha+1,k+m-r+\alpha]$. Notice that $b^{(0)}_{r+\alpha+1}=m+\alpha+1$ and $b^{(d+1)}_\alpha=\alpha$. Furthermore, $\aa^{(i)}\subset \bb^{(i)} $ for all $1\le i\le d$ and $\bb^{(i)}\leq \bb^{(i+1)}$ (where here $<$ is the order from \Cref{not: ordered-sets-leq}).   

    Set $s_i = \sum_{j = i}^d |R_j|$. Since $\bb^{(d)}\leq \bb^{(d+1)}$, we have $\alpha\leq b^{(d)}_\alpha \leq b^{(d+1)}_\alpha = \alpha$. Thus $b^{(d)}_\alpha=\alpha$. Since $b^{(d)} \supset \{a^{(d)}_{\rho}:\rho\in R_d\}$ and $a^{(d)}_{\rho}> \alpha = b^{(d)}_\alpha$ for all $\rho\in R_d$, we have
    \[b^{(d)}_{\alpha+s_d} \leq a^{(d)}_{\rho_{s_d}}.\]
    Similarly, since $\bb^{(d-1)}\le \bb^{(d)}$, we have 
    $b^{(d-1)}_{\alpha+s_d}\le b^{(d)}_{\alpha+s_d}\le a^{(d)}_{\rho_{s_d}}<a_{\rho_{s_{d}+1}}^{(d-1)}$, and thus $b^{(d-1)}_{\alpha+s_d} < a^{(d-1)}_\rho$ for all $\rho\in R_{d-1}$.
    Since $\bb^{(d-1)}\supset \{a^{(d-1)}_{j}:j\in R_{d-1}\}$, we get
    \[b^{(d)}_{\alpha+s_{d-1}} \leq a^{(d)}_{\rho_{s_{d-1}}}.\]
    Continuing this pattern, we eventually have \[ b^{(1)}_{\alpha+r+1} = b^{(1)}_{\alpha+s_1} \leq a^{(1)}_{\rho_{s_1}} = a^{(1)}_{\rho_{r+1}} \leq \alpha+m.\] 
    This contradicts $b^{(0)}_{\alpha+r+1}=\alpha+m+1$ as $b^{(0)}_{\alpha+r+1} \leq b^{(1)}_{\alpha+r+1}$.

    Now consider the case where $n-m-(k-r)<\alpha\le n-m$. By Proposition~\ref{prop:uv}, $$u = w_{[n-m-(k-r)+1,\alpha]\cup[m-r+\alpha+1,n]}\text{ and }v = w_{[n]\setminus [r+\alpha+1,m+\alpha]}.$$ When $\alpha+m<n$, $v$ has a unique descent at $r+n-m$. 

    Set $\bb^{(i)}=\{v^{(i)}(1),\cdots, v^{(i)}(r+n-m)\}$ for $0\le i\le d+1$. Then $\bb^{(d+1)}=[\alpha]\cup[m-r+a+1,n]$ and $\bb^{(0)}=[r+\alpha]\cup[m+\alpha+1,n]$. The rest of the argument is identical to the previous case.

    \noindent $(\impliedby):$ We now show that if $\mm = \prod_{i = 1}^d [\mathbf{a}^{(i)}]\in B(k,n,d)$ does not contain a generalized antidiagonal for $S\leq r$, then  $\prod_{i = 1}^d [\mathbf{a}^{(i)}]\notin \inn_\omega (\cJ_{S\le r})$. 
    Notice that each Pl\"{u}cker variable dividing $\mm$ does not contain a generalized antidiagonal; that is, $(\aa^{(i)}\cap S) \leq r$  for all $i\in [d]$.
    In particular, $[\aa^{(i)}]\notin \inn_\omega(\cJ_{S\le r})$ and, by Proposition~\ref{prop:chainlift}, 
    $v([k])\leq \aa^{(i)}\leq u([k])$ for all $i\in [d]$.
     
     Suppose first that $\alpha\le n-m-(k-r)$; that is, we are in case $(1)$ of Proposition~\ref{prop:uv}. Here we have $\mathrm{Des}(u) = \{k\}$ and $\mathrm{Des}(v) = \{k+\alpha\}$.
    By Proposition~\ref{prop:chainlift}, we will show that there exists permutations $v^{(1)}\dots v^{(d)}$ satisfying conditions in \eqref{eqn:chain}. 
    By Lemma~\ref{lemma:bruhat}, it suffices to construct $\{\mathbf{b}^{(i)}:i\in [d]\}$ such that
    \begin{align}\label{eqn:defb}
    \begin{split}
        \mathbf{a}^{(i)}\subset \mathbf{b}^{(i)} \subset [n]&,\  |\mathbf{b}^{(i)}| = k+\alpha\text{ and }\\
        [k+m-r+\alpha]\setminus[r+\alpha+1,m+a] &\leq \mathbf{b}^{(1)} \leq \dots \leq \mathbf{b}^{(d)} \leq [\alpha]\cup[n-k+1,n].
     \end{split}
    \end{align}
    Indeed, for each $1\le i\le d$, we can construct $v^{(i)}$ from $\aa^{(i)}$ and $\bb^{(i)}$ by setting 
    \[\{v^{(i)}(1)<\cdots < v^{(i)}(k)\}=\aa^{(i)},\]
     \[\{v^{(i)}(k+1)<\cdots<v^{(i)}(k+\alpha)\}=\bb^{(i)}\setminus\aa^{(i)},\text{ and}\]

    Set $\bb^{(d+1)}:= u([\alpha+k]) = [\alpha]\cup[n-k+1,n]$. For each $1\le i\le d$, we iteratively construct $\bb^{(i)}$ from $\aa^{(i)}$ and $\bb^{(i+1)}$ as follows:
    \begin{enumerate}
        \item Initialize $p=1$;
        \item For each $1\le j \le k+\alpha$:
        \begin{itemize}
            \item[-] If $b_j^{(i+1)} < a_p^{(i)}$, set $b_j^{(i)}:=b_{j}^{(i+1)}$
            \item[-] Otherwise, set $b_{j}^{(i)}:=a^{(i)}_p$, and increment $p$ by 1.
        \end{itemize}
    \end{enumerate}
    Finally, set $\bb^{(0)}:=[r+\alpha]\cup [\alpha+m+1,k+m-r+\alpha]$.

    It is easy to see by construction that for all $1\le i\le d$ we have $\bb^{(i)}\leq \bb^{(i+1)}$ and 
    $\aa^{(i)}\subset \bb^{(i)}$.
    By \eqref{eqn:defb}, we are left to show that $\bb^{(1)}\geq \bb^{(0)} \coloneqq v([k+\alpha])$.
    Notice that since $[\alpha]\subset \bb^{(d+1)}$, by construction, we have $[\alpha]\subset \bb^{(0)},\bb^{(1)}$. Since $\bb^{(0)} = [r+\alpha]\cup[\alpha+m+1,k+m-r+\alpha]$, it is enough to show that 
    \[|\bb^{(1)}\cap S |\leq r.\]

     For $1\le i\le d, 1\le j\le k$, define
\[\antidiag(i,j):=\Biggl\{ ((\rho_1,\sigma_h),\cdots, (\rho_h,\sigma_1)):\begin{array}{c}
    1\le \rho_1<\cdots < \rho_h\le j, \\
    i\le \sigma_1\leq \cdots\leq \sigma_h\le d,\\
    \alpha+1\le a^{(\rho_1)}_{\sigma_h}<\cdots<a^{(\rho_h)}_{\sigma_1}\le \alpha+m
\end{array}
         \Biggl\}
\]
    to be the set of generalized antidiagonals in the first $j$ rows and last $d-i+1$ columns.

\begin{notation}\label{not:lad}
For \(1\le i\le d\) and \(1\le j\le k\), let \(\antidiag(i,j)\) denote the set of generalized antidiagonals
contained in the first \(j\) rows and the last \(d-i+1\) columns of the tableau \(\mm\):
\[
\antidiag(i,j):=\Bigl\{((\rho_1,\sigma_h),\dots,(\rho_h,\sigma_1)):\ 
\begin{array}{c}
1\le \rho_1<\cdots<\rho_h\le j,\quad i\le \sigma_1\le\cdots\le \sigma_h\le d,\\
\alpha+1\le a^{(\sigma_h)}_{\rho_1}<\cdots<a^{(\sigma_1)}_{\rho_h}\le \alpha+m
\end{array}
\Bigr\}.
\]
Define \(\lad(i):=\max\{\length(D): D\in \antidiag(i,k)\}\).
\end{notation}

    \begin{claim}
    \label{claim:b-size}
    The following holds:
    \begin{equation}\label{eqn:lad}
        |\bb^{(1)}\cap S |\le \max \left \{|\bb^{(d+1)}\cap S| \; , \; \lad(1) \right\}.
    \end{equation}
    \end{claim}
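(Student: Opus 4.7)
The plan is to prove the bound by reverse induction on $i$ from $d+1$ down to $1$, establishing the stronger invariant
$$|\bb^{(i)} \cap S| \;\le\; \max\bigl\{|\bb^{(d+1)} \cap S|,\, \lad(i)\bigr\}.$$
The base case $i=d+1$ is immediate since $\lad(d+1)=0$ and the left side equals the first term in the maximum.

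For the inductive step, I would analyze the merge that builds $\bb^{(i)}$ from $\bb^{(i+1)}$ and $\aa^{(i)}$ position by position: at each $j \in [k+\alpha]$ we either inherit $b_j^{(i+1)}$ or insert some $a_\rho^{(i)}$ while discarding $b_j^{(i+1)}$. Classifying the insertions by whether the inserted value and the discarded value lie in $S$ gives
$$|\bb^{(i)} \cap S| \;=\; |\bb^{(i+1)} \cap S| + N_+ - N_-,$$
where $N_+$ counts insertions of an $S$-element displacing an element strictly greater than $\alpha+m$, and $N_-$ counts insertions of a non-$S$-element displacing an $S$-element (note that an insertion requires $a^{(i)}_{\rho}\le b^{(i+1)}_j$, which rules out other combinations with $a^{(i)}_\rho\in S$). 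Any net increase in the $S$-count must therefore come from an $N_+$ event. The key structural step is then to show that each $N_+$ event produces an entry $a^{(i)}_\rho \in S$ in column $i$ at some row $\rho$ that can be prepended to an antidiagonal of length $|\bb^{(i+1)} \cap S|$ supplied by the inductive hypothesis, yielding a strictly longer antidiagonal in $\antidiag(i,k)$ and hence raising $\lad(i)$ by at least the net gain, so that the invariant is preserved.

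The main obstacle will be verifying the three defining conditions of a generalized antidiagonal (strictly increasing row indices, weakly increasing column indices, strictly increasing values) for the extended sequence. In particular, I will need to show that the displaced entry $b_j^{(i+1)} > \alpha+m$ at position $j$ lies at a row strictly below those occupied by the entries of the antidiagonal it ``replaces,'' so that prepending $(\rho,i)$ keeps the row indices strictly increasing; the column indices are automatic from the induction; and the inserted value $a^{(i)}_\rho$ is strictly smaller than the smallest value in the inherited antidiagonal, which should follow from the merge comparison $a^{(i)}_\rho \le b^{(i+1)}_j$ combined with monotonicity of $\bb^{(i+1)}$ and the fact that the antidiagonal values sit strictly above the displaced position. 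Handling the interleaving of $N_+$ and $N_-$ events within the same column $i$, and tracking how multiple net gains in a single column stack at distinct rows, is where the careful bookkeeping will concentrate.
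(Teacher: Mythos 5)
Your strategy is genuinely different from the paper's: you run a reverse induction on $i$ with the invariant $|\bb^{(i)}\cap S|\le\max\{|\bb^{(d+1)}\cap S|,\lad(i)\}$ and account for the change of $|\bb^{(i)}\cap S|$ across one merge step, whereas the paper fixes $h=|\bb^{(1)}\cap S|$ once and for all, splits on whether $b^{(d+1)}_{\alpha+h}$ equals or strictly exceeds $\alpha+m$, and in the latter case traces \emph{every} one of the $h$ entries back to the column where it was last inserted, reading off an antidiagonal of length $h$ directly. Your $N_+/N_-$ bookkeeping is itself fine (one can even check $N_-=0$ always), but the inductive step has a genuine gap.

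The gap: the inductive hypothesis is an inequality of cardinalities and does \emph{not} supply a generalized antidiagonal of length $|\bb^{(i+1)}\cap S|$ in columns $\ge i+1$. When the maximum is achieved by $|\bb^{(d+1)}\cap S|$ while $\lad(i+1)<|\bb^{(i+1)}\cap S|$, an $N_+$ event at step $i$ pushes the count strictly above $|\bb^{(d+1)}\cap S|$, and your extension argument yields only $\lad(i)\ge\lad(i+1)+N_+$, which can fall short of $|\bb^{(i)}\cap S|$. Concretely, take $n=10$, $k=3$, $S=[3,8]$, $d=2$, with columns $\aa^{(1)}=\{2,3,4\}$ and $\aa^{(2)}=\{2,9,10\}$. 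Then $\bb^{(3)}=\bb^{(2)}=\{1,2,8,9,10\}$, so $|\bb^{(2)}\cap S|=1$ but $\lad(2)=0$; at step $1$ one has $N_+=1$ and $|\bb^{(1)}\cap S|=2$, yet there is no length-$1$ antidiagonal in column $2$ to extend, so your step only certifies $\lad(1)\ge 1$ and cannot reach the required $\lad(1)\ge 2$. The true length-$2$ antidiagonal $(3,4)$ lives entirely in column $1$: its entry $3$ comes from a \emph{non}-$N_+$ insertion (it displaced the value $8\in S$), which your accounting never examines. This is exactly what the paper's Case~\RNum{2} supplies: once $b^{(d+1)}_{\alpha+h}>\alpha+m$, the consecutive values of $\bb^{(d+1)}$ beyond position $\alpha$ force \emph{all} $h$ entries to have been inserted at some step, not just the net-gain ones. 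Repairing your induction would require carrying an explicit antidiagonal, with bounds on its bottom row and largest value, through the induction---essentially re-deriving the trace-back. Separately, your value comparison is reversed: by Definition~\ref{def:antidiag} values increase with the row index, so the new entry in column $i$, which sits in the leftmost column at the lowest row of the extended antidiagonal, must carry the \emph{largest} value, not the smallest; the correct inequality is $a^{(i)}_\rho>b^{(i+1)}_{\alpha+h_{i+1}}\ge{}$(every value of the inherited antidiagonal), which follows because $a^{(i)}_\rho$ was not inserted at position $\alpha+h_{i+1}$ or earlier.
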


    \begin{proof}
        Fix $h$ to be the largest number such that $b^{(1)}_{\alpha+h}\in S$. By definition,
        \begin{equation}\label{eqn:Apr25ccc}
            |\bb^{(1)}\cap S|=h.
        \end{equation}

        Observe that $b_{\alpha+h}^{(d+1)}\ge \alpha+m$, since otherwise we must have $b_{\alpha+h+1}^{(1)}\le b_{\alpha+h+1}^{(d+1)}=b_{\alpha+h}^{(d+1)}+1\le \alpha+m$, 
        contradicting the choice of $h$. We must check two cases: 

        \noindent\textbf{Case \RNum{1}.} Suppose  $b^{(d+1)}_{\alpha+h}=\alpha+m$. Then we argue that 
        \begin{equation}\label{eqn:Apr25aaa}
    |\bb^{(1)}\cap S |=|\bb^{(d+1)}\cap S|,
        \end{equation}
        and thus \eqref{eqn:lad} holds.
        Indeed, since $[\alpha]\subset \bb^{(d+1)}$ and $b^{(d+1)}_{\alpha+h} = \alpha+m$, we have $$\bb^{(d+1)}\cap S = \{b^{(d+1)}_{\alpha+1},\cdots,b^{(d+1)}_{\alpha+h}\}.$$ Since $[\alpha]\subset \bb^{(1)}$, by the  definition of $h$, $\bb^{(1)}\cap S =\{b^{(1)}_{\alpha+1},\cdots,b^{(1)}_{\alpha+h}\}$. Therefore both sides of \eqref{eqn:Apr25aaa} equal $h$.

        \noindent\textbf{Case \RNum{2}.} Suppose $b_{\alpha+h}^{(d+1)}>\alpha+m$. 
        Then for each $0\le j< h$, $b_{\alpha+h-j}^{(d+1)}>\alpha+m-j$. 
        There exists a largest $i_h\ge 1$ such that $b^{(i_h)}_{\alpha+h}=b^{(1)}_{\alpha+h}$. Since $b_{\alpha+h}^{(d+1)}>\alpha+m\geq b^{(i_h)}_{\alpha+h}$, it must be the case that $i_h\le d$.
        By construction, $b^{(i_h)}_{\alpha+h}=a^{(i_h)}_{j_h}$ for some $j_h$. We now inductively construct $(i_y,j_y)$ for $y = h-1,\cdots, 1$.    
        
        Suppose $(i_y,j_y)$ has been constructed with $y\geq 2$. Let $i_{y-1}$ be the largest index such that
        $b^{(i_{y-1})}_{\alpha+y-1}=b^{(i_y)}_{\alpha+y-1}$. In particular, we have 
        \[i_{y}\leq i_{y-1} \text{ and } b^{(i_{y-1})}_{\alpha+y-1}< b^{(i_y)}_{\alpha+y}.\]
        Since $b^{(i_h)}_{\alpha+h} = \bb^{(1)}_{\alpha+h}\leq \alpha+m$, we have $b^{(i_{y-1})}_{\alpha+y-1} \leq \alpha+m-h+y-1$. Since $b^{(d+1)}_{\alpha+y-1}>\alpha+m-h+y-1$, it must be the case that $i_{y-1}\le d$.       
        
        Since $b^{(i_{y-1}+1)}_{\alpha+y-1}>b^{(i_{y-1})}_{\alpha+y-1}$,
        by construction, $b^{(i_{y-1})}_{\alpha+y-1}=a^{(i_{y-1})}_{j_{y-1}}$ for some $j_{y-1}$. Thus we have
        $$a^{(i_{y-1})}_{j_{y-1}} = b^{(i_{y-1})}_{\alpha+y-1}=b^{(i_{y})}_{\alpha+y-1}<b^{(i_{y})}_{\alpha+y} = a^{(i_y)}_{j_y}.$$
        Since $i_{y-1}\ge i_y$, by semistandardness, we must have $j_{y-1}<j_y$. 
        Therefore the pair $(i_{y-1},j_{y-1})$ we constructed satisfy
        \begin{equation}\label{eqn:May12aaa}
             i_{y}\leq i_{y-1}\leq d,\ j_{y-1}<j_y,\text{ and }  b^{(i_{y-1})}_{\alpha+y-1}=a^{(i_{y-1})}_{j_{y-1}}<a^{(i_y)}_{j_y} = b^{(i_y)}_{\alpha+y}.
        \end{equation}

        Let $(i_1,j_1),\cdots, (i_h,j_h)$ be pairs as inductively constructed above. By \eqref{eqn:May12aaa}, $a_{j_1}^{(i_1)}\cdots a_{j_h}^{(i_h)}$ is a generalized antidiagonal of length $h$. By \eqref{eqn:Apr25ccc} and the definition of $\lad$,
        \begin{equation}\label{eqn:Apr25bbb}
            |\bb^{(1)}\cap S |=h\le \lad(1),
        \end{equation}
        which implies \eqref{eqn:lad}.

        Since \eqref{eqn:lad} holds in both cases, our proof is complete.
    \end{proof}

    Returning to the proof of \Cref{thm:initial-ideal}, by Claim~\ref{claim:b-size} it suffices to show that 
\(|\bb^{(d+1)}\cap S|\le r\) and \(\lad(1)\le r\).
    Since $\mathbf{b}^{(d+1)}\cap S=[n-k+1,n]\cap[\alpha+1,\alpha+m]$, we have
         \[|\bb^{(d+1)}\cap S|=\begin{cases}0 &\text{ if } n-k+1>\alpha+m \\
         \alpha+m-n+k & \text{ otherwise}\end{cases},\]
    and therefore $\abs{\bb^{(d+1)}\cap S} \le r$ by the assumption that $\alpha \le n-m-(k-r)$.
Moreover, our standing assumption is that 
\(\mm\) does not contain any generalized antidiagonal of length \(r+1\) with entries in \(S\).
Equivalently (by the definition of \(\lad(1)\)), we have \(\lad(1)\le r\).
Therefore \(\max\{|\bb^{(d+1)}\cap S|,\lad(1)\}\le r\), and Claim~\ref{claim:b-size} gives
\(|\bb^{(1)}\cap S|\le r\).

    We are left with case (2) of Proposition~\ref{prop:uv}, where $n-m-(k-r)\leq \alpha \leq n-m$, \[u = w_{[n-m-(k-r)+1,\alpha]\cup[m-r+\alpha+1,n]}\text{ and }v = w_{[n]\setminus [r+\alpha+1,m+\alpha]}.\] The only difference is that we now set \[\bb^{(d+1)}:=[\alpha]\cup [m-r+\alpha+1,n]\text{ and }\bb^{(0)}=[r+\alpha]\cup [m+\alpha+1, n].\]
    We construct $\bb^{(i)}$ for $1\le i\le d$ using the same algorithm as before; now $\bb^{(i)}$ has length $r+n-m\ge k$. The statement and proof of Claim~\ref{claim:b-size} go through the same as before. The rest of the argument also follows since now $|\bb^{(d+1)}\cap S|=r$.
\end{proof}
    
\begin{example}
    Let $n=11$, $k=5$, $\alpha=2$, $m=6$, $r=3$ as in Example~\ref{ex:uv}(1), so the basic interval positroid variety in consideration is $\Pi_{[3,8]\le 3}$. Recall that $u=789\underline{10}\,\underline{11}123456$ and $v=123459\underline{10}678\underline{11}$. The  monomial given by the following tableau

\[
\ytableausetup{centertableaux}
\yshort{
112{\rr{\textbf{4}}},
22{\rr{\textbf{5}}}7,
3488,
5{\rr{\textbf{6}}}99,
{\rr{\textbf{8}}}9{10}{10}}
\]
contains a generalized antidiagonal with entries 4,5,6,8 labeled and boldfaced in red, and hence is an element of $\inn_\omega(\cJ_{[3,8]\le 3})$. 

If we remove the second column the tableau becomes 
\[
\ytableausetup{centertableaux}
\yshort{
124,
257,
388,
599,
8{10}{10}}
\]
which no longer contains a generalized antidiagonal of length $4$. Let $\prod_{i=1}^{3}\aa^{(i)}$ be the monomial given by this tableau where $\aa^{(i)}$ corresponds to the $i$-th column.
We also show $\bb^{(0)},\cdots,\bb^{(4)}$ in the following tableau by following the algorithm given in the $(\implies)$ direction of the proof of Theorem~\ref{thm:initial-ideal}, where $\bb^{(i)}$ corresponds to the $i+1$-th column
\[
\ytableausetup{centertableaux}
\yshort{
11111,
22222,
33447,
45578,
58889,
9999{10},
{10}{10}{10}{10}{11}}
\]
Then $v^{(3)}=4789\underline{10}12356\underline{11}$, $v^{(2)}=2589\underline{10}14367\underline{11}$, and $v^{(1)}=123589\underline{10}467\underline{11}$.
\end{example}

\subsection{Standard monomials of arbitrary positroid varieties}\label{subsec: B(k,n,d)}

\begin{prop}
\label{prop:unique-min}
    Let $\mm:=\prod_{i=1}^d[\aa^{(i)}]\in B(k,n,d)$, where $\aa^{(1)}\le \cdots \le \aa^{(d)}$. Then there exists a unique minimal positroid variety $\Pi_f$ such that $\mm\in B_f(k,n,d)$. In other words, if $\Pi_{f'}$ is a positroid variety such that $\mm \in B_{f'}(k,n,d)$, then $\Pi_f\subseteq \Pi_{f'}$.
\end{prop}

\begin{proof}
Let $v^{(1)}$ be the $k$-\emph{anti}-Grassmannian permutation (meaning $v^{(1)}$ has a unique ascent at position $k$) such that $v^{(1)}([k])=\{\aa^{(1)}_1,\cdots,\aa^{(1)}_k\}$.
By \cite{KLS2014projections}*{Proposition~2.1}, among all permutations \(w\in\symm_n\) with \(w([k])=\{\aa^{(i)}_1,\dots,\aa^{(i)}_k\}\) and \(w\ge v^{(i-1)}\) (Bruhat order), there is a unique minimal one; we define \(v^{(i)}\) to be this permutation.
Let $\Pi_f$ be the positroid variety $\Pi_{v^{(1)}}^{v^{(d)}}$.

Now suppose $\mm\in B_{f'}(k,n,d)$ and $\Pi_{f'}=\Pi_{v'}^{u'}$. By Proposition~\ref{prop:chainlift}, there exist permutations $w^{(1)},\cdots, w^{(d)}$ such that $v'\le w^{(1)}\le \cdots \le w^{(k)} \le u'$ and $w^{(i)}([k])=\{\aa^{(i)}_1,\cdots,\aa^{(i)}_k\}$ for all $1\le i\le d$. Naturally, $\mm$ is a standard monomial for $\Pi_{w^{(1)}}^{w^{(k)}}\subseteq \Pi_{f'}$. By \cite{KLS2014projections}*{Proposition 3.3}, we may without loss of generality assume $w^{(1)}$ is anti-Grassmannian; namely $w^{(1)}=v^{(1)}$.
We now argue by induction that $w^{(i)}\ge v^{(i)}$ for each $1\le i\le d$. Since $w^{(i)}\ge w^{(i-1)}\ge v^{(i-1)}$ by the induction hypothesis, and  $v^{(i)}([k])=w^{(i)}([k])=\{\aa^{(i)}_1,\cdots,\aa^{(i)}_k\}$, we must have $v^{(i)}\le w^{(i)}$ by the construction of $v^{(i)}$. Therefore, $\Pi_{f'}\supseteq \Pi_{w^{(1)}}^{w^{(i)}}\supseteq \Pi_f$.
\end{proof}

\begin{theorem}
\label{thm:std-mon-intersect}
Let \(\Pi_f\) be a positroid variety, and write it as an intersection of basic positroid varieties
\(\Pi_f=\bigcap_i \Pi_{S_i\le r_i}\) as in \eqref{eqn:basic-int}.
For each \(i\), let \(f_i\in \mathrm{Bound}(k,n)\) satisfy \(\Pi_{f_i}=\Pi_{S_i\le r_i}\).
Then $B_f(k,n,d)=\bigcap_i B_{f_i}(k,n,d)$.
\end{theorem}
\begin{proof}
    First notice that whenever $\Pi_f\subseteq \Pi_{f'}$, we have $B_f(k,n,d)\subseteq B_{f'}(k,n,d)$. Indeed, Since $\cJ_f\supseteq \cJ_{f'}$, we have $\inn_\omega(\cJ_f)\supseteq \inn_\omega(\cJ_{f'})$, so $B_f(k,n,d)\subseteq B_{f'}(k,n,d)$. It follows that 
$B_f(k,n,d)\subseteq\bigcap_i B_{f_i}(k,n,d)$.
    
    We now show that if $\mm \in \bigcap_i B_{f_i}(k,n,d)$, then $\mm\in B_f(k,n,d)$. By Proposition~\ref{prop:unique-min}, there is a unique minimum positroid variety $\Pi_g$ such that $\mm\in B_g(k,n,d)$ and $\Pi_g\subseteq \Pi_{f_i}$ for each $i$. Therefore $\Pi_g\subseteq \Pi_f$.  It follows that $\mm\in B_f(k,n,d)$.
\end{proof}

\begin{example}[Standard monomials of a positroid variety] Let $f=[5,2,4,7,9,12]$. Then $\Pi_f=\Pi_{[2]\le 0}\cap \Pi_{[2,4]\le 1} \cap \Pi_{[1,5]\le 2}$. Equipped with Theorem~\ref{thm:initial-ideal}, we are ready to describe the Stanley-Reisner complex, denoted $\Delta(\Pi_f)$, of the Hodge degeneration of this positroid variety directly, instead of considering it as a projection of an order complex in the Bruhat order.  The vertices of $\Delta(\Pi_f)$ are labeled by the Pl\"{u}cker coordinates $[136],[146],[156],[356],[456]$. Notice that these form a chain in the Pl\"ucker poset, but they do not form a face of $\Delta(\Pi_f)$ since $\scalebox{0.7}{\ytableausetup{centertableaux}
\yshort{1{\rr{3}}, {\rr{4}}5,66}}$ contains a generalized antidiagonal for $\Pi_{[2,4]\le 1}$. The facets of $\Delta(\Pi_f)$ correspond to the monomials
\[\ytableausetup{centertableaux}
\yshort{1114,3455,6666}
\text{ \ \ and \ \ }
\yshort{1134,3555,6666}
,\]
each of dimension 3.
\end{example}

\section{Gr\"{o}bner bases for  positroid varieties}\label{section: gb-equations}

The goal of this section is to give explicit equations for the Gr\"{o}bner basis of a positroid variety inside $\Gr(k,n)$.
For this section, assume that $\k$ is an algebraically closed field such that $\textrm{char}(\k) = 0$ or $\textrm{char}(\k) > k+1$.

Fix a cyclic interval $S\subseteq [n]$ and an integer $r\geq 0$.
Recall that \(\Pi_{S\le r}\subset \gr(k,n)\) denotes the basic positroid variety defined by the rank condition on \(S\),
and \(\cJ_{S\le r}\subset R(k,n)\) is its defining ideal under the Pl\"ucker embedding.

Let $T=(T_i^j)$ be a $k\times d$ tableau with entries in $[n]$, with no assumptions on row/column order or repetitions.
Write $T^j=(T_1^j,\dots,T_k^j)\in[n]^k$ for the $j$th column, and set
\[
\mm_T:=\prod_{j=1}^d [T^j]\in R(k,n).
\]
Assume we have chosen a subset of positions
\[
D\subseteq \{(i,j)\in [k]\times[d] : T_i^j\in S\}
\qquad\text{with}\qquad |D|=r+1.
\]
Let $\symm_D\cong \symm_{r+1}$ be the symmetric group on the set of positions $D$.
It acts on tableaux by permuting the entries in the positions of $D$ and leaving all other entries fixed; equivalently, each $\pi\in\symm_D$ sends $T$ to the tableau $\pi\cdot T$ defined by
\[
(\pi\cdot T)_{i}^{j}=
\begin{cases}
T_{\pi^{-1}(i,j)} & \text{if }(i,j)\in D,\\
T_i^j & \text{if }(i,j)\notin D.
\end{cases}
\]
In other words, \(\pi\in\symm_D\) permutes the entries of \(T\) among the chosen positions \(D\),
and fixes all entries outside \(D\).
This gives an embedding $\symm_D\hookrightarrow \symm_T\cong \symm_{kd}$, where $\symm_T$ denotes the permutation group of the $kd$ positions of $T$.

For each column $1\le j\le d$, define
\[
\lambda_j := |\{\, i\in[k] : (i,j)\in D \,\}|,
\]
so that $\lambda_j$ records how many of the selected positions lie in column $j$.

\begin{prop}\label{prop: positroid-equations}
The polynomial $\mathbf{p} = \sum_{w\in \symm_D} \sgn(w) (\mm_{w\cdot T})$ is in the ideal $\cJ_{S\leq r}$. 
\end{prop}

We demonstrate what equations of these types look like in the following example.

\begin{example}\label{ex: gb-relations} Let $k=4$, $n=11$, $S = [3,7]$, and $r = 2$. The following degree 2 polynomial is in $\cJ_{S\leq r}$, where $D=\{(1,2),(2,1),(3,1)\}$:
\begin{equation}\label{eq: example-partition-1-2}\scalebox{0.9}{
\ytableausetup{centertableaux}
\yshort{1{\rr{3}},{\rr{5}}5,{\rr{7}}9,{10}{11}} \; - \; \yshort{1{\rr{5}},{\rr{3}}5,{\rr{7}}9,{10}{11}} \; -\; \yshort{1{\rr{3}},{\rr{7}}5,{\rr{5}}9,{10}{11}} \; + \; \yshort{1{\rr{7}},{\rr{3}}5,{\rr{5}}9,{10}{11}}\; +\; 
\yshort{1{\rr{5}},{\rr{7}}5,{\rr{3}}9,{10}{11}}\;-\;
\yshort{1{\rr{7}},{\rr{5}}5,{\rr{3}}9,{10}{11}}.}
\end{equation}
which simplifies to 
\begin{equation*}
    2\,[1,5,7,10]\cdot [3,5,9,11]-2\,[1,3,5,10]\cdot [5,7,9,11].
\end{equation*}

We give another example where the tableau $T$ (and any $w\cdot T$) is not semistandard. Here $D = \{(1,2),(2,1),(3,3)\}$. The following degree 3 polynomial is in $\cJ_{S\leq r}$:
\begin{gather*}\scalebox{0.9}{
\ytableausetup{centertableaux}
\yshort{2{\rr{5}}9,{\rr{7}}1{4},9{10}{\rr{3}},{11}8{2}}
 \; - \; 
\yshort{2{\rr{3}}9,{\rr{7}}1{4},9{10}{\rr{5}},{11}8{2}} \; - \; 
\yshort{2{\rr{7}}9,{\rr{5}}1{4},9{10}{\rr{3}},{11}8{2}} \;  + \;
\yshort{2{\rr{3}}9,{\rr{5}}1{4},9{10}{\rr{7}},{11}8{2}} \; + \;
\yshort{2{\rr{7}}9,{\rr{3}}1{4},9{10}{\rr{5}},{11}8{2}}\; - \;
\yshort{2{\rr{5}}9,{\rr{3}}1{4},9{10}{\rr{7}},{11}8{2}}.
}
\end{gather*}
\end{example}

Before proving \Cref{prop: positroid-equations}, we need the following Lemma.

\begin{lemma}[\cite{FultonYT}*{Lemma~1 of Chapter~9.1}]\label{lem: unsigned-plucker-relation}
Fix \(1\le q\le k\). For an index set \(I=\{i_1<\cdots<i_{k-q}\}\subset [k]\), let
\([d(I)]\) denote the \(k\)-tuple obtained from \([d_1,\dots,d_k]\) by replacing
\(d_{i_1},\dots,d_{i_{k-q}}\) (in these positions) with \(c_{q+1},\dots,c_k\) in this order.
Then in \(\k[\gr(k,n)]\) one has
\[
[c_1,\dots,c_k]\,[d_1,\dots,d_k]
=
\sum_{I\in \binom{[k]}{k-q}}
[c_1,\dots,c_q,d_{i_1},\dots,d_{i_{k-q}}]\,[d(I)].
\]
\end{lemma}

\begin{proof}[Proof of Proposition~\ref{prop: positroid-equations}] We
proceed by double induction on $d$, the degree of the relation, and $\lambda_d$.
\vskip 0.5em

\noindent \underline{\textbf{Base case: $d = 1$}.} This is immediate from Corollary~\ref{cor:vanishplucker}.
\vskip 0.5em

\noindent \underline{\textbf{Inductive step on $d$}.} Assume that the proposition holds in $d-1$. We will proceed by induction on $\lambda_d$.
\vskip 0.5em

\underline{\textbf{Base case: $\lambda_d = 0$.}} This means that the entries in $D$ occupies at most $d-1$ columns and that every monomial in $\mathbf{p}$ is divisible by some Pl\"ucker variable $[T^d]$. By inductive hypothesis, $\mathbf{p}/[T^d]\in \cJ_{S\leq r}$ and thus $\mathbf{p}\in \cJ_{S\leq r}$. 
\vskip 0.5em

\underline{\textbf{Inductive step on $\lambda_d$.}} Assume the desired relation holds for $\lambda_d < \ell$.   We choose a fixed position $(i_0,d)\in D$. Let $I:=\{i_1,\cdots,i_{\lambda_{d-1}}\}$ be indices such that $(i_y,d-1)\in D$ for $1\le y \le \lambda_{d-1}$. 

Apply the relation in Lemma \ref{lem: unsigned-plucker-relation} in the case where $q=k-1$ with variables given by the last two columns of each $w\cdot T$, so that the $i_0$-th element in column $d$ of each monomial is swapped with each element in column $d-1$. Let $\tau_i\in \symm_T$ denote the transposition that swaps the fixed element with the $i$th element in column $d-1$.
Then we have
\begin{align*}
    \mathbf{p} &=  \sum_{w\in \symm_D} \sgn(w) (\mm_{w\cdot T}) \\
    &= \sum_{i=1}^k \sum_{ w\in \symm_D} \sgn(w) (\mm_{\tau_i \cdot ( w\cdot T)})  \\
    &=\sum_{i\not\in I} \sum_{ w\in \symm_D} \sgn(w) (\mm_{\tau_i \cdot ( w\cdot T)}) +
    \sum_{i\in I} \sum_{ w\in \symm_D} \sgn(w) (\mm_{\tau_i \cdot ( w\cdot T)}) 
\end{align*}
Consider the first sum. For $i\not\in I$, let $D_i:=D\cup\{(i,d-1)\}\setminus\{(i_0,d)\}$. 
Then
\begin{align*}
    \sum_{i\not\in I} \sum_{ w\in \symm_D} \sgn(w) (\mm_{\tau_i \cdot ( w\cdot T)}) 
    &= \sum_{i\not\in I}\sum_{w'\in\symm_{D_i}} \sgn(\tau_i w' \tau_i)(\mm_{\tau_i\cdot (\tau_i w'\tau_i) \cdot T)}) \\
    &=\sum_{i\not\in I}\sum_{w'\in\symm_{D_i}} \sgn(w')(\mm_{w'\cdot (\tau_i \cdot T)})
\end{align*}
By the induction hypothesis on $\lambda_d$, this sum is in $\cJ_{S\le r}$. 

For the second sum, notice that for $i\in I$, $\tau_iw\in \symm_D$. It follows then
\begin{align*}
    \sum_{i\in I} \sum_{ w\in \symm_D} \sgn(w) (\mm_{\tau_i \cdot ( w\cdot T)}) 
    =&\sum_{i\in I} \sum_{ \tau_i w\in \symm_D} \sgn(w) (\mm_{\tau_i \cdot ( w\cdot T)}) \\
    =&\sum_{i\in I} \sum_{ \tau_i w\in \symm_D} -\sgn(\tau_i w) (\mm_{\tau_i \cdot ( w\cdot T)}) \\
    =&-\lambda_{d-1} \mathbf{p}.
\end{align*}
Hence \((1+\lambda_{d-1})\mathbf p\in \cJ_{S\le r}\), and by the characteristic assumption \(\mathbf p\in \cJ_{S\le r}\).
\end{proof}

\begin{remark}
Our assumption that the characteristic of the underlying field is either $0$ or at least $k+1$ stems from the fact that in the proof above, we require that $(1+\lambda_{d-1})$ be invertible. Since $\lambda_j$ is at most $k$ (the number of rows), it suffices to assume that the characteristic of the field is at least $k+1$.
\end{remark}

\begin{example}
We demonstrate the idea of the proof above using Example \ref{ex: gb-relations}. Let

\begin{equation*}\mathbf{p} = \scalebox{0.9}{
\ytableausetup{centertableaux}
\yshort{1{\rr{3}},{\rr{5}}5,{\rr{7}}9,{10}{11}} \; - \; \yshort{1{\rr{5}},{\rr{3}}5,{\rr{7}}9,{10}{11}} \; -\; \yshort{1{\rr{3}},{\rr{7}}5,{\rr{5}}9,{10}{11}} \; + \; \yshort{1{\rr{7}},{\rr{3}}5,{\rr{5}}9,{10}{11}}\; +\; 
\yshort{1{\rr{5}},{\rr{7}}5,{\rr{3}}9,{10}{11}}\;-\;
\yshort{1{\rr{7}},{\rr{5}}5,{\rr{3}}9,{10}{11}}.}
\end{equation*}
Applying Lemma \ref{lem: unsigned-plucker-relation} to the red element in the rightmost column of each tableau, we see that $\mathbf{p}$ equals

\begin{gather*}
     \;
\yshort{{\rr{3}}{1},{\rr{5}}5,{\rr{7}}9,{10}{11}} \; - \; \yshort{{\rr{5}}1,{\rr{3}}5,{\rr{7}}9,{10}{11}} \; -\; \yshort{{\rr{3}}1,{\rr{7}}5,{\rr{5}}9,{10}{11}} \; + \; \yshort{{\rr{7}}1,{\rr{3}}5,{\rr{5}}9,{10}{11}}\; +\; 
\yshort{{\rr{5}}1,{\rr{7}}5,{\rr{3}}9,{10}{11}}\;-\;
\yshort{{\rr{7}}1,{\rr{5}}5,{\rr{3}}9,{10}{11}}\\
+\; \left( \;
\yshort{1{\rr{5}},{\rr{3}}5,{\rr{7}}9,{10}{11}} \; - \; \yshort{1{\rr{3}},{\rr{5}}5,{\rr{7}}9,{10}{11}} \; -\; \yshort{1{\rr{7}},{\rr{3}}5,{\rr{5}}9,{10}{11}} \; + \; \yshort{1{\rr{3}},{\rr{7}}5,{\rr{5}}9,{10}{11}}\; +\; 
\yshort{1{\rr{7}},{\rr{5}}5,{\rr{3}}9,{10}{11}}\;-\;
\yshort{1{\rr{5}},{\rr{7}}5,{\rr{3}}9,{10}{11}}\;\right)\\
+\;\left(\; 
\yshort{1{\rr{7}},{\rr{5}}5,{\rr{3}}9,{10}{11}} \; - \; \yshort{1{\rr{7}},{\rr{3}}5,{\rr{5}}9,{10}{11}} \; -\; \yshort{1{\rr{5}},{\rr{7}}5,{\rr{3}}9,{10}{11}} \; + \; \yshort{1{\rr{5}},{\rr{3}}5,{\rr{7}}9,{10}{11}}\; +\; 
\yshort{1{\rr{3}},{\rr{7}}5,{\rr{5}}9,{10}{11}}\;-\;
\yshort{1{\rr{3}},{\rr{5}}5,{\rr{7}}9,{10}{11}}\; \right)\\
+\; \left( \;
\yshort{1{10},{\rr{5}}5,{\rr{7}}9,{\rr{3}}{11}} \; - \; \yshort{1{10},{\rr{3}}5,{\rr{7}}9,{\rr{5}}{11}} \; -\; \yshort{1{10},{\rr{7}}5,{\rr{5}}9,{\rr{3}}{11}} \; + \; \yshort{1{10},{\rr{3}}5,{\rr{5}}9,{\rr{7}}{11}}\; +\; 
\yshort{1{10},{\rr{7}}5,{\rr{3}}9,{\rr{5}}{11}}\;-\;
\yshort{1{10},{\rr{5}}5,{\rr{3}}9,{\rr{7}}{11}}\;\right),
\end{gather*}
where the sum of the $i$-th terms in each grouping equals the $i$-th term in $\mathbf{p}$.
Notice that the monomials in each group either 
\begin{itemize}
    \item all have a column containing $3$ elements in [3,7], or
    \item all sum to $-\mathbf{p}$.
\end{itemize}
We can then conclude that $3\mathbf{p}\in\cJ_{[3,7]\leq 2}$, and thus $\mathbf{p}\in \cJ_{[3,7]\leq 2}$.

\end{example}

\begin{lemma}\label{lem:mingens-basic}
Let \(S=[\alpha+1,\alpha+m]\subset[n]\) and \(r<m\).
The minimal monomial generators of \(\inn_\omega(\cJ_{S\le r})\) consist of:
\begin{enumerate}[(a)]
\item the quadratic monomials \([\aa][\bb]\) with \(\aa,\bb\) incomparable in the Pl\"ucker poset \(\mathcal P\); and
\item the monomials \(\mm_T\) with \(T\in B(k,n,d)\) that minimally contain a generalized antidiagonal for \(S\le r\)
(in the sense of Definition~\ref{def:antidiag}).
\end{enumerate}
Moreover, no monomial in (a) divides a monomial in (b), and the monomials in (b) are pairwise non-dividing.
\end{lemma}

\begin{proof}
By Theorem~\ref{thm:grgrobner}, \(\inn_\omega(\cJ)\) is generated by the monomials in (a),
and these are minimal since they all have degree \(2\).
Now let \(\mm\) be a standard monomial for \(\gr(k,n)\), so \(\mm\notin \inn_\omega(\cJ)\).
By Theorem~\ref{thm:initial-ideal}, \(\mm\in \inn_\omega(\cJ_{S\le r})\) if and only if the tableau of \(\mm\)
contains a generalized antidiagonal for \(S\le r\).
If \(\mm\) is such a monomial, delete columns one at a time until no further deletion is possible while remaining in
\(\inn_\omega(\cJ_{S\le r})\); the resulting divisor \(\mm_T\) is minimal and corresponds exactly to a tableau \(T\)
minimally containing a generalized antidiagonal, giving (b).
Finally, a monomial in (b) is a product of Pl\"ucker variables forming a chain in \(\mathcal P\), hence it is not divisible by any
incomparable product \([\aa][\bb]\) from (a).
\end{proof}

We are now ready to give explicit equations for the Gr\"{o}bner basis of $\cJ_{S\leq r}$ where $S$ is an interval. Recall that \(<_\omega\) is the term order induced by a linear extension of the Pl\"ucker poset, as in Section~\ref{section: hodge-algebra-background}. 

\begin{theorem}\label{thm: gb-basic-positroid} 
The following polynomials form a minimal Gr\"{o}bner basis of $\cJ_{S\leq r}$ under the term order $<_\omega$.

\begin{enumerate}[(i)]
    \item straightening relations as in Definition~\ref{def:Pluckerrelations};
\item polynomials as in Proposition~\ref{prop: positroid-equations} where
\(T\in B(k,n,d)\) (Section~\ref{section: initial-ideal}) minimally contains a generalized antidiagonal
\(D\) for \(S\le r\) (Definition~\ref{def:antidiag}).
\end{enumerate}
\end{theorem}

\begin{proof}
By Theorem~\ref{thm:grgrobner} and Proposition~\ref{prop: positroid-equations}, both classes of polynomials are in the ideal $\cJ_{S\leq r}$.
By Theorem~\ref{thm:initial-ideal}, it remains to check that $\mm_T$ is the leading term in $\sum_{w\in \symm_D} \sgn(w)(\mm_{w\cdot T})$ for any $T\in B(k,n,d)$ containing a generalized antidiagonal $D$. 

Observe that if $w$ only permutes entries within each column of $T$, then $\sgn(w)\mm_{w\cdot T} = \mm_{T}$. Otherwise, let $j$ be the smallest column index such that $(w\cdot T)^j$ is different, as a multiset, from $T^j$. Since $D$ is a generalized antidiagonal, $T_{i}^j>T_{i'}^{j'}$ for all $(i,j),(i',j')\in D$ such that $j<j'$. Therefore 
either $[(w\cdot T)^j]<[T^j]$ in $\cP$ or $[(w\cdot T)^j] = 0$. Since $<_\omega$ is revlex order, $\mm_T>_\omega \mm_{w\cdot T}$. Therefore $\mm_T$ is the leading term of $\sum_{w\in \symm_D}\sgn(w)(\mm_{w\cdot T})$. 

Minimality now follows from Lemma~\ref{lem:mingens-basic}:
the leading monomials of the polynomials in (i) and (ii) are exactly the minimal monomial generators of
\(\inn_\omega(\cJ_{S\le r})\), so no element of the Gr\"obner basis can be removed.
\end{proof}

Similar to the standard monomials, we can obtain Gr\"obner basis of $\cJ_{S^\vee\leq r^\vee}$ from Gr\"obner basis of $\cJ_{S\leq r}$. 

\begin{prop}
    \label{prop:dual-gb}
    $\mathrm{gb}(\cJ_{S\le r})$ is a Gr\"obner basis of $\cJ_{S\le r}$ if and only if $\{g^\vee: g\in \mathrm{gb}(\cJ_{S\le r})\}$ is a Gr\"obner basis for $\cJ_{S^\vee\le r^\vee}$.
\end{prop}

\begin{proof}
We assume, without loss of generality, that $S$ is an interval.
Let $g = \sum_{w\in \symm_D}\sgn(w)(\mm_{w\cdot T})\in \mathrm{gb}(\cJ_{S\le r})$, then $g^\vee \in \cJ_{S^\vee\le r^\vee}$ by Proposition~\ref{prop:basic-dual}. We first show that the leading term of $g^\vee$ is $\mm_{T}^\vee$. For any $w\in \symm_{D}$, by the proof of Theorem~\ref{thm: gb-basic-positroid}, we have
$\mm_T>_\omega \mm_{w\cdot T}$. 

Let $j$ be the largest column index such that ${T}^j$ is different from $(w\cdot T)^j$ as a multi-set. Since $w\in \symm_D$ and no entries in column $j+1$ through $d$ is permuted into column $j$, we have either $[(w\cdot T)^j]  = 0$ or $[(w\cdot T)^j] \geq [T^j]$ in $\cP$. The latter is equivalent to $[(T^j)^\vee] \geq [((w\cdot T)^j)^\vee]$. Since $<_\omega$ is revlex order, we conclude that 
\[\mm_T^\vee >_\omega \mm_{w\cdot T}^\vee,\]
and thus $\inn_\omega(g^\vee) = \mm_T^\vee$. By Proposition~\ref{prop:init-dual}, the set of $\mm_T^\vee$ generate the initial ideal $\inn_\omega(\cJ_{S^\vee\leq r^\vee})$. Therefore $\{g^\vee: g\in \mathrm{gb}(\cJ_{S\le r})\}$ forms a Gr\"obner basis.
\end{proof}

\begin{example}
    Suppose $n=5$, $k=2$, and consider $\Pi_{[5,1]^\circ \le 1}$. Then $\varphi(\Pi_{[5,1]^\circ \le 1})=\Pi_{[2,4]\le 2}\subset \gr(3,5)$. There is a degree 3 element of $\mathrm{gb}(\cJ_{[2,4]\le 2})$:
    \[\ytableausetup{centertableaux}
\yshort{11{\rr{2}},2{\rr{3}}4,{\rr{4}}55} - \yshort{11{\rr 2},2{\rr 4}4,{\rr 3}55} - \yshort{11{\rr 3},2{\rr 2}4,{\rr 4}55}
 .\] 
 Therefore, there is a degree 3 generator of $\mathrm{gb}(\cJ_{[5,1]^\circ\le 1})$:
 \[\ytableausetup{centertableaux}
\yshort{123,345} - \yshort{124,335} - \yshort{133,245}
\; .\]
\end{example}

Finally, we have the following statement for the Gr\"obner basis of an arbitrary positroid variety. For a basic positroid ideal \(\cJ_{S\le r}\), let \(\mathrm{gb}(\cJ_{S\le r})\) denote the Gr\"obner basis described in Theorem~\ref{thm: gb-basic-positroid}.

\begin{theorem}\label{thm:concat-gb}
Assume \(\Pi_f=\bigcap_{(i,j)\in ess(f)} \Pi_{[i,j]^\circ\le r(f)_{i,j}}\) as in \eqref{eqn:basic-int},
and for each \((i,j)\in ess(f)\) let \(f_{i,j}\in \bound(k,n)\) satisfy
\(\Pi_{f_{i,j}}=\Pi_{[i,j]^\circ\le r(f)_{i,j}}\).
Then $G:=\bigcup_i \mathrm{gb}(\cJ_{f_i})$ is a Gr\"obner basis for $\cJ_f$.
\end{theorem}

\begin{proof}
By Theorem~\ref{thm:std-mon-intersect} we have
$\inn_\omega(\cJ_f)=\sum_i \inn_\omega(\cJ_{f_i})$.
Since each $\mathrm{gb}(\cJ_{f_i})$ is a Gr\"obner basis for $\cJ_{f_i}$, we have
$\inn_\omega(\cJ_{f_i})=\langle \inn_\omega(g): g\in \mathrm{gb}(\cJ_{f_i})\rangle$.
Therefore
\[
\inn_\omega(\cJ_f)
=\sum_i \langle \inn_\omega(g): g\in \mathrm{gb}(\cJ_{f_i})\rangle
=\langle \inn_\omega(g): g\in G\rangle,
\]
so $G$ is a Gr\"obner basis for $\cJ_f$.

\end{proof}

\begin{remark}
    In \cite{knutson2009frobenius} there is a general result proven about concatenating Gr\"obner bases, based on degeneration of splittings, where both splittings are on the same affine space. Based on discussions with Knutson, we expect that there should be a generalization in which the ambient space degenerates too (as in the Hodge degeneration of the Grassmannian) that would produce our Theorem \ref{thm:concat-gb} about concatenation.
\end{remark}

\section{Promotion and evacuation on standard monomials}
\label{sec:promotion}

\subsection{Promotion}
\label{subsec:promotion}
Let \textbf{promotion} be the map $\mathrm{prom}:B(k,n,d)\longrightarrow B(k,n,d)$ defined as follows:
\begin{itemize}
    \item[-] If $\mm\in B(k,n,d)$ does not contain $n$, then increase each entry of $\mm$ by $1$.
    \item[-] If $\mm$ contains $n$, replace each $n$ with $\bullet$ and perform the following \textbf{jeu de taquin (jdt)} slide:
    \begin{align*}
        \ytableausetup{boxsize=1em}
        \begin{ytableau}
            a & c\\
            b & \bullet
        \end{ytableau}\longrightarrow 
        \begin{cases}
            \ytableausetup{boxsize=1em}
            \begin{ytableau}
            a & \bullet\\
            b & c
        \end{ytableau}& \text{ if }b\leq c \text{ or }a,b \text{ do not exist (i.e. $\bullet$ is in the leftmost column)} ,\vspace{0.5cm}\\
        \ytableausetup{boxsize=1em}
            \begin{ytableau}
            a & c\\
            \bullet & b
        \end{ytableau}& \text{ if }b> c\text{ or }a,c \text{ do not exist (i.e. $\bullet$ is in the top row)}
        \end{cases}
    \end{align*}
    until all \(\bullet\) entries form a Young diagram in the northwest corner (i.e.\ they occupy the leftmost boxes of the first row, then the leftmost boxes of the second row, etc.). Replace each $\bullet$ with $0$ and increase all entries by $1$. 
\end{itemize}
When performing promotion on a rectangular tableau with multiple \(\bullet\) entries,
we fix a deterministic order of jeu de taquin slides as in \cite{bloom2016proofs}:
we always slide into the leftmost available \(\bullet\) (breaking ties top-to-bottom).
See \cite{bloom2016proofs}*{Section 2} for an in-depth description of promotion on semistandard tableaux. 

\begin{theorem}\label{thm:promotion}
Let \(f\in \mathrm{Bound}(k,n)\) and \(d\ge 0\).
Write \(B_f(k,n,d)\subseteq B(k,n,d)\) for the degree-\(d\) standard monomials of \(\Pi_f\),
i.e.\ those \(\mm\in B(k,n,d)\) with \(\mm\notin \inn_\omega(\cJ_f)\) (Section~\ref{section: initial-ideal}).
Let \(\chi:\mathrm{Bound}(k,n)\to \mathrm{Bound}(k,n)\) be the cyclic shift
\(\chi(f)(i)=f(i-1)+1\) (Section~\ref{section: positroid-background}).
Then for any \(\mm\in B(k,n,d)\),
\[
\mm\in B_f(k,n,d)\quad \Longleftrightarrow \quad \mathrm{prom}(\mm)\in B_{\chi(f)}(k,n,d).
\]
In particular, \(\mathrm{prom}\) restricts to a bijection \(B_f(k,n,d)\to B_{\chi(f)}(k,n,d)\).
\end{theorem}


In order to prove this statement, we reduce the statement to basic positroid varieties using the intersection description
of initial ideals from Theorem~\ref{thm:std-mon-intersect}.
For a basic cyclic interval rank condition \(S\le r\), Proposition~\ref{prop:basicprom}
shows that promotion carries \(\inn_\omega(\cJ_{S\le r})\) to \(\inn_\omega(\cJ_{\chi(S)\le r})\).
Combining these implications across the essential rank conditions of \(f\) yields the theorem.
The full proof appears after Proposition~\ref{prop:basicprom}.

\begin{example}\label{ex:promotion}
    Consider the basic positroid variety $\Pi_f = \Pi_{[2,4]\leq 2}\subset \gr(3,5)$. We have $\mm = [1,2,4]\cdot [2,3,5]\in \inn_\omega(\cJ_f)$ and $\mathrm{prom}(\mm) = [1,2,3]\cdot [3,4,5]\in \inn_\omega(\cJ_{\chi(f)})$. 
    Note that not only does promotion take a standard monomial of $\Pi_f$ to a standard monomial of $\Pi_{\chi(f)}$, but it also takes semistandard tableaux in the initial ideal $\inn_\omega(\cJ_f)$ to monomials in the ideal $\inn_\omega(\cJ_{\chi(f)})$. We demonstrate this process below:  \[\ytableausetup{centertableaux,boxsize = 1em}
\yshort{1{\rr{2}},2{\rr{3}},{\rr{4}}5}\rightarrow 
\yshort{1{2},2{3},4{\bullet}}\rightarrow \yshort{1{2},2{3},\bullet{4}}\rightarrow
\yshort{1{2},\bullet{3},2{4}}\rightarrow
\yshort{\bullet{2},1{3},2{4}}\rightarrow
\yshort{0{2},1{3},2{4}}\rightarrow
\yshort{1{\rr{3}},2{\rr{4}},3{\rr{5}}}.\]
\end{example}

\begin{lemma}\label{lemma:promcommute}
Let \(\varphi:\gr(k,n)\to \gr(n-k,n)\) be the complement identification from
Construction~\ref{const: complement-interval} (equivalently, \([\aa^\vee]\mapsto[\aa]\) in Pl\"ucker coordinates).
Then for any \(\mm\in B(k,n,d)\),
\[
\varphi(\mathrm{prom}(\mm))=\mathrm{prom}(\varphi(\mm)).
\]
\end{lemma}

\begin{proof}
    Let $T$ and $T^\vee$ be the semistandard tableaux corresponding to $\mm$ and $\mm^\vee$ with entries $n$ replaced with $\bullet$. Let $\mathrm{infusion}(T)$ and $\mathrm{infusion}(T^\vee)$ be the resulting tableaux after moving all $\bullet$ entries to the top left corner using jdt slides. 

     \noindent\textbf{Case \RNum{1}.} (No $\bullet$ in $T$): In this case, all entries in the bottom row of $T^\vee$ are $\bullet$. Therefore all jdt slides occurring in $T^\vee$ move $\bullet$ up but not to the left.
     Promotion simply adds one to all entries in $\mm$. For $\mm^\vee$, the effect of promotion is equivalent to adding one cyclically to each entry (so $n$ becomes 1) and then sorting within each column. As a result, $\varphi\circ \mathrm{prom}(\mm) = \mathrm{prom}(\mm^\vee)$ in this case.

     \noindent\textbf{Case \RNum{2}.} (No $\bullet$ in $T^\vee$):
     This follows formally from Case \RNum{1} by taking a dual.

     \noindent\textbf{Case \RNum{3}.} ($\bullet$ in both $T$ and $T^\vee$): 
    Let $\delta$ be the number of $\bullet$ in $T$. By assumption, $1\leq \delta \leq d-1$ and there are $(d-\delta)$ $\bullet$ in $T^\vee$. By definition, the set of (non-$\bullet$) entries in column $i$ of $T$ is disjoint from the set of entries in column $d+1-i$ of $T^\vee$. We will show that this property still holds for $\mathrm{infusion}(T)$ and $\mathrm{infusion}(T^\vee)$.
    
    We use the following two different orders of jdt slides for $T$ and $T^\vee$:
    \begin{itemize}
        \item[-] For $T$, we always slide into the $\bullet$ that is in the leftmost column. Equivalently, we first move the $\bullet$ in column $d-\delta+1$ to position $(1,1)$, then the $\bullet$ in column $d-\delta+2$ to position $(1,2)$ and so on. 
        \item[-] For $T^\vee$, we move each $\bullet$, from left to right, until it changes column. When all $\bullet$ have changed columns, we start with the leftmost $\bullet$ again and repeat the process until all $\bullet$ end up in the first row and column $1$ through $\delta$.
    \end{itemize}
    Notice that there are exactly $(\delta d-\delta^2)$ jdt slides that change the column index of $\bullet$ in both $T$ and $T^\vee$. Moreover, for any $i\in [\delta d-\delta^2]$, the $\bullet$ in $T$ moved from column $j+1$ to $j$ in the $i$-th slide if and only if the $\bullet$ in $T^\vee$ moved from column $d+1-j$ to $d-j$.
    
    Let $T_i$ and $T^{\vee}_i$ be the tableaux after the $i$-th such jdt slides. 
    \begin{claim}\label{claim:jdt}
        For all $j\in [d]$, the set of entries in column $j$ of $T_i$ is disjoint from the set of entries in column $d+1-j$ of $T_i^\vee$.
    \end{claim}
     \begin{proof}
    We proceed by induction on $i$. For $i = 0$, $T_0 = T$ and $T^\vee_0 = T^\vee$. The sets of entries are disjoint by definition. Suppose the statement holds for $i-1$. Let $j$ be the column index such that $\bullet$ moved from column $j+1$ to $j$ in $T$. 
    Let $\alpha$ and $\beta$ be the entry that are switched with $\bullet$ in $T_{i-1}$ and $T^\vee_{i-1}$ respectively. Let $C_1,C_2$ be the sets of entries in column $j$ and $j+1$ in $T_{i-1}$ respectively. Then 
       \begin{equation}\label{eqn:alpha}
           \alpha = \max\{a\in [n]: |C_1\cap[a,n]| > |C_2\cap[a,n]|\}.
       \end{equation}
       Similarly, since $C_1^\vee:=[n]\setminus C_1$ and $C_2^\vee:=[n]\setminus C_2$ are the sets of entries in column $d-j$ and $d+1-j$ in $T^\vee_{i-1}$, we have
       \begin{equation}\label{eqn:beta}
           \beta = \max\{b\in [n]: |C_2^\vee\cap[b,n]| > |C_1^\vee\cap[b,n]|\}.
       \end{equation}
       Since
       \[|C_1\cap [a,n]| + |C_1^\vee \cap [a,n]| =|C_2\cap [a,n]| + |C_2^\vee \cap [a,n]|,\]
       it is easy to see that the right-hand side of \eqref{eqn:alpha} and \eqref{eqn:beta} agrees. Therefore $\alpha = \beta$ and thus Claim holds for $i$. 
    \end{proof}
    By Claim~\ref{claim:jdt}, the set of entries in column $j$ of $T_{\delta d-\delta^2}$ is disjoint from entries in column $d+1-j$ of $T^\vee_{\delta d-\delta^2}$. Since $T_{\delta d-\delta^2}$ and $\mathrm{infusion}(T)$ have the same set of entries for each column and so do $T^\vee_{\delta d-\delta^2}$ and $\mathrm{infusion}(T^\vee)$, we are done.
\end{proof}

\begin{example}
We demonstrate Case \RNum{1} and \RNum{3} of Lemma~\ref{lemma:promcommute} in $\gr(3,5)$. The following two diagrams commute:
\[\text{Case \RNum{1}:}
\begin{tikzcd}[row sep=0.6cm,column sep=1cm]
\yshort{1{2},2{3},4{4}}  \arrow[r,"\text{prom}"] \arrow[d,"\varphi"] & \yshort{2{3},3{4},5{5}}\arrow[d,"\varphi"]\\
\yshort{1{3},5{5}} \arrow[r,"\text{prom}"] & \yshort{1{1},2{4}}
\end{tikzcd}
\qquad
\text{Case \RNum{3}:}
\begin{tikzcd}[row sep=0.6cm,column sep=1cm]
\yshort{1{2},2{3},{4}5}  \arrow[r,"\text{prom}"] \arrow[d,"\varphi"] & \yshort{1{3},2{4},3{5}}\arrow[d,"\varphi"]\\
\yshort{1{3},4{5}} \arrow[r,"\text{prom}"] & \yshort{1{4},2{5}}
\end{tikzcd}.
\]
\end{example}

For any cyclic interval $S = [\alpha+1,\alpha+m]^\circ$, set $\chi(S):=[\alpha+2,\alpha+m+1]^\circ$. 

\begin{lemma}\label{lemma:jdtantidiag}
    For any interval $S\subset [n-1]$, a monomial $\mm\in \inn_\omega(\cJ_{S\leq r})$ if and only if $\mathrm{prom}(\mm) \in \inn_\omega(\cJ_{\chi(S)\leq r})$.
\end{lemma}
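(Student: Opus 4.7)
The plan is to split on whether the tableau $\mm$ contains the entry $n$. If $\mm$ contains no $n$, then $\mathrm{prom}(\mm)$ is obtained from $\mm$ by incrementing every entry by $1$ with all positions unchanged; since $S\subset[n-1]$, this shifts $S$ bijectively onto $\chi(S)=S+1$, so generalized antidiagonals in $S$ correspond, at the same positions, to generalized antidiagonals in $\chi(S)$, and the equivalence is immediate from Theorem~\ref{thm:initial-ideal}.

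For the case $n\in\mm$, I would decompose $\mathrm{prom}$ into its four constituent steps: (i) replace each $n$ by $\bullet$, (ii) perform a sequence of jdt slides $T_0\to T_1\to\cdots\to T_N$ until the bullets form a straight shape in the NW corner, (iii) replace each $\bullet$ by $0$, and (iv) increment every entry by $1$. The hypothesis $S\subset[n-1]$ ensures that no entry of $S$ is ever a bullet; dually, $\chi(S)\subset[2,n]$ gives $1\notin\chi(S)$, so bullets-reborn-as-$1$'s in $\mathrm{prom}(\mm)$ cannot appear in any $\chi(S)$-antidiagonal. It therefore suffices to prove the sub-claim that each individual jdt slide $T_i\to T_{i+1}$ preserves the existence of a length-$(r+1)$ generalized antidiagonal with non-bullet entries in $S$.

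To prove the sub-claim, I would analyze the two slide types. In Type A (upper entry $c$ at $(i-1,j+1)$ descends to $(i,j+1)$), if $c=x_h$ is part of the antidiagonal $A=(x_1<\cdots<x_\ell)$, the potential issue is the row-strict condition $\rho_{h+1}>i$; a hypothetical violation $\rho_{h+1}=i$ would place $x_{h+1}$ either at the bullet's old cell $(i,j+1)$ (impossible, since $x_{h+1}$ is a non-bullet value in $S$) or in row $i$ at column $\le j$, where row-weak monotonicity together with the Type A hypothesis $b\le c$ yields $x_{h+1}\le b\le c=x_h$, contradicting strict increase. In Type B ($b=x_h$ at $(i,j)$ shifts to $(i,j+1)$), if the column condition $\sigma_{\ell-h+2}>j$ fails so that $x_{h-1}$ also sits in column $j$, I would perform a local substitution using the entry $y$ at $(\rho_{h-1},j+1)$, which by row-weak and column-strict monotonicity satisfies $x_{h-1}\le y\le c<x_h$, hence $y\in S$; replacing $x_{h-1}$ in $A$ with the cell $(\rho_{h-1},j+1)$ restores the column condition. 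The reverse implication follows by symmetry, applying the analogous case analysis to the inverse jdt slide $T_{i+1}\to T_i$.

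The main obstacle is the Type B substitution when $x_{h-2},x_{h-3},\ldots$ also share column $j$, forcing a cascade of substitutions; in this cascade one must verify that each successive substitute cell $(\rho_{h-i},j+1)$ remains non-bullet, despite other bullets possibly being present at intermediate jdt stages. I expect this can be resolved by inducting on the cascade length, combined with a judicious choice of slide order (e.g., processing the rightmost bullet first) to control the positions of other bullets at the moment of the slide.
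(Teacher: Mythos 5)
Your overall strategy is the same as the paper's: reduce to showing that each jeu de taquin slide preserves the existence of a length-$(r+1)$ generalized antidiagonal with entries in $S$ (the paper's Claim~\ref{claim:genanti}), split into cases according to whether the slid entry belongs to the antidiagonal, and repair the Type~B column violation by substituting antidiagonal entries in column $j$ with their right-neighbors in column $j+1$. Your Type~A argument is exactly the paper's Case~II, and your Type~B substitution is the paper's Case~III.

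The ``main obstacle'' you flag is a genuine gap in your write-up, and your tentative fix points in the wrong direction. The paper closes it as follows. First, there is no need to induct on the cascade length: one replaces \emph{all} antidiagonal entries lying in column $j$ above $b$ with the entries immediately to their right in column $j+1$ in a single step; these substitute values are automatically strictly increasing (they sit in one column), are sandwiched between the old values and $a<b$ by row-weak and column-strict monotonicity, hence lie in $S$, and make $b$ the topmost antidiagonal entry in its column so the slide is harmless. Second, the substitute cells sit in the same column as the currently moving bullet, so what must be ruled out is a \emph{second} bullet in that column; the paper's slide order in Lemma~\ref{lemma:promcommute} --- process the \emph{leftmost} bullet first and move it all the way to its final position before touching the next --- guarantees that no two bullets ever share a column (settled bullets occupy row $1$ in columns strictly to the left of where the current bullet can reach, and unprocessed bullets stay in the bottom row strictly to its right). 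Your suggestion of ``processing the rightmost bullet first'' fails on both counts: the rightmost bullet's left neighbor is itself a bullet, so the slide rule is not even well-defined there, and the moving bullet would pass through columns still occupied by unprocessed bullets. With the leftmost-first order substituted for your tentative one, your argument matches the paper's proof.
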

\begin{proof}
    ($\implies$):
    By Theorem~\ref{thm:initial-ideal}, $\mm$ contains a generalized antidiagonal of size $r+1$ with entries in $S$. 
    Since $S\subset [n-1]$, no entry in any generalized antidiagonal is affected when $n$ is turned into $\bullet$. Let $j$ be the number of jdt slides applied to $\mm$ and let $T_j$ be the resulting tableau after $j$ slides. 

    \begin{claim}\label{claim:genanti}
       $T_j$ contains a generalized antidiagonal with entries in $S$ for all $j$. 
    \end{claim}
    \begin{proof}
        We proceed by induction on $j$, the number of jdt slides applied to $T$. If $j = 0$, by definition, there is a generalized antidiagonal in $T_0 = \mm$. Suppose the claim holds for $j-1$ and let \ytableausetup{boxsize=1em}
    \begin{ytableau}
        \none & a\\
        b & \bullet
    \end{ytableau} be the local picture of the $j$-th jdt slide. Let $A :=\{(\rho_{i},\sigma_{r+2-i}):i\in [r+1]\}$ be the set of positions of a generalized antidiagonal in $T_{j-1}$. We now divide into four cases based on if $a$ or $b$ lies in $A$:

    \noindent\textbf{Case \RNum{1}.}($a,b\notin A$): In this case, the jdt slide does not change the position of any entry in the generalized antidiagonal, and thus $A$ is still a generalized antidiagonal after the jdt slide. 

    \noindent\textbf{Case \RNum{2}.}($a\in A,b\notin A$): If $b>a$, then the jdt slide does not change any entry in $A$ and thus $A$ remains generalized antidiagonal. If $a\geq b$, then for any entry $c$ on the left of $b$ in the same row, we have $a\geq c$. Thus $c\notin A$ and sliding $a$ down does not change the fact that entries in $A$ form a generalized antidiagonal.

    \noindent\textbf{Case \RNum{3}.}($a\notin A,b\in A$): Here we focus on the case where $b>a$ since the jdt slide does not change entries in $A$ if $a\geq b$. Let $\sigma$ be the column index of $b$ and let $\{(\rho_i,\sigma):i\in I\}\subset A$ be the set of entries of $A$ in column $\sigma$ that are above $b$.
    
    With the order of jdt slides for $T$ as in Lemma~\ref{lemma:promcommute}, no two $\bullet$ appear in the same column. Thus the entries $\{(\rho_i,\sigma+1):i\in I\}$ contain no $\bullet$ in $T_{j-1}$ and are disjoint from $A$. Let $A' = A\setminus \{(\rho_i,\sigma):i\in I\} \cup \{(\rho_i,\sigma+1):i\in I\}$. 
    Since 
    \[T_{j-1}(\rho_i,\sigma)\leq T_{j-1}(\rho_i,\sigma+1) \leq a < b,\]
    $A'$ is a generalized antidiagonal in $T_{j-1}$ with $b\in A'$ being the top entry in column $\sigma$. Therefore sliding $b$ to the right does not violate any condition for $A'$ to be a generalized antidiagonal. 
    
    \noindent\textbf{Case \RNum{4}.}($a,b\in A$): Since $a,b\in A$, we have $b>a$ and $A$ is still a generalized antidiagonal after sliding $b$ to the right.
    \end{proof}
    
    By Claim~\ref{claim:genanti}, there is a generalized antidiagonal with entries in $S$ after all jdt slides.
    Since all entries will increase by $1$ after jeu de taquin, $\mathrm{prom}(\mm)$ contains a generalized antidiagonal of size $r+1$ with entries in $\chi(S)$. By Theorem~\ref{thm:initial-ideal}, $\mathrm{prom}(\mm) \in \inn_\omega(\cJ_{\chi(S)\leq r})$.

    ($\impliedby$): This direction follows from the same reasoning as above, with the slight change of replacing $\mathrm{prom}$ with $\mathrm{prom}^{-1}$.
\end{proof}

\begin{prop}\label{prop:basicprom}
    For any cyclic interval $S\subset [n]$ and any $\mm\in B(k,n,d)$, we have $\mm\in \inn_\omega(\cJ_{S\leq r})$ if and only if $\mathrm{prom}(\mm) \in \inn_\omega(\cJ_{\chi(S)\leq r})$.
\end{prop}
\begin{proof}
    Since $S\subset [n]$ is a cyclic interval, either $S\subset [n-1]$ or $S^\vee \subset [n-1]$. If $S\subset[n-1]$, this is the content of Lemma~\ref{lemma:jdtantidiag}. If $S^\vee \subset [n-1]$, by Proposition~\ref{prop:init-dual}, 
    \[\varphi(\mm) = \mm^\vee\in \inn_\omega(\cJ_{S^\vee\leq r^\vee}).\]
    By Lemma~\ref{lemma:promcommute} and Lemma~\ref{lemma:jdtantidiag}, 
    \[\varphi\circ \mathrm{prom}(\mm) = \mathrm{prom}\circ \varphi(\mm) \in \inn_\omega(\cJ_{\chi(S^\vee)\leq r^\vee}).\]
    Since $\chi(S^\vee) = \chi(S)^\vee$, by Proposition~\ref{prop:init-dual}, 
    \[\mathrm{prom}(\mm)\in  \inn_\omega(\cJ_{\chi(S)\leq r}).\qedhere\]
\end{proof}

\begin{proof}[Proof of Theorem~\ref{thm:promotion}] Let $\{S_i\leq r_i\}$ be the set of interval rank conditions associated to $f$. By Theorem~\ref{thm:std-mon-intersect},
\[\sum_{i}\inn_\omega(\cJ_{S_i\leq r_i}) = \inn_\omega(\cJ_f),\]
where both sides are square-free monomial ideals.
Therefore for $\mm\in B(k,n,d)$,
\[\mm\in \inn_\omega(\cJ_f)\iff \mm\in \inn_\omega(\cJ_{S_i\leq r_i})\]
for some $i$. Since $\sum_{i}\inn_\omega(\cJ_{\chi(S_i)\leq r_i}) = \inn_\omega(\cJ_{\chi(f)})$, by Proposition~\ref{prop:basicprom}, 
\[
\mm\in \inn_\omega(\cJ_f) \iff \mathrm{prom}(\mm)\in \inn_\omega(\cJ_{\chi(f)})\]
which is equivalent to
\[
\mm\in B_f(k,n,d) \iff \mathrm{prom}(\mm)\in B_{\chi(f)}(k,n,d).\qedhere\]
\end{proof}

\subsection{Evacuation}
\label{subsec:evacuation}
\begin{definition}\label{def:evac}
    Let \textbf{evacuation} be the map $\evac:B(k,n,d)\to B(k,n,d)$ defined by:
\begin{enumerate}
    \item replace every entry $x$ with $n+1-x$,
    \item rotate the tableau by $180^\circ$. 
\end{enumerate}
\end{definition}

For rectangular semistandard tableaux, \(\evac\) again produces a semistandard tableau of the same shape; see \cite{bloom2016proofs}*{Theorem 2.9(d)}.

\begin{remark}
    Similar to promotion, evacuation is also defined for semistandard tableaux of arbitrary shape. Our definition above is \cite{bloom2016proofs}*{Theorem 2.9(d)} and is specific to semistandard tableaux of rectangular shapes. 
\end{remark}

\begin{theorem}\label{thm:evac}
Let \(f\in \mathrm{Bound}(k,n)\) and \(d\ge 0\).
Let \(f^*\in \mathrm{Bound}(k,n)\) be defined by the condition
\(\Pi_{f^*}=w_0\cdot \Pi_f\) (see Section~\ref{section: positroid-background}).
Then for any \(\mm\in B(k,n,d)\),
\[
\mm\in B_f(k,n,d)\quad \Longleftrightarrow\quad \evac(\mm)\in B_{f^*}(k,n,d).
\]
\end{theorem}

\begin{example}
    Let $k=4$, $n=11$, and consider $\Pi_f=\Pi_{[3,7]\le 2}$. 
    Then $\Pi_{f^*}=\Pi_{[5,9]\le 2}$. We show an example of evacuation mapping a standard monomial in $B_f(k,n,d)$ to a standard monomial in $B_{f^*}(k,n,d).$
\end{example}
\ytableausetup{boxsize=1.2em}
\[
\begin{tikzcd}[row sep=0.6cm,column sep=2cm]
\yshort{1{\rr{3}},{\rr{5}}5,{\rr{7}}9,{10}{11}}\arrow[r,"x\mapsto n+1-x"] & \yshort{{11}{\rr{9}},{\rr{7}}7,{\rr{5}}3,{2}{1}}\arrow[r,"\text{rotate }180^\circ"] & \yshort{12,3{\rr{5}},7{\rr{7}},{\rr{9}}{11}}
\end{tikzcd}
\]
We need the following lemma:
\begin{lemma}\label{lemma:evac-dual}
    For any $\mm\in B(k,n,d)$, $\evac(\mm^{\vee}) = \evac(\mm)^\vee$.
\end{lemma}
\begin{proof}
    For $\aa\in {[n]\choose k}$, set 
$\aa^* := \{n+1-a:a\in \aa\}.$ 
The lemma follows from $\aa^{*\vee} = \aa^{\vee *}$.
\end{proof}

\begin{proof}[Proof of \Cref{thm:evac}]

By Theorem~\ref{thm:std-mon-intersect}, it suffices to show the statement holds when $\Pi_f=\Pi_{S\le r}$. Let $S^*:= \{n+1-s:s\in S\}$, we will show that \[\mm \in \inn_\omega(\cJ_{S\leq r})\iff \evac(\mm)\in \inn_{\omega}(\cJ_{S^*\leq r}).\] 
In fact, since both $\evac$ and $S\mapsto S^*$ are involutions, it is enough to show the forward direction.

We use the generalized antidiagonal characterization of \(\inn_\omega(\cJ_{S\le r})\) from Theorem~\ref{thm:initial-ideal}.
Suppose $S$ is an interval. Let $\mm = \prod_{i = 1}^d [\aa^{(i)}]\in \inn_\omega(\cJ_{S\leq r})$. 
By \Cref{def:evac}, $\evac(\mm) = \prod_{i = 1}^d[\bb^{(i)}]$ where $\bb^{(i)} = (\aa^{(d+1-i)})^*$. 
Since $\mm$ contains a generalized antidiagonal $\{\aa^{(\sigma_{r+1})}_{\rho_1}< \dots <\aa^{(\sigma_{1})}_{\rho_{r+1}}\}$ for $S\le r$ and $\bb^{(d+1-\sigma_{r+2-i})}_{k+1-\rho_i} = n+1-\aa^{(\sigma_{r+2-i})}_{\rho_i}$, we have
\begin{equation}\label{eqn:*genanti}
    \bb^{(d+1-\sigma_{1})}_{k+1-\rho_{r+1}}< \dots< \bb^{(d+1-\sigma_{r+1})}_{k+1-\rho_1}.
\end{equation}
Since $\sigma_1 \leq \dots \leq \sigma_{r+1}$ and $\rho_1 < \dots < \rho_{r+1}$, the $r+1$ element set in \eqref{eqn:*genanti}
forms a generalized antidiagonal of $\evac(\mm)$ for $S^*\leq r$ and thus $\evac(\mm)\in \inn_\omega(\cJ_{S^*\leq r})$. 

Now suppose $S$ is a wrapped-around interval. By \Cref{lemma:evac-dual}, $\evac(\mm^\vee) = \evac(\mm)^\vee$. Since $S^\vee$ is an interval, the following diagram commutes:
\begin{center}
\begin{tikzcd}[row sep=0.6cm,column sep=1cm]
\mm\in \inn_{\omega}(\cJ_{S\leq r})  \arrow[r,mapsto] \arrow[d,mapsto] & \mm^\vee \in \inn_{\omega}(\cJ_{S^\vee\leq r^\vee})\arrow[d,mapsto]\\
\evac(\mm) \arrow[r,mapsto] & \evac(\mm)^{\vee} = \evac(\mm^{\vee})\in \inn_\omega(\cJ_{S^{\vee *}\leq r^\vee})
\end{tikzcd}
\end{center}
Since $S^{\vee *} = S^{*\vee}$, by \Cref{prop:init-dual}, $\evac(\mm) \in \inn_\omega(\cJ_{S^{*}\leq r})$. 
\end{proof}

\section{A character formula for cyclic Demazure modules}
\label{sec:cyclic_demazure}

In this section we give a character formula for cyclic Demazure modules $V_{f}(d\omega_k)$. Section~\ref{subsec:cyclic_demazure_def} provides necessary background and definitions. In Section~\ref{subsec:hilbert-series-positroid},
 we relate a multigraded Hilbert series of the positroid variety $\Pi_f$ with the character of $V_f(d\omega_k)$, and state our main theorem. 

We summarize our strategy as follows. 
By slicing the positroid variety with a certain hyperplane (\Cref{prop:cut-top-point}), we obtain a recurrence \eqref{eqn:positroidrecur} on the Hilbert series of $\Pi_f$. 
To give an explicit inductive formula, we establish a connection between the $K$-theory of matrix Schubert varieties and that of positroid varieties (\Cref{prop:chain}). Specifically, in Section~\ref{subsec:msv}, we interpret Monk's rule for Grothendieck polynomials as an identity on (twisted) $K$-polynomials for matrix Schubert varieties. In Section~\ref{subsec:relating-hilb}, we relate the desired recurrence on the Hilbert series of positroid varieties to a similar recurrence on matrix Schubert varieties. This allows us to deduce our desired inductive formula (\Cref{thm:hilbrecur}) using Lenart's $K$-theoretic Monk's rule \cite{lenart2003k}.

\subsection{Cyclic Demazure modules}
\label{subsec:cyclic_demazure_def}
In this section we provide a detailed introduction to cyclic Demazure modules, which were introduced by Lam \cite{Lam19}. Readers familiar with these constructions should feel free to skip to the next section.

Let $G/B$ be a flag variety and $\lambda$ a dominant weight.
Under the Borel-Weil isomorphism, the dual of the Demazure module $V_{w}(\lambda)$ can be identified with the space of global sections $H^0(G/B, \mathcal{L}_\lambda)$ restricted to the Schubert variety $X_w$, where $\mathcal{L}_\lambda$ is the line bundle associated to $\lambda$. The character of a Demazure module can be computed by the well-known Demazure character formula via applications of the Demazure operators. When $\lambda=d\omega_k$ is a fundamental weight, there is an isomorphism $H^0(G/B,\mathcal{L}_{d\omega_k})\cong H^0(\gr(k,n),\mathcal{L}_{d\omega_k}) =: (V(d\omega_k))^\ast$ for $d\ge 0$. 

For a positroid variety $\Pi_f\subseteq \gr(k,n)$, define the restriction map:
\begin{equation}
    \res_f: H^0(\gr(k,n),\mathcal{L}_{d\omega_k})\longrightarrow H^0(\Pi_f,\mathcal{L}_{d\omega_k}).
\end{equation}

We first note that positroid varieties are projectively normal in the Pl\"ucker embedding. 
\begin{prop}
\label{prop:section-coord-d}
     For $d\ge 0$ and $f\in\bound(k,n)$, we have
     \begin{equation}
     H^0(\Pi_f, \mathcal{L}_{d\omega_k})\cong \k[\Pi_f]_d.
     \end{equation}
\end{prop}
\begin{proof}
    Positroid varieties are normal by \cite{KLS2014projections}*{Corollary 4.7}. By the proof of  \cite{KLS2014projections}*{Theorem 4.5}, the map $\res_f$ is surjective for any $f\in\bound(k,n)$. Hence positroid varieties are projectively normal, and the claim follows by \cite{hartshorne1977algebraic}*{Ex.II.5.14}.
\end{proof}
For $I\in {[n]\choose k}$, set $f_I\in \bound(k,n)$ to be the bounded affine permutation such that $\Pi_{f_I} = X_I$, where $X_I$ is the Schubert variety as in \eqref{eqn:Schubertvariety}.
By Proposition~\ref{prop:section-coord-d}, 
$H^0(X_I,\mathcal{L}_{d\omega_k})$ can be naturally identified with a quotient space of $H^0(\gr(k,n),\mathcal{L}_{d\omega_k})$. 
The \textbf{Demazure module} for $d\omega_k$ is
\begin{equation}
    V_{I}(d\omega_k) := (H^0(X_I,\mathcal{L}_{d\omega_k}))^*.
\end{equation}
Then $V_{I}(d\omega_k)$ is a subspace of $V(d\omega_k)$ by 
\begin{equation}\label{eqn:demazuremod}
    V_{I}(d\omega_k) = \{v\in V(d\omega_k): s(v) = 0 \text{ for all }s\in \ker(\res_{f_I})\}.
\end{equation}

Let $\chi:\gr(k,n)\rightarrow \gr(k,n)$ be as defined in \eqref{eqn:chi}. Define 
\[\chi^*: H^0(\gr(k,n),\mathcal{L}_{d\omega_k}) \rightarrow H^0(\gr(k,n),\mathcal{L}_{d\omega_k})\]
to be the pullback map induced by $\chi^{-1}$. 
Since $V(d\omega_k)\cong (H^0(\gr(k,n),\mathcal{L}_{d\omega_k}))^*$, the map $\chi^*$ induces a map on $V(d\omega_k)$. We abuse the notation and denote the map on $V(d\omega_k)$ by $\chi$. We are now ready to define the cyclic Demazure module as introduced in \cite{Lam19}.
\begin{definition}\label{def:cyclicdemazure}
    For $f\in \bound(k,n)$ and $(I_0,\dots, I_{n-1})$ the corresponding juggling state as in \Cref{def:jugglingstate}, the \textbf{cyclic Demazure module} is:
    \begin{equation}
        V_f(d\omega_k) := \bigcap_{i = 0}^{n-1} \chi^i(V_{I_i}(d\omega_k)).
    \end{equation}
\end{definition}
\begin{remark}
    We note the slight difference in notation between Definition~\ref{def:cyclicdemazure} and the definition in \cite{Lam19} using Grassmann necklaces.
    The $j$-th Grassmann necklace is precisely $I_j+j$ where addition is done entrywise mod $n$ as usual. 
\end{remark}

\begin{lemma}
\label{lem:kerres}
    Let $f\in \bound(k,n)$ corresponding to the juggling state $(I_0,\dots, I_{n-1})$. Define $f_i:=\chi^i(f_{I_i})$ to be the bounded affine permutation associated to the $i$th rotated Schubert variety, and $U_i:=\ker(\res_{f_i}).$ Then $\sum_{i=0}^{n-1} U_i=\ker(\res_{f})$. 
\end{lemma}

\begin{proof}
    Under the identification given in Proposition~\ref{prop:section-coord-d}, $U_i\cong \{s\in \k[\gr(k,n)]_d : s\in (\cJ_{f_i})_d\}$ and $\ker(\res_f)\cong \{s\in\k[\gr(k,n)]_d: s\in (\cJ_{f})_d\}$. The claim then follows from the fact that $\cJ_f=\sum_{i=0}^{n-1}{\cJ}_{f_i}$.
\end{proof}

\begin{lemma}\label{lemma:rotatedSchub}
    $(\chi^i(V_{I}(d\omega_k)))^* \cong H^0(\Pi_{\chi^i f_I},\mathcal{L}_{d\omega_k})$.\footnote{We thank an anonymous referee for this short proof.}
\end{lemma}

\begin{proof}
Since $\chi$ is an automorphism of $\Gr(k,n)$, we have
$\Pi_{f_i}=\chi^i(X_{I_i})$ and hence $\cJ_{f_i}=\chi^i(\cJ_{f_{I_i}})$.
Moreover, $\Pi_f=\bigcap_{i=0}^{n-1}\Pi_{f_i}$ scheme-theoretically, so
$\cJ_f=\sum_{i=0}^{n-1}\cJ_{f_i}$.

By Proposition~\ref{prop:section-coord-d}, under the identification
$H^0(\Gr(k,n),\mathcal{L}_{d\omega_k})\cong R(k,n)_d$ we have
$\ker(\res_g)=(\cJ_g)_d$ for any $g$. Therefore
\[
\ker(\res_f)=(\cJ_f)_d=\Bigl(\sum_{i=0}^{n-1}\cJ_{f_i}\Bigr)_d
=\sum_{i=0}^{n-1}(\cJ_{f_i})_d=\sum_{i=0}^{n-1}U_i,
\]
as claimed.
\end{proof}

The following proposition and its corollary allow us to study characters of cyclic Demazure modules through the homogeneous coordinate ring $\k[\Pi_f]$.
\begin{prop}
\label{prop:coord-ring-id}
    As a $T$-representation, 
    \[V_f(d\omega_k)^*\cong H^0(\Pi_f,\mathcal{L}_{d\omega_k})\cong \k[\Pi_f]_d.\]
\end{prop}

\begin{proof}
    By Lemma~\ref{lem:kerres} and Proposition~\ref{prop:section-coord-d}, we have \[H^0(\Pi_f,\mathcal{L}_{d\omega_k})\cong H^0(\gr(k,n), \mathcal{L}_{d\omega_k})/\sum_{i=0}^{n-1}\ker(\res_{f_i}).\] 
    The vector space isomorphism then follows from Definition~\ref{def:cyclicdemazure} and Lemma~\ref{lemma:rotatedSchub}. For $T$-equivariance, it follows from the fact that the action $\chi$ on $\gr(k,n)$ commutes with the standard $T$-action. 
\end{proof} 
Recall that $B_f(k,n,d)$ is the set of semistandard tableaux of shape $k\times d$ with entries $\le n$ that correspond to the standard monomials of $\Pi_f$. For $\alpha\in B_f(k,n,d)$, define $\content(\alpha):=(c_1,\dots,c_n)$ where $c_i$ counts the number of appearances of $i$ in $\alpha$. Finally, define $\mathbf{t}^\alpha:=\prod_{i=1}^n t_i^{\content(\alpha)_i}$. Our characterization of $B_f(k,n,d)$ (Theorem A) gives a tableaux formula for $\ch(V_f(d\omega_k))$.

\begin{cor}
    The $T$-character for the cyclic Demazure module $V_f(d\omega_k)$
    is 
    \[\ch(V_f(d\omega_k))=\sum_{\alpha\in B_f(k,n,d)} \mathbf{t}^\alpha.\]
\end{cor}

\begin{proof}
By \Cref{thm:basic-std-mon}, the degree-$d$ piece $\Bbbk[\Pi_f]_d$ has a basis of standard monomials
$\{m_\alpha : \alpha\in B_f(k,n,d)\}$ indexed by semistandard tableaux of shape
$k\times d$.  Writing $\alpha$ by its columns, let $I_1,\dots,I_d\in\binom{[n]}{k}$
be the column sets, so that
\[
m_\alpha = [I_1]\cdots[I_d]\in \Bbbk[\Pi_f]_d.
\]
For the standard diagonal torus $T=(\Bbbk^\times)^n$, the induced action on Pl\"ucker
coordinates is
\[
(t_1,\dots,t_n)\cdot [I] \;=\;\Bigl(\prod_{i\in I} t_i^{-1}\Bigr)[I].
\]
Hence each $m_\alpha$ is a $T$-weight vector, and its weight is the product of the
weights of its factors:
\[
(t_1,\dots,t_n)\cdot m_\alpha
= \Bigl(\prod_{j=1}^d\prod_{i\in I_j} t_i^{-1}\Bigr)m_\alpha
= \Bigl(\prod_{i=1}^n t_i^{-\content(\alpha)_i}\Bigr)m_\alpha
= \mathbf t^{-\alpha}\, m_\alpha.
\]
Therefore
\[
\ch(\Bbbk[\Pi_f]_d)=\sum_{\alpha\in B_f(k,n,d)} \mathbf t^{-\alpha}.
\]
By Proposition~\ref{prop:coord-ring-id}, $\Bbbk[\Pi_f]_d\cong V_f(d\omega_k)^*$ as
$T$-modules, so $\ch(V_f(d\omega_k)^*)=\sum_{\alpha\in B_f(k,n,d)} \mathbf t^{-\alpha}$.
Finally, since $\ch(W^*)(\mathbf t)=\ch(W)(\mathbf t^{-1})$, we obtain
\[
\ch(V_f(d\omega_k))=\sum_{\alpha\in B_f(k,n,d)} \mathbf t^{\alpha},
\]
as claimed.
\end{proof}

\subsection{Multigraded Hilbert series for positroid varieties}
\label{subsec:hilbert-series-positroid}
Fix a grading on $R:=R(k,n)$ by assigning each Pl\"ucker variable $\aa\in\binom{[n]}{k}$ the degree $\deg ([\aa])=(d_1,\dots, d_n)$ where $d_i=1$ if $i\in\aa$ and $d_i=0$ otherwise. The ring $R$ is then a $\ZZ^n$-graded polynomial ring, where $\deg([\aa])$ for all $\aa\in\binom{[n]}{k}$ lie in an open half space. This ensures that if $M$ is a $\ZZ^n$-graded finitely generated $R$-module, the $\mathbf{d}$th graded piece $M_\mathbf{d}$ is a finite dimensional $\mathbb{k}$-vector space for all $\mathbf{d}\in \ZZ^n$. For more general background on multigraded Hilbert series and $K$-polynomials, we refer the reader to \cite{miller2005combinatorial}*{Chapter 8}.

The following proposition is straightforward from the definitions of cyclic Demazure characters.
\begin{prop}
The degree-$d$ component of the multigraded Hilbert series 
\begin{equation}
    \hilb(R/\cJ_f, \mathbf{t})=\sum_{d=0}^\infty \sum_{\alpha \in B_f(k,n,d)}\mathbf{t}^\alpha,
\end{equation}
is the character of $V_f(d\omega_k)$. 
\end{prop}
 
To find a formula for the character of $V_f(d\omega_k)$, we establish a recurrence relation on the Hilbert series of positroid varieties. We rely on the following geometric identity. 

\begin{definition}\label{def:topp}
Let \(f\in \mathrm{Bound}(k,n)\) and write \(\Pi_f=\Pi_{v(f)}^{u(f)}\) as in Lemma~\ref{lemma:wtouv}.
Define
\[
\topp(f):=v(f)([k])=\{v(f)(1),\dots,v(f)(k)\}\in \binom{[n]}{k}.
\]
Equivalently, \([\topp(f)]\) is the lexicographically smallest Pl\"ucker coordinate that does not vanish on \(\Pi_f\).
\end{definition}

\begin{prop}
\label{prop:cut-top-point}
Let $C_0(f) := \{f'\in \bound(k,n): f'\gtrdot_0 f\}$.
Then the following formula is true scheme-theoretically:
\begin{equation}
\label{eqn:cotrans}
    \{[\topp(f)]=0\} \cap \Pi_f = \bigcup_{f'\in C_0(f)}\Pi_{f'}.
\end{equation}
\end{prop}

\begin{proof}
    This is exactly \cite{KLS13juggling}*{Corollary 7.3}, translated to the language of bounded affine permutations.
\end{proof}

We now compute the Hilbert series of both sides of \eqref{eqn:cotrans}. Since $[\topp(f)]$ is not a zero divisor of $R/\cJ_f$, we have 
\begin{equation}\label{eqn:LHS}
    \hilb(R/(\cJ_f+\langle [\topp(f)]\rangle); \mathbf{t}) = (1-\mathbf{t}^{\topp(f)}) \hilb(R/\cJ_f;\mathbf{t})
\end{equation}
For the right-hand side, we need the following definition.
\begin{definition}\label{def:L_0}
Recall that \(\lessdot_0\) denotes the covering relation in \(0\)-Bruhat order on \(\widetilde S_n^k\)
(Section~\ref{section: positroid-background}).
Let $\Gamma_0(f)$ denote the set of saturated chains in $0$-Bruhat order,
\[f\lessdot_0 ft_{a_1,b_1}\lessdot_0 ft_{a_1,b_1}t_{a_2,b_2} \lessdot_0\cdots \lessdot_0 ft_{a_1,b_1}t_{a_2,b_2}\cdots t_{a_m,b_m},\]
where we require each $b_i\in [1,n]$, and
and we impose the lexicographic condition that the sequence \((b_i,-a_i)\) is strictly decreasing,
i.e.\ \(b_i>b_{i+1}\) or \(b_i=b_{i+1}\) and \(a_i<a_{i+1}\).
Let $\Lenart_0(f)$ be the set of permutations that are endpoints of chains in $\Gamma_0(f)$.
\end{definition}

We are now ready to state the key Hilbert-series recursion, from which the character recursion
(Theorem~\ref{thm:character}) follows immediately.

\begin{theorem}\label{thm:hilbrecur}
Fix \(f\in \mathrm{Bound}(k,n)\), and let \(R=R(k,n)\) with the \(\mathbb Z^n\)-grading
\(\deg([\aa])=\sum_{i\in\aa}e_i\) described in Section~\ref{subsec:hilbert-series-positroid}.
Let \(C_0(f)=\{f'\in \mathrm{Bound}(k,n): f'\gtrdot_0 f\}\) and let \(\Lenart_0(f)\) be the set of endpoints
of chains in \(\Gamma_0(f)\) (Definition~\ref{def:L_0}).
If \(\Pi_f\) is not a point stratum (equivalently, \(C_0(f)\neq\emptyset\)), then
\begin{equation}\label{eq:hilbert-recursion}
\hilb\!\left(R/\bigcap_{f'\in C_0(f)}\cJ_{f'};\mathbf t\right)
=\sum_{g\in \Lenart_0(f)\cap \mathrm{Bound}(k,n)}(-1)^{\ell(g)-\ell(f)+1}\,
\hilb(R/\cJ_{g};\mathbf t).
\end{equation}
Moreover, for $g\in \Lenart_0(f)$,
\begin{equation}\label{eqn:Jun1aaa}
    g\notin \bound(k,n)\iff g\geq f'\gtrdot_0 f
\end{equation}
for some $f'\notin \bound(k,n)$.
\end{theorem}

We can then deduce the following inductive character formula by combining \eqref{eqn:LHS}  and Theorem~\ref{thm:hilbrecur}.

\begin{theorem}\label{thm:character}
Let \(f\in \mathrm{Bound}(k,n)\) and \(d\ge 0\).
If \(d=0\), then \(\ch(V_f(0\cdot \omega_k))=1\).
If \(d\ge 1\) and \(f\) is not a point stratum, then in the character ring of \(T\) we have
\begin{equation}\label{eq:character-formula}
\ch(V_f(d\omega_k))
=
\mathbf t^{\topp(f)}\left(
\ch(V_f((d-1)\omega_k))
+
\sum_{g\in \Lenart_0(f)\cap \mathrm{Bound}(k,n)}
(-1)^{\ell(g)-\ell(f)+1}\,
\ch(V_g(d\omega_k))
\right).
\end{equation}
\end{theorem}

\begin{remark}
One can rephrase \eqref{eq:character-formula} by replacing the indexing bounded affine permutation with Grassmann intervals.
Let $u = u(f), v = v(f)$ and set $V_v^u:= V_f$. Then
    \begin{equation}
\label{eq:character-formula-uv}
        \ch (V_v^u(d\omega_k))=\mathbf{t}^{v([k])}\ch (V_v^u((d-1)\omega_k)+\sum_{\substack{v'\in \Lenart_k(v)\\ v'\leq u}}(-1)^{\ell(v')-\ell(v)+1} V_{v'}^u(d\omega_k).
\end{equation}
Indeed, all $f'\in \Lenart_0(f)$ lie in the same coset $W_{A_{n-1}}f$ by \Cref{thm:lub} and \Cref{prop:poset-iso}. Equivalently, $u(f) = u(f')$ for all $f'\in \Lenart_0(f)$. In this case, using the bijection between $\bound(k,n)$ and $\mathcal{Q}(k,n)$ (see Section~3.4 of \cite{KLS13juggling}), it is straightforward to see that taking least upper bounds in $\bound(k,n)$ is the same as taking least upper bounds of $v$'s in $\symm_n$ and that $f'\in \Lenart_0(f)\cap \bound(k,n)$ if and only if $v'\in \Lenart_k(v)$ and $v'\leq u$. 
\end{remark}

\begin{remark}
Theorem~\ref{thm:character} should be viewed as a combinatorial character formula:
the coefficient of a monomial \(\mathbf t^\alpha\) is determined by an alternating sum over
certain \(0\)-Bruhat chains starting at \(f\), and can be computed purely combinatorially.
\end{remark}

\begin{example}
    \label{ex:char-formula}
    We illustrate how Theorem~\ref{thm:character} yields explicit combinatorial coefficients
in the character by working out the example \(f=[3465]\).
Then $\Pi_f=\Pi_v^u\subset \gr(2,4)$ where $u=3412$ and $v=2134$, so $\topp(f)=\{1,2\}$. The set $\Gamma_0(f)$ consists of 
    $f\lessdot_0 ft_{-1,1}$, $f\lessdot_0 ft_{0,1}$, and $f\lessdot_0 ft_{-1,1}\lessdot_0 ft_{-1,1}t_{0,1}$. Here $ft_{-1,1}=[2475]$, $ft_{0,1}=[1467]$, and $ft_{-1,1}t_{0,1}=[1476]$. 
    In terms of rank conditions, $\Pi_{[3465]}=\Pi_{[4,1]^\circ\le 1}$, $\Pi_{[2475]}=\Pi_{[4,2]^\circ\le 1}$, $\Pi_{[1467]}=\Pi_{[1]\le 0}$, and $\Pi_{[1476]}=\Pi_{[4,2]^\circ\le 1}\cap \Pi_{[1]\le 0}$.
    We precompute
    \[\ch(V_{[3465]}(\omega_2))=t_1t_2+t_1t_3+t_2t_3+t_2t_4+t_3t_4,\]
    \[\ch(V_{[2475]}(2\omega_2))=t_3^2t_4^2+t_2t_3^2t_4+t_1t_3^2t_4+t_2^2t_3^2+t_1t_2t_3^2+t_1^2t_3^2,\]
    \[\ch(V_{[1467]}(2\omega_2))=t_2^2t_3^2 +t_2^2t_3t_4+t_2t_3^2t_4+t_2^2t_4^2+t_2t_3t_4^2+t_3^2t_4^2.\]
    \[\ch(V_{[1476]}(2\omega_2))=t_2^2t_3^3+t_2t_3^2t_4+t_3^2t_4^2.\]

    Then by (\ref{eq:character-formula}) we have 
    \begin{align*}
    \ch(V_{[3465]}(2\omega_2)) = &t_1^2t_2^2+t_1^2t_2t_3+t_1t_2^2t_3+t_1t_2^2t_4 +t_1t_2t_3t_4+t_3^2t_4^2+t_2t_3^2t_4\\
    &+t_1t_3^2t_4+t_2^2t_3^2+t_1t_2t_3^2+t_1^2t_3^2+t_2^2t_3t_4+t_2^2t_4^2+t_2t_3t_4^2.      
    \end{align*}
  One can easily verify this calculation against our characterization of standard monomials on positroid varieties.
\end{example}

 \subsection{\texorpdfstring{$K$}{K}-theoretic Monk's rule and the Hilbert series of matrix Schubert varieties}
 \label{subsec:msv}
 
We quickly recall the definition of matrix Schubert varieties \cite{Fulton92}, following the conventions of \cite{knutsonMiller}.

\begin{definition}
    Let $w\in \symm_n$. For $1\le q,p \le n-1$, let \[r_{q\times p}:=\abs{\{(i,j)\le (q,p): w(i)=j\}}.\]
    Let $Z$ denote the $n\times n$ matrix whose entry at $(i,j)$ is the indeterminate $z_{i,j}$, and $Z_{q\times p}$ the submatrix of $Z$ with rows in $1,\ldots,q$ and columns in $1,\ldots, p$. The \textbf{matrix Schubert variety} $\overline{X}_w$ is the vanishing locus of the ideal generated by all minors of size $1+r_{q\times p}$ in $Z_{q\times p}$ for all $1\le q,p \le n-1$. The defining ideal $\cI_w$ of the matrix Schubert variety $\overline{X}_w$ is called a \textbf{Schubert determinantal ideal}.
\end{definition}

We also recall the definition of Grothendieck polynomials. 
\begin{definition}
\label{def:grothendieck}
    Let \[\partial_i:=\frac{1-s_i}{x_i-x_{i+1}}\text{ and }\pi_i:= \partial_i(1-x_{i+1}),\]
be operators on polynomials in $\ZZ[x_1,x_2,\cdots]$, where $s_i$ acts by switching $x_i$ with $x_{i+1}$, $x_i$ acts as multiplication by $x_i$, and $1$ denotes the trivial action. 
The \textbf{Grothendieck polynomials} $\mathfrak{G}_w$ for $w\in \symm_\infty$ are the unique polynomials that satisfy
$\mathfrak{G}_{w_0^n}=x_1^{n-1}x_2^{n-2}\cdots x_{n-1}$ where
\[w_0^n:=n\ n-1 \ \cdots \ 1\]
is the longest permutation in $\symm_n$,
and $\pi_i\mathfrak{G}_w=\mathfrak{G}_{ws_i}$ if $\ell(ws_i)=\ell(w)-1$.
\end{definition}

Knutson--Miller \cite{knutsonMiller} relates the Grothendieck polynomials with $K$-polynomials of matrix Schubert varieties. Let $S:=\k[z_{i,j}]_{i,j\in[n]}$ where $\deg(z_{i,j})=x_i$.

\begin{theorem}{\cite{knutsonMiller}*{Theorem A}} 
         The twisted $K$-polynomial for $\overline{X}_w$,
        $\mathcal{K}(S/\mathcal{I}(\overline{X}_w);1-\mathbf{x})$, is equal to $\mathfrak{G}_w.$
\end{theorem}

\begin{remark}
    We note that the definition of Grothendieck polynomials in \cite{knutsonMiller}*{Definition 1.1.3} is different than Definition~\ref{def:grothendieck}. The difference is exactly the twist $x_i\mapsto 1-x_i$.
\end{remark}

We recall Lenart's $K$-theoretic Monk's rule in type $A$ \cite{lenart2003k}. Let $w$ be a permutation and $s_k$ the simple reflection $(k,k+1)$ for $k\ge 1$. 

\begin{definition}\label{def:L_k}
    Let $\Gamma_k(w)$ denote the set of saturated chains in $k$-Bruhat order,
\[
    w\lessdot_k wt_{a_1,b_1}\lessdot_k wt_{a_1,b_1}t_{a_2,b_2} \lessdot_k\cdots \lessdot_k wt_{a_1,b_1}t_{a_2,b_2}\cdots t_{a_m,b_m},
\]
where the pairs $(a_i,b_i)$ satisfies ($b_i>b_{i+1}$) or $(b_i=b_{i+1} \text{ and } a_i<a_{i+1})$, and $\Lenart_k(w)$ is defined as the set of permutations that are endpoints of chains in $\Gamma_k(w)$.
\end{definition}

The following identity holds for Grothendieck polynomials
$\G_w$.
\begin{theorem}[Monk's rule for Grothendieck polynomials \cite{lenart2003k}]
\label{thm:lenart-K-monk}
\[\G_{s_k}(x)\G_w(x)=\sum_{u\in \Lenart_k(w)} (-1)^{\ell(u)-\ell(w)+1}\G_u(x).\] 
\end{theorem}

 We interpret this $K$-theoretic Monk's rule as a geometric identity on matrix Schubert varieties\footnote{We thank Allen Knutson for the proof idea.}.
\begin{prop}
    \label{prop:geometric-monk}
    Let $w$ be a permutation and $s_k$ the simple reflection $(k,k+1)$ for $k\ge 1$. Let $m_{w[k]}$ denote the matrix minor obtained by taking rows in $\{1,\cdots,k\}$ and columns in $w[k]:=\{w(1),\cdots w(k)\}$ of the generic $n\times n$ matrix (where we take $n$ large enough so that all $k$-Bruhat covers of $w$ lies in $\symm_n$). Then scheme-theoretically,
    \[\{m_{w[k]} = 0\}\cap \overline{X}_w=\bigcup_{w\lessdot_k wt_{ab}} \overline{X}_{wt_{ab}}\]
\end{prop}
\begin{proof}
    The structure of this proof is the same as the proof of the geometric cotransition formula in \cite{knutson2022schubert}, as it is a direct generalization.
    For any matrix $M$, let $M_{\alpha,\beta}$ be the submatrix with row indices in $\alpha$ and column indices in $\beta$.

     We claim that $\{m_{w[k]}=0\}\cap \overline{X}_w$ is $B_-\times B$ stable. Since $B_-$ acts on matrices by performing downward row operations, it is clear that $\{m_{w[k]}=0\}\cap \overline{X}_w$ is stable under this action. $B$ acts on matrices by rightward column operations; it suffices to show stability under adjacent column operations. Let $M\in \{m_{w[k]}=0\}\cap \overline{X}_w$ and suppose we add a multiple of column $j$ to column $j+1$ and obtain $M'$. It is clear if $j+1\not\in w[k]$ or $j,j+1\in w[k]$ then $M'\in m_{w[k]}\cap \overline{X}_w$. We assume now that $j+1\in w[k]$ and $j\not\in w[k]$.  Let $r:=\abs{\{i:i\le k, w(i)<j\}}$. Then conditions from $\overline{X}_w$ dictate that the rank of the northwest submatrix with southeast corner $(k,j)$ is at most $r$. If $M_{[k],\{j\}}$ is not in the column span of $M_{[k],[j-1]}$, then the matrix $M_{[k],[j-1]}$ has rank at most $r-1$. Since $w[r]\subseteq [j-1]$, the rank of the matrix $M_{[k], w[r]}$ is at most $r-1$. Since $M_{[k], w[r]}=M'_{[k], w[r]}$, the minor $M'_{[k],w[k]}$ must vanish. If $M_{[k],\{j\}}$ is in the column span of $M_{[k],[j-1]}$, the claim then follows from the fact that $\rk M'_{[k],w([k])}\le\rk M'_{[k],\{j\}\cup w[k]}=\rk M_{[k],\{j\}\cup w[k]}=\rk M_{[k], w[k]}<k$. 

    Therefore, set-theoretically, $\{m_{w[k]}=0\}\cap \overline{X}_w$ is a union of matrix Schubert varieties, each of dimension $\dim \overline{X}_w-1$. Scheme theoretically, the intersection does not have embedded components since $\{m_{w[k]}=0\}$ is a Cartier divisor and $\overline{X}_w$ is normal. The permutations $u$ such that $u\gtrdot w$ and $\overline{X}_u\subset \{m_{w[k]}=0\}$ are exactly of the form $u=wt_{ab}$, where $w\lessdot_k wt_{ab}$.  It remains to show that the multiplicity of each such $\overline{X}_u$ in the scheme-theoretic intersection $\{m_{w[k]}=0\}\cap \overline{X}_w$ is 1. For this, we show that the tangent space to $\{m_{w[k]}=0\}\cap \overline{X}_w$ at the point $u=wt_{ab}$ where $w\lessdot_k u$ is $T_u\overline{X}_u$. The tangent space of $\{m_{w[k]}=0\}$ at $u=wt_{ab}\gtrdot_k w$ is exactly the hyperplane defined by the equation $z_{a,w(a)}=0$. 
    The claim then follows from \cite{knutson2022schubert}*{Lemma 4.4}. \qedhere

\end{proof}

\begin{cor}
\label{cor:monk-hilbert}
    We have the following identity on the Hilbert series:
    \begin{equation}\label{eqn:wmonkhilb}
    (1-x_1\cdots x_k)\hilb(S/\mathcal{I}_w;\mathbf{x})=\hilb(S/\bigcap_{u\gtrdot_k w}\mathcal{I}_u;\mathbf{x})
    \end{equation}
    and consequently on the twisted $K$-polynomials
    \begin{equation}(1-(1-x_1)\cdots (1-x_k))\mathcal{K}(S/\mathcal{I}_w;\mathbf{1-x})=\mathcal{K}(S/\bigcap_{u\gtrdot_k w}\mathcal{I}_u;\mathbf{1-x}),\end{equation}
    namely 
    \begin{equation}
        \mathfrak{G}_{s_k}(\mathbf{x})\mathfrak{G}_{w}(\mathbf{x})=\mathcal{K}(S/\bigcap_{u\gtrdot_k w}\mathcal{I}_u;\mathbf{1-x}).
    \end{equation}
\end{cor}

Combining Theorem~\ref{thm:lenart-K-monk} and Corollary~\ref{cor:monk-hilbert}, we get
\begin{cor}
    \label{cor:grothendieck-agree}
    \begin{equation}
        \mathcal{K}(S/\bigcap_{u\gtrdot_k w}\mathcal{I}_u;\mathbf{1-x})=
        \sum_{u\in \Lenart_k(w)} (-1)^{\ell(u)-\ell(w)+1}\G_u(x).
    \end{equation}
\end{cor}

\subsection{Relating the two Hilbert series}
\label{subsec:relating-hilb}

\begin{definition}\label{def:tilde}
Let \(f\in \mathrm{Bound}(k,n)\) (viewed as an affine permutation \(f:\mathbb Z\to\mathbb Z\)).
Set
\[
A(f):=\{\,i\in [2n]: f(i-n)\in [n]\,\}\subset [2n],
\qquad
A(f)^c=\{i_1<\cdots<i_n\}.
\]
Define \(\widetilde f\in \symm_{2n}\) by
\[
\widetilde f(i)=f(i-n)+k \quad\text{for } i\in A(f),
\]
and on \(A(f)^c\) define \(\widetilde f\) to be strictly decreasing by setting
\[
\widetilde f(i_j)=
\begin{cases}
2n-j+1, & 1\le j\le n-k,\\
n-j+1, & n-k< j\le n.
\end{cases}
\]
For \(U\subseteq \mathrm{Bound}(k,n)\), set \(\widetilde U:=\{\widetilde f: f\in U\}\).
\end{definition}

\begin{example}
    We show an example for computing $\widetilde{f}$ from $f$. Suppose \[f=[6,2,5,10,11,7,8],\] then $n=7$, $k=3$. The dots in the bounded region of Figure~\ref{fig:f-tilde} correspond to $f^{-1}(\{8,9,\cdots,14\})$. 
    Then \[\widetilde{f}=14\ 13\ 12\ 6\ 7\ 11\ 4\ 9\ 5\ 8\ 3\ 2\ 10\ 1 \in \symm_{2n}.\]
\end{example}

\begin{figure}

\begin{tikzpicture}[scale=1,transform shape]
\pgfmathsetmacro{\wid}{0.5}
\draw[SkyBlue, step=\wid] (0,0) grid (7,7);
    \draw[blue, thick] (3*\wid, 6*\wid) -- (3*\wid, 14*\wid);
    \draw[blue, thick] (10*\wid, 0) -- (10*\wid, 8*\wid);
    \foreach \i/\j in {0/0,1/2,2/3,3/7,4/5,5/10,6/9,7/4,8/6,9/1,10/8,11/11,12/12,13/13} {
        \pgfmathsetmacro{\x}{(\i)*\wid+0.5*\wid}%
        \pgfmathsetmacro{\y}{(\j)*\wid+0.5*\wid}%
        \node[] at (\x,\y) {$\bullet$};
    }
    \foreach \i/\j in 
    { 3/6, 4/5, 5/4, 6/3, 7/2, 8/1, 9/0} {
        \pgfmathsetmacro{\x}{(\i)*\wid}%
        \pgfmathsetmacro{\y}{(\j)*\wid}%
        \pgfmathsetmacro{\z}{(\j+8)*\wid}%

        \draw[blue, thick]  (\x,\y)--(\x+\wid,\y);
        \draw[blue, thick]  (\x,\z)--(\x+\wid,\z);
        \draw[blue, thick]  (\x,\y)--(\x, \y+\wid);
        \draw[blue, thick]  (\x+\wid,\z)--(\x+\wid, \z-\wid);
    }
\end{tikzpicture}
\caption{Construction of $\widetilde{f}$}
\label{fig:f-tilde}

    \end{figure}
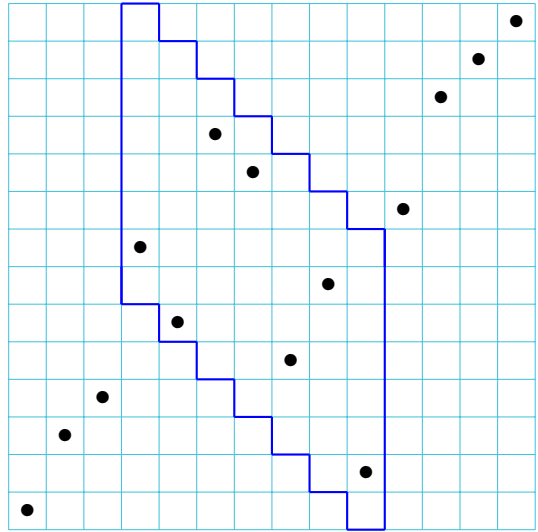
We now relate $\Lenart_0(f)$ in \Cref{def:L_0} with $\Lenart_n(\widetilde{f})$ as in \Cref{def:L_k}.

\begin{lemma}\label{lemma:relateL}
    For any $f,g\in \bound(k,n)$, $g\in \Lenart_0(f)$ if and only if $\widetilde{g}\in \Lenart_{n}(\widetilde{f})$.
\end{lemma}

We defer the proof to Section~\ref{subsec:proof}. Our strategy is as follows.
The forward direction follows by translating \(0\)-Bruhat covers of bounded affine permutations into
\(n\)-Bruhat covers of the associated permutations \(\widetilde f\in \symm_{2n}\).
The reverse direction uses the boundedness filtration in Lemma~\ref{lemma:bounded} to stay inside \(\widetilde{\bound(k,n)}\).   

We will use the following lemma, which follows from the fact that families of Schubert determinantal ideals or positroid ideals are distributive families; see \Cref{appendix: distributivity} for more details.

\begin{lemma}\label{lemma:hilbrecur}
Let $A$ be a finitely generated, $\mathbb{Z}^n$-graded $\k$-algebra. Let $I_1,\dots I_r$ be homogeneous ideals of $A$. In the following two cases,
\begin{enumerate}
    \item $A = \k[z_{i,j}]_{i,j\in [n]}, \deg(z_{i,j}) = t_i$, and each $I_i$ is a Schubert determinantal ideal;
    \item $A = R(k,n)$, $\deg(P_J) = t^J := \prod_{j\in J}t_j$, and each $I_i$ is a positroid ideal.
\end{enumerate}
    Then
    \begin{equation}\label{eqn:hilbrecur}
        \hilb(A/\bigcap_{i\in [r]}I_i) = \sum_{\emptyset\neq U\subseteq [r]}(-1)^{\#U-1}\hilb(A/\sum_{u\in U}I_u).
    \end{equation}
\end{lemma}

For a poset $P$ and a subset $U\subset P$, let $\lub_P(U)$ (or $\lub(U)$ if the underlying poset is clear from context) denote the set of least upper bounds for the elements in $U$.

\begin{lemma}
    \label{lem:inttosum}
    Let $P$ denote either the poset of $\bound(k,n)\sqcup\{\star\}$ or $\symm_n$. Let $U\subset P$. Then 
    \begin{equation}
    \label{eqn:sumtoint}
    \sum_{u\in U}I_u=\bigcap_{u'\in \lub_P(U)} I_{u'}.
    \end{equation}
    Here we declare $\star$ to be above all elements of $\bound(k,n)$ and $I_\star$ the ideal generated by all Pl\"ucker variables.
\end{lemma}
\begin{proof}
    Since Frobenius-split subvarieties are reduced, it is enough to show 
    \[\bigcap_{u\in U}X_u = \bigcup_{u'\in \lub_P(U)} X_{u'}\]
    as sets, 
    where $X_u = V(I_u)$ is the corresponding positroid/matrix Schubert variety. 
    
    We first prove ``$\supseteq$''. For any $(u,u')\in U\times \lub_P(U)$, since $u' \geq u$, we have $X_{u'}\subseteq X_u$. Therefore $\bigcup_{u'\in \lub_P(U)} X_{u'} \subseteq \bigcap_{u\in U}X_u$.
    
    Now we prove ``$\subseteq$''. Suppose there exists $X_v$, an irreducible component of $\bigcap_{u\in U}X_u$ such that $X_v\not\subseteq \bigcup_{u'\in \lub_{P}(U)}X_{u'}$. Then
    $v\ngeq u'$ for any $u'\in \lub_P(U)$. By the definition of $\lub_P(U)$, there exists $u\in U$ such that $u\nleq v$. Therefore $X_v\not\subseteq X_u$, a contradiction. We can then conclude that $\bigcap_{u\in U}X_u = \bigcup_{u'\in \lub_P(U)} X_{u'}$. 
\end{proof}

Therefore, by repeatedly applying \eqref{eqn:hilbrecur} and \eqref{eqn:sumtoint}, we can write 
\begin{equation}\label{eqn:hilbaltsgn}
    \hilb(A/\bigcap_{i\in [r]}I_i) = \sum_{u'\in U'}c_{u'}\hilb(A/I_{u'}),
\end{equation}
for some $U'\subseteq \bound(k,n)$ (or $\symm_n$) and $c_{u'}\in \mathbb{Z}$.

To understand the recursive construction of \eqref{eqn:hilbaltsgn}, we define the poset $\genP_{P}(T)$ to record the terms appearing on the right-hand side of \eqref{eqn:hilbaltsgn}. 
\begin{definition}
    \label{def:lub-poset}
    Let $P$ be a finite poset with a unique maximum, and $T\subset P$ a subset of elements in $P$. Define 
    \[\genP_P(T)=\begin{cases}
        T & \text{ if } |T| = 1 \\
        \bigcup_{T'\subseteq T}\genP_P(\lub_P(T')) & \text{ if } |T|>1
    \end{cases}
    \]
and we endow $\genP_{P}(T)$ with a poset structure by inheriting the partial order from $P$.
\end{definition}

\begin{remark}
     In the case where $P = \mathcal{S}_n$ or $\bound(k,n)\cup \{\star\}$, the least upper bounds of a subset $T\subseteq P$ correspond to the irreducible components of the intersection of matrix Schubert/positroid varieties indexed by $T$. The poset $\genP_{P}(T)$ is closely related to posets with the ``intersection-decompose'' property as defined in \cite{K09mobius}.
\end{remark}

\begin{example} Let $P$ be the poset on $\mathcal{S}_4$ under Bruhat order. Let $a:=2413$, $b:=4123$, $c:=2341$, $d:=3142$, and $T:=\{a,b,c,d\}$. Since generating the poset $\genP_{P}(T)$ only requires taking upper bounds, it is enough to look within the interval $[2143,4321]$ as in Figure~\ref{fig:2143-4321} instead of the entire $P$. We have
\begin{align*}
    \lub(a,b) = \{4213\}, \lub(a,c) &= \{2431\},\lub(a,d) = \{3412\}\\
    \lub(b,c) = \{4231\},\lub(b,d) &= \{4132\},\lub(c,d) = \{3241\}\\
    \lub(a,b,c) = \{4231\}, \lub(a,b,d) &= \{4312,4231\}, \lub(a,c,d) = \{3421,4231\}\\
    \lub(b,c,d) = \{4231\}, &\lub(a,b,c,d) = \{4231\}.
\end{align*}
By Definition~\ref{def:lub-poset}, if $\lub(T')$ is a singleton, then $\genP_{P}(\lub(T')) = \lub(T')$. Therefore
\[\genP_{P}(T) = \bigcup_{T'\subseteq T} \lub(T')\cup \genP_{P}(\lub(a,b,d))\cup\genP_{P}(\lub(a,c,d)).\]
Since 
\[\genP_{P}(\lub(a,b,d)) = \{4312,4231,4321\}\text{ and }\genP_{P}(\lub(a,c,d)) = \{3421,4231,4321\},\]
we conclude that $\genP_{P}(T) = \{w : 2143<w\leq 4321\}$.

\begin{figure}[ht]
    \centering
    \begin{tikzcd}[ampersand replacement=\&,cramped,sep=small]
	\&\&\&\& 2143 \\
	\& 2413 \&\& 4123 \&\& 2341 \&\& 3142 \\
	4213 \&\& 2431 \&\& 3412 \&\& 4132 \&\& 3241 \\
	\&\& 4312 \&\& 4231 \&\& 3421 \\
	\&\&\&\& 4321
	\arrow[no head, from=1-5, to=2-2]
	\arrow[no head, from=1-5, to=2-4]
	\arrow[no head, from=1-5, to=2-6]
	\arrow[no head, from=1-5, to=2-8]
	\arrow[no head, from=2-2, to=3-1]
	\arrow[no head, from=2-2, to=3-3]
	\arrow[no head, from=2-2, to=3-5]
	\arrow[no head, from=2-4, to=3-1]
	\arrow[no head, from=2-4, to=3-7]
	\arrow[no head, from=2-6, to=3-9]
	\arrow[no head, from=2-8, to=3-9]
	\arrow[no head, from=3-1, to=4-5]
	\arrow[no head, from=3-3, to=2-6]
	\arrow[no head, from=3-3, to=4-5]
	\arrow[no head, from=3-3, to=4-7]
	\arrow[no head, from=3-5, to=2-8]
	\arrow[no head, from=3-5, to=4-7]
	\arrow[no head, from=3-7, to=2-8]
	\arrow[no head, from=3-7, to=4-5]
	\arrow[no head, from=3-9, to=4-5]
	\arrow[no head, from=3-9, to=4-7]
	\arrow[no head, from=4-3, to=3-1]
	\arrow[no head, from=4-3, to=3-5]
	\arrow[no head, from=4-3, to=3-7]
	\arrow[no head, from=4-3, to=5-5]
	\arrow[no head, from=4-5, to=5-5]
	\arrow[no head, from=4-7, to=5-5]
\end{tikzcd}
    \caption{The interval $[2143,4321]$ in Bruhat order.}
    \label{fig:2143-4321}
\end{figure}
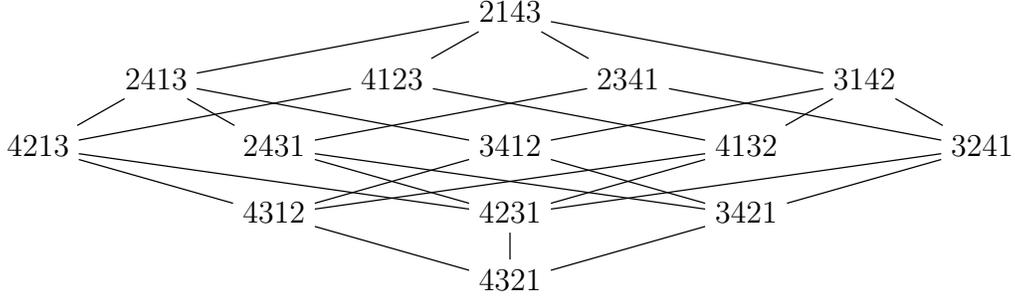

\end{example}

For $w\in \symm_n$ and $f\in \bound(k,n)$, set
\begin{equation}\label{eqn:C_k}
    C_k(w):= \{u\in \symm_n:u\gtrdot_k w\},
\end{equation}
and
\begin{equation}\label{eqn:C_0}
    C_0(f) := \{f'\in \bound(k,n): f' \gtrdot_0 f\}.
\end{equation}
Applying \eqref{eqn:hilbaltsgn} to the RHS of \eqref{eqn:wmonkhilb}, we have
\begin{equation}\label{eqn:schubrecur}
    \hilb(S/\bigcap_{u\in C_k(w)}\mathcal{I}_u) = \sum_{u'\in \genP_{\symm_n}(C_k(w))} c_{u'}\hilb(S/\mathcal{I}_{u'}),
\end{equation}
where each $c_{u'}\in \{0,\pm 1\}$ by Corollary~\ref{cor:grothendieck-agree}. Similarly, we have
\begin{equation}\label{eqn:positroidrecur}
    \hilb(R/\bigcap_{u\in C_0(f)}\cJ_f) = \sum_{f'\in \genP_{\bound(k,n)\sqcup \{\star\}}(C_0(f))} c_{f'}\hilb(R/\cJ_{f'}).
\end{equation}

\begin{lemma}\label{lemma:Ln}
    For $f\in\bound(k,n)$, let $\widetilde{f}$ be as defined in Definition~\ref{def:tilde} and $C_n(\widetilde{f})$ as in \eqref{eqn:C_k}. 
    Then $\Lenart_n(\widetilde{f})\subseteq \genP_{\symm_{2n}}(C_n(\widetilde{f}))$ consists of permutations $u'$ such that $c_{u'}\neq 0$ in \eqref{eqn:schubrecur}.
\end{lemma}

\begin{proof}
    Combining \eqref{eqn:schubrecur} and Corollary~\ref{cor:grothendieck-agree}, we have
    \begin{align*}
        \mathcal{K}(S/\bigcap_{u\in C_n(\tilde{f})}\mathcal{I}_u, \mathbf{1-x}) &= \sum_{w\in \mathcal{L}_n(\tilde{f})} (-1)^{\ell(w)-\ell(\tilde{f})-1}\mathcal{K}(S/\cI_{w},\mathbf{1-x})\\
        &= \sum_{u'\in \genP_{\symm_{2n}}(C_n(\tilde{f}))} c_{u'}\mathcal{K}(S/\cI_{u'},\mathbf{1-x}). 
    \end{align*}
Since $\mathcal{K}(S/I_w,\mathbf{1-x}) = \mathfrak{G}_{w}(\mathbf{x})$, we obtain the following equation on Grothendieck polynomials:
\begin{equation}\label{eqn:Grothendieckequal}
    \sum_{w\in \mathcal{L}_n(\tilde{f})} (-1)^{\ell(w)-\ell(\tilde{f})-1}\mathfrak{G}_w(\mathbf{x})
        = \sum_{u'\in \genP_{\symm_{2n}}(C_n(\tilde{f}))} c_{u'}\mathfrak{G}_{u'}(\mathbf{x}).
\end{equation}
Since the set of Grothendieck polynomials $\mathfrak{G}_w(\mathbf{x})$ for $w\in \symm_{2n}$ are linearly independent, \eqref{eqn:Grothendieckequal} holds if and only if 
\[\mathcal{L}_n(\tilde{f}) = \{u'\in \genP_{\mathcal{S}_{2n}(C_n(\tilde{f}))}: c_{u'}\neq 0\}\text{ and }c_{u'} = (-1)^{\ell(u')-\ell(\tilde{f})-1}.\]
This concludes the proof.
\end{proof}
To prove Theorem~\ref{thm:hilbrecur}, we
relate the coefficients $c_{f'}$ appearing on the right-hand side of \eqref{eqn:positroidrecur} with the coefficients $c_{u'}$ appearing on the right-hand side of \eqref{eqn:schubrecur} in the case where $w = \widetilde{f}\in \symm_{2n}$. 
We need the following proposition.

\begin{prop}\label{prop:chain}
    For $f\in \bound(k,n)$, if
    $f'\in \genP_{\bound(k,n)\sqcup \{\star\}}(C_0(f))\setminus \{\star\}$, then   $\widetilde{f'}\in \genP_{\symm_{2n}}(C_{n}(\widetilde{f}))$. Moreover, $c_{f'} = c_{\widetilde{f'}}$.
\end{prop}

We defer the proof of \Cref{prop:chain} to the next subsection. The key input is the poset isomorphism \(W_{A_{n-1}}f \cong W_P\widetilde f\) from Proposition~\ref{prop:poset-iso},
which is compatible with taking least upper bounds.
We then compare the recursive inclusion--exclusion constructions of \(\genP\) on both sides.

We now give a proof of our main theorem assuming \Cref{prop:chain}.

\begin{proof}[Proof of Theorem~\ref{thm:hilbrecur}]
The proof of this theorem builds on the following results:
\begin{enumerate}[(i)]
    \item the expansion \eqref{eqn:positroidrecur} for positroid Hilbert series with coefficients \(c_{f'}\);
    \item the analogous expansion \eqref{eqn:schubrecur} for matrix Schubert varieties with coefficients \(c_{u'}\);
    \item the coefficient comparison \(c_{f'}=c_{\widetilde{f'}}\) from Proposition~\ref{prop:chain};
    \item the identification of nonzero coefficients with Lenart endpoints from Lemma~\ref{lemma:Ln};
    \item  the equivalence \(\widetilde{f'}\in \Lenart_n(\widetilde f)\iff f'\in \Lenart_0(f)\) from Lemma~\ref{lemma:relateL}.
\end{enumerate}

Recall from \eqref{eqn:schubrecur} and \eqref{eqn:positroidrecur} that 
\[
    \hilb(S/\bigcap_{u\in C_n(\widetilde{f})}\mathcal{I}_u) = \sum_{u'\in \genP_{\symm_{2n}}(C_n(\widetilde{f}))} c_{u'}\hilb(S/\mathcal{I}_{u'}),
\]
\begin{equation}\label{eqn:Jun12aaa}
    \hilb(R/\bigcap_{u\in C_0(f)}\cJ_u) = \sum_{f'\in \genP_{\bound(k,n)\sqcup \{\star\}}(C_0(f))} c_{f'}\hilb(R/\cJ_{f'}).
\end{equation}
By \Cref{prop:chain}, for each $f'\neq \star$ appearing in \eqref{eqn:Jun12aaa}, $\widetilde{f'}\in \genP_{\symm_{2n}}(C_n(\widetilde{f}))$ and $c_{f'} = c_{\widetilde{f'}}$. Combining with \Cref{lemma:Ln}, we have $c_{f'}\neq 0$ if and only if $\widetilde{f'}\in \Lenart_n(\widetilde{f})$, and $c_{f'} = (-1)^{\ell(\widetilde{f'}) - \ell(\widetilde{f}) -1} = (-1)^{\ell(f') - \ell(f) -1}$ if it is non-zero. By \Cref{lemma:relateL}, $\widetilde{f'}\in \Lenart_n(\widetilde{f})$ if and only if $f'\in \Lenart_0(f)$, and 
we are left to show that $c_\star = 0$. 

Suppose to the contrary that $c_\star\neq 0$, then $\star \in \genP_{\bound(k,n)\sqcup \{\star\}}(C_0(f))$. This implies that there are $f_1,\dots,f_m\in \genP_{\bound(k,n)\sqcup \{\star\}}(C_0(f))\setminus \{\star\}$ such that $\star \in \lub(f_1,\dots,f_m)$. Geometrically, this is 
$\bigcap_{i = 1}^m \Pi_{f_i} = \emptyset$.
Let $[v,u]$ and $[v_i,u_i]$ be the Grassmann intervals such that $\Pi_{f} = \Pi_v^u$ and $\Pi_{f_i} = \Pi_{v_i}^{u_i}$. Since $f_i\in W_{A_{n-1}}f$ for all $f_i$, by Lemma~\ref{lemma:wtouv}, $u_i = u$ for all $i$. Therefore the $T$-fixed point indexed by $u([k])$ is a point on $\Pi_{f_i}$ for all $i$ and the intersection is not empty, a contradiction. 

The statement in \eqref{eqn:Jun1aaa} is precisely part (2) of \Cref{lemma:bounded}.
\end{proof}

\begin{remark}
As an anonymous referee pointed out, Equation \eqref{eqn:cotrans} in  \Cref{prop:cut-top-point} identifies the hyperplane section
$\{[\topp(f)]=0\}\cap \Pi_f$ scheme-theoretically with the union $\bigcup_{f'\in C_0(f)}\Pi_{f'}$; equivalently, $R/\bigcap_{f'\in C_0(f)}\cJ_{f'}$ is the coordinate ring of this union.
In the Frobenius split setting, the scheme-theoretic intersections among these strata are reduced, so
$\bigcup_{f'\in C_0(f)}\Pi_{f'}$ admits a formal inclusion--exclusion/M\"obius inversion expansion over the associated
intersection poset.
What is not immediate from this formalism is an explicit description of the nonzero terms and coefficients. On the other hand, our proof via
\Cref{thm:hilbrecur} shows that only $g\in \Lenart_0(f)\cap\bound(k,n)$ contribute, with coefficient
$(-1)^{\ell(g)-\ell(f)+1}$ (and \eqref{eqn:Jun1aaa}), which we obtain by comparison with the matrix Schubert case via
$f\mapsto \widetilde f$ (Definition~\ref{def:tilde}) and \Cref{cor:grothendieck-agree}.
\end{remark}

\subsection{Proof of Lemma~\ref{lemma:relateL} and Proposition~\ref{prop:chain}}
\label{subsec:proof}

Let $W_{A_{n-1}}\cong \symm_n $ be the maximal parabolic subgroup of $W_{\widehat{A}_{n-1}}$ generated by $\{s_1,\cdots,s_{n-1}\}$, and let $W_{P}\cong \symm_n\subset \symm_{2n}$ be the parabolic subgroup generated by the set of simple reflections $\{s_{k+1},\cdots, s_{n+k-1}\}$. We now extend the definition of $\widetilde{g}$ to include the case where $g\in W_{\widehat{A}_{n-1}}f$.
Let $g = wf\in W_{A_{n-1}}f$ and let $(i_1,\dots,i_\ell)$ be any reduced word of $w\in \symm_n$. Define $\widetilde{g} = w'\widetilde{f}$ where $w' = \prod_{j = 1}^\ell s_{k+i_j}\in W_P$. 
Consider the map 
\begin{align}\label{eqn:extendtilde}
    \begin{split}
        W_{A_{n-1}}f &\longrightarrow W_P\widetilde{f}\\
        g&\mapsto \widetilde{g}
    \end{split}.
\end{align}
This is an extension of the map $f\mapsto \widetilde{f}$ in Definition~\ref{def:tilde}. 

Our next proposition asserts that the map in \eqref{eqn:extendtilde} commutes with taking least upper bounds. 
\begin{prop}
    \label{prop:poset-iso}
     Let $f\in\bound(k,n)$. 
    Then the map $g\mapsto \tilde{g}$ in \eqref{eqn:extendtilde} is an isomorphism of posets:
    \[W_{A_{n-1}}f\cong W_P\widetilde{f}.\]
    Furthermore, for any $U\subseteq W_{A_{n-1}}f$, 
    \begin{equation}\label{eqn:lub}
        \widetilde{\lub_{W_{\widehat{A}_{n-1}}}(U)} = \lub_{\symm_{2n}}(\widetilde{U}).
    \end{equation}
\end{prop}

\begin{proof}
For the first claim, it is enough to show that if $g_1\gtrdot g_2\in W_{A_{n-1}}f$, then $\tilde{g_1}\gtrdot \tilde{g_2}$.
For any $g_1\gtrdot g_2$, we have $g_1 = t_{i,j}g_2$ for some $1\leq i<j\leq n$, and 
\begin{equation}\label{eqn:Dec1aaa}
    g_2^{-1}(m)\notin [g_2^{-1}(i),g_{2}^{-1}(j)]\text{ for all }i<m<j.
\end{equation}
Since $\widetilde{g_2}^{-1}(m+k) = g_2^{-1}(m)+n$ for all $m\in [n]$, 
\[\widetilde{g_2}^{-1}(m+k)\notin [\widetilde{g_2}^{-1}(i+k),\widetilde{g_{2}}^{-1}(j+k)]\text{ for all }i<m<j.\] 
Therefore $\widetilde{g_1} = t_{i+k,j+k}\, \widetilde{g_2}\gtrdot \widetilde{g_2}$ and the map $g\mapsto \widetilde{g}$ induces an injection in \eqref{eqn:Dec1aaa}. Since both $W_{A_{n-1}}f$ and $W_P\widetilde{f}$ are both isomorphic to $\symm_n$ as posets, the map in \eqref{eqn:extendtilde} is an isomorphism.

We now prove \eqref{eqn:lub}. Pick any subset $U\subseteq W_{A_{n-1}}f$. 
By Theorem~\ref{thm:lub}, for any $U\subseteq W_{A_{n-1}}f$ (resp. $W_P \tilde{f}$), the least upper bounds of $U$ in the poset $W_{\widehat{A}_{n-1}}$ (resp. $\symm_{2n}$) all lie inside the coset $W_{A_{n-1}}f$ (resp. $W_P \tilde{f}$). Therefore 
\[\lub_{W_{\widehat{A}_{n-1}}}(U) = \lub_{W_{A_{n-1}}f}(U)\text{ and }\lub_{\symm_{2n}}(\widetilde U) = \lub_{W_P\widetilde{f}}(\widetilde U).\]
\eqref{eqn:lub} then follows from the isomorphism of the two posets.
\end{proof}

To complete the proof of \Cref{prop:chain}, we need the following two technical lemmas.
\begin{lemma}
    \label{lem:Cf-in-coset}
      For $f\in \bound(k,n)$, $C_0(f)\subset W_{A_{n-1}}f$.
\end{lemma}
\begin{proof}
    Let $f'=ft_{ab}\gtrdot f$ where $b>0$ and $a\le 0$ and $f'\in \bound(k,n)$. Since $f$ is bounded and $f'\gtrdot f$, we must have $b\le n$ and $a\ge -n+1$. Since $f'$ is also bounded, we must have $f(b)\le n$ as well as $f(a)\ge 1$. Therefore $f'=t_{f(a)f(b)}f\in W_{A_{n-1}}f$. 
\end{proof}

\begin{lemma}\label{lemma:bounded}
     For $w\in \mathcal{L}_n(\widetilde{f})$, if $w\notin \widetilde{\bound(k,n)}$, then 
     \begin{enumerate}
         \item $w'\notin\widetilde{\bound(k,n)}$ for all $w'\in \mathcal{L}_n(\widetilde{f})$ such that $w'\geq w$;
         \item either $w = \widetilde{f}t_{ab}\gtrdot_k \widetilde{f}$ or there exists some $w'\in \mathcal{L}_{n}(\widetilde{f})$ such that $w'< w$ and $w'\notin\widetilde{\bound(k,n)}$.
     \end{enumerate}
\end{lemma}
\begin{proof}
    We assume $a<b$ for the reflection $t_{ab}$. 
    
    \noindent $(1)$:
    Notice that for any $w,w'\in \mathcal{L}_n(\widetilde{f})$, $w^{-1}(i) = w'^{-1}(i)$ for all $i\in [k]\cup [n{+}k{+}1,2n]$. 
    It is then enough to prove the Lemma when $w' = t_{ab}w$ where $a,b\in [k+1,n+k]$.
    If $w\notin \{\widetilde{g}:g\in \bound(k,n)\}$, then 
    $w^{-1}(i)\notin [i-k-n,i-k]$
    for some $i\in [k+1,n+k]$. If $a,b\neq i$, then $w'^{-1}(i) = w^{-1}(i)\notin [i-k,i-k+n]$. Hence $w'\notin \widetilde{\bound(k,n)}$. If $a=i<b$, then either
    \begin{equation}\label{eqn:Nov4aaa}
        w^{-1}(i)<i-k<b-k\text{ or } w^{-1}(b)>w^{-1}(i)>i-k+n.
    \end{equation}
    Since $w'^{-1}(b) = w^{-1}(i)$ and $w'^{-1}(i) = w^{-1}(b)$, by \eqref{eqn:Nov4aaa}, either 
    \[w'^{-1}(b)<b-k\text{ or }w'^{-1}(i)>i-k+n.\]
    Since $i,b\in [k+1,n+k]$, $w'\notin \widetilde{\bound(k,n)}$.
    Similarly, if $a<b=i$, then either 
    \begin{equation}\label{eqn:Nov4bbb}
        w^{-1}(a)<w^{-1}(i)<i-k \text{ or }w^{-1}(i)>i-k+n>a-k+n.
    \end{equation}
    Since $w'^{-1}(a) = w^{-1}(i)$ and $w'^{-1}(i) = w^{-1}(a)$, by \eqref{eqn:Nov4bbb}, either
    \[w'^{-1}(i)<i-k\text{ or }w'^{-1}(a)>a-k+n.\]
    We can then conclude that $w'\notin \widetilde{\bound(k,n)}$ for $w'\geq w \in \mathcal{L}_n(\widetilde{f})$. 
    
    \medskip

    \noindent $(2)$: Suppose $w\neq \widetilde{f}t_{ab}\gtrdot_k \widetilde{f}$, then by the definition of $\Lenart_n(\widetilde{f})$, there is some $u\in \Lenart_n(\widetilde{f})$ such that 
    \begin{equation}
        u\lessdot_k ut_{a_1b_1} \lessdot_k ut_{a_1b_1}t_{a_2b_2} = w
    \end{equation}
    where either ($b_1 = b_2$ and $a_1<a_2$)  or ($b_1>b_2$). If $u$ or $ut_{a_1b_1}$ is not bounded then we are done. Suppose they are both bounded, let $ut_{a'b'}$ be the unique permutation different from $ut_{a_1b_1}$ such that $u<ut_{a'b'}<w$. We are then done by the following claim.
    \begin{claim}\label{claim:Nov5aaa}
        $ut_{a'b'}\in \Lenart_n(\widetilde{f})$ and $ut_{a'b'}\notin \widetilde{\bound(k,n)}$. 
    \end{claim}
\noindent \emph{Proof of Claim~\ref{claim:Nov5aaa}:}
        We divide into three cases.
        \medskip
        
        \noindent \textbf{Case I} ($a_1,a_2,b_1,b_2$ are all distinct): In this case $t_{a_1b_1}$ and $t_{a_2b_2}$ commute. Hence
        $a' = a_2,b' = b_2$ and $ut_{a'b'}\in \Lenart_n(\widetilde{f})$. Since $ut_{a_1b_1}$ is bounded and $w$ is not, either $w(a_2)>a_2+k$ or $w(b_2)<b_2-n+k$. Since $ut_{a'b'}(a_2) = w(a_2)$ and $ut_{a'b'}(b_2) = w(b_2)$, $ut_{a'b'}\notin \widetilde{\bound(k,n)}$.

        \medskip
        \noindent \textbf{Case II} ($b_1 = b_2$ and $a_1<a_2$): In this case we have $u(a_2)<u(a_1)<u(b_1) = u(b_2)$. Then $a' = a_2,b' = b_1 = b_2$ and thus $ut_{a'b'}\in \Lenart_n(\widetilde{f})$. Since $ut_{a_1b_1}$ is bounded, $w$ is not bounded if and only if $w(b')< b'-n+k$. Since $ut_{a'b'}(b') = w(b')$, $ut_{a'b'}\notin \widetilde{\bound(k,n)}$.

        \medskip
        \noindent \textbf{Case III} ($a_1 = a_2$ and $b_1>b_2$): In this case we have $u(a_1) = u(a_2)< u(b_1)< u(b_2)$. Then $a' = a_1 = a_2, b' = b_2$ and thus $ut_{a'b'}\in \Lenart_n(\widetilde{f})$. Similar to Case II, since $ut_{a_1b_1}$ is bounded and $w$ is not, we have $w(a')> a'+k$. Since $ut_{a'b'}(a') = w(a')$, $ut_{a'b'}\notin \widetilde{\bound(k,n)}$.
\qed

\end{proof}

\begin{proof}[Proof of Lemma~\ref{lemma:relateL}]
We first observe that for $h,h'\in \bound(k,n)$, $h'\gtrdot_0 h$ if and only if $\widetilde{h'}\gtrdot_n \widetilde{h}$.
Let $f,g\in \bound(k,n)$. If $g\in \Lenart_0(f)$,  by \Cref{def:L_0} there is a  chain 
\[f\lessdot_0 ft_{a_1,b_1} \lessdot_0 \dots \lessdot_0 g\]
in $\Gamma_0(f)$.
Notice that if $h\in \bound(k,n)$ and $h\gtrdot_0 h'$, then $h'\in \bound(k,n)$. Since $g\in \bound(k,n)$, every element in the chain above lies in $\bound(k,n)$. 
We then have a saturated chain in $n$-Bruhat order in $\symm_{2n}$
\[\widetilde{f}\lessdot_n \widetilde{f}t_{a_1+n,b_1+n}\lessdot_n \dots \lessdot_n \widetilde{g}. \]
Equivalently, $\widetilde{g}\in \Lenart_n(\widetilde{f})$. 

    On the other hand, suppose $\widetilde{g}\in \Lenart_{n}(\widetilde{f})$, then by \Cref{def:L_k} there is a chain 
    \[\widetilde{f}\lessdot_n \widetilde{f}t_{a'_1,b'_1}\lessdot_n \dots \lessdot_n \widetilde{g} \]
    in $\Gamma_n(\widetilde{f})$. 
    Since $g\in \bound(k,n)$, by part (1) of \Cref{lemma:bounded}, every element in this chain is in $\widetilde{\bound(k,n)}$. By the  observation at the beginning of the proof, we recover a saturated $0$-Bruhat chain from $f$ to $g$ in $\Gamma_0(f)$. Therefore $g\in \Lenart_0(f)$.
\end{proof}

\begin{proof}[Proof of Proposition~\ref{prop:chain}] Let $V:=(V_1,\cdots, V_p)$ (resp. $U:=(U_1,\cdots,U_{q})$) be a sequence of sets of bounded affine permutations (resp. permutations) satisfying 
    \begin{itemize}
        \item $V_1\subseteq C_0(f),U_1\subseteq C_n(\widetilde{f})$;
        \item $V_{p} = \{f'\}, U_{q} = \{\widetilde{f'}\}$;
        \item $V_{i+1}\subseteq \lub(V_{i}),U_{i+1}\subseteq \lub(U_{i})$.
    \end{itemize}
We claim that the map sending $\{V_i\}_{i\in [m]}$ to $\{\widetilde{V_i}\}_{i\in [m]}$ is a bijection between $\mathcal{V}:=\{V\}$ and $\mathcal{U}:=\{U\}$, the sets of all such $V$ and $U$ respectively.

By Lemma~\ref{lem:Cf-in-coset}, $\widetilde{C_0(f)}$ is well-defined and thus $\widetilde{V_1}\subseteq \widetilde{C_0(f)}\subseteq C_n(\widetilde{f})$.
For $i\geq 1$, by \eqref{eqn:lub},
\[\widetilde{V_{i+1}} \subseteq \widetilde{\lub(V_i)} = \lub(\widetilde{V_i}).\]
Therefore $\{\widetilde{V_{i}}\}_{i\in [m]}$ satisfy all three conditions in Proposition~\ref{prop:chain}. The injectivity follows from the injectivity of the map $g\mapsto \widetilde{g}$ as in Proposition~\ref{prop:poset-iso}. For surjectivity, consider any $\{U_i\}_{i\in [m]}$ such that $U_m = \{\widetilde{f'}\}\subseteq \mathcal{L}_n(\widetilde{f})$. Since $U_{i+1}\subseteq \lub(U_{i})$, by (1) of Lemma~\ref{lemma:bounded}, 
\[U_{i+1}\subset \widetilde{\bound(k,n)}\implies U_{i}\subset \widetilde{\bound(k,n)}.\]
Since $U_m = \{\widetilde{f'}\}$, we have $U_i\subset \widetilde{\bound(k,n)}$ for all $i\in [m]$. Therefore the map is surjective. 

Let $c_{\widetilde{f'}},c_{f'}$ be as in \eqref{eqn:schubrecur} and \eqref{eqn:positroidrecur} respectively. Then
\[c_{\widetilde{f}'} = \sum_{U\in\; \mathcal{U}}(-1)^{\sum_{i}|U_i|-1}\text{ and }c_{f'} = \sum_{V\in\; \mathcal{V}}(-1)^{\sum_{i}|V_i|-1}.\]
By the bijection between $\mathcal{V}$ and $\mathcal{U}$ above, we conclude that $c_{\widetilde{f'}} = c_{f'}$.
\end{proof}

\appendix

\section{Least upper bounds in parabolic subgroups of Coxeter groups}

Let $W$ be a Coxeter group and $W_P$ be a parabolic subgroup. The main theorem in this appendix is the following.

\begin{theorem}\label{thm:lub}
Let \(W\) be a Coxeter group with Bruhat order \(\le\), and let \(W_P\subseteq W\) be a parabolic subgroup.
Fix \(f\in W\). For \(x,y\in W\), let \(\lub(x,y)\) denote the set of \emph{least} upper bounds of \(\{x,y\}\)
in Bruhat order (possibly empty, possibly containing multiple elements).
If \(f_1,f_2\in W_P f\), then every least upper bound of \(\{f_1,f_2\}\) lies in the same coset:
\[
\lub(f_1,f_2)\subseteq W_P f.
\]
\end{theorem}


We begin by recalling the following classical result on Coxeter groups.
Recall that \(D_L(w)\) (resp.\ \(D_R(w)\)) denotes the left (resp.\ right) descent set of \(w\) in \(W\) (see \cite{BB2005}*{Sec.~2.2}).

\begin{lemma}[Lifting property]\cite{BB2005}*{Proposition~2.2.7}\label{lemma:lift}
    Suppose $u< w$ and $s_i\in D_{L}(w)\setminus D_{L}(u)$, then $s_iu\leq w$ and $u\leq s_iw$. Similarly, if $s_j\in D_{R}(w)\setminus D_{R}(u)$, then $us_j\leq w$ and $u\leq ws_j$.
\end{lemma}
An immediate corollary of the lifting property is the following.
\begin{cor}\label{cor:lift}
    For simple reflections $s,s'$ such that $w\leq sw,ws'$, then either $sw,ws'\leq sws'$ or $w = sws'$. 
\end{cor}

Recall that the \textbf{Demazure product} $\ast$ on a Weyl group $W$ is defined, for a simple reflection $s$, by
\begin{equation}
    w\ast s = \begin{cases}
        ws & \text{ if }ws>w\\
        w & \text{ otherwise}
    \end{cases}.
\end{equation}
For any (not necessarily reduced) word $Q = s_{i_1}\dots s_{i_m}$, define 
\[\dem(Q):= ((s_{i_1}\ast s_{i_2})\ast \dots)\ast s_{i_m}.\]

We will use the Demazure product as a convenient way to produce an element above a given reduced subword in Bruhat order.

\begin{lemma}
\label{lem:dem}
    Let $Q$ be a not necessarily reduced word that contains a subword $Q_w$ that is a reduced word for $w\in W$. Then $\dem(Q)\ge w$.
\end{lemma}
\begin{proof}We proceed by induction on the length of $Q$. 
    Write $Q=Q' s_{i}$. If $s_{i}$ is not in $Q_w$, by the induction hypothesis we have $\dem(Q')\ge w$. Since $\dem(Q)\ge \dem(Q')$ the result follows. If $s_{i}$ is in $Q_w$, we have $ws_{i}<w$ and by the induction hypothesis $u:=\dem(Q')\ge ws_{i}$. We now claim that $\dem(Q)=\dem(Q's_i)\ge w$.  If $us_i>u$ then $\dem(Q)=us_i$. Since $us_i>ws_i$ and $s_i\in D_{R}(us_i)\setminus D_{R}(ws_i)$, by the lifting property, we have $us_i = \dem(Q)\ge w$.
    If $us_i<u$,
    since $s_i\in D_{R}(u)\setminus D_{R}(ws_i)$, again by the lifting property, $u = \dem(Q)\geq w$.
\end{proof}

\begin{proof}[Proof of Theorem~\ref{thm:lub}]
    It is enough to assume that $f\in W^P$ is a minimal coset representative. Let $w_1,w_2\in W_P$ be elements such that $f_1 = w_1f, f_2 = w_2f$. 
    
    We proceed by induction on $\ell(f)$. The base case is $f=e$, the identity. Assume $w\in \lub(w_1,w_2)$. We show that $w\in W_P$. Suppose to the contrary that $w\not\in W_P$ and assume $Q$ is a reduced word of $w$. Then there exists a simple reflection $s_k\not \in W_P$ which appears in the reduced word of $Q$. Let $Q'$ be the (not necessarily reduced) word obtained from $Q$ by crossing out this $s_k$ and let $w'=\dem(Q')$. By assumption, $Q'$ contains a reduced subword of $w_i$ for $i=1,2$. By Lemma~\ref{lem:dem}, we have $w\ge w_i$ for $i=1,2$. Since $w'<w$, it contradicts that $w\in \lub(w_1, w_2)$.

    Suppose the statement holds for all $f\in W^P$ with $\ell(f)< \ell$. We first consider the case where there exists $s_i \in D_{R}(g)\cap D_{R}(f)$. Notice that we also have $s_i \in D_{R}(w_1f),D_{R}(w_2f)$. By the lifting property, $gs_i\geq w_1fs_i,w_2fs_i$. If $gs_i\notin \lub(w_1fs_i,w_2fs_i)$, we can find $h$ where 
    \[w_1fs_i,w_2fs_i \leq h<gs_i.\]
    Since $s_i\in D_{R}(h\ast s_i)\setminus (D_{R}(w_1fs_i)\cup D_{R}(w_2fs_i))$, by the lifting property, we get 
    \[h\ast s_i \geq w_1f,w_2f.\]
    This contradicts the assumption that $g\in \lub(w_1f,w_2f)$ since $g>h\ast s_i$. Therefore $gs_i\in \lub(w_1fs_i,w_2fs_i)$.  Now by the inductive hypothesis, we have $gs_i = wfs_i$ for some $w\in W_P$ and thus $g = wf$ as desired.

    We are now left with the case where $D_{R}(g) \cap D_{R}(f) = \emptyset$. For any $s_i\in D_{R}(g)$, if $s_i\notin D_{R}(w_1f)\cup D_{R}(w_2f)$, then, by the lifting property, $g>gs_i\geq w_1f,w_2f$. This is impossible since $g\in \lub(w_1f,w_2f)$. As a result, we assume without loss that $s_i\in D_{R}(w_1f)$. Set
    \[f' = \begin{cases}
        w_2f & \text{ if }s_i\notin D_{R}(w_2f)\\
        w_2fs_i & \text{otherwise}
    \end{cases}.\]
    \begin{claim}\label{claim:fsi}
        $fs_i\in W_P f$, hence $w_1fs_i,f'\in W_Pf$.
    \end{claim}
    \begin{proof}
        Pick any $a_1\dots a_{\ell}\in \Red(f)$ and $b_1\dots b_m\in \Red(w_1)$, then since $w_1\in W_P$ and $f\in W^P$, $b_1\dots b_m a_1\dots a_\ell \in \Red(w_1f)$. Since $s_i\notin D_{R}(f)$, $a_1\dots a_\ell\,  i \in \Red(fs_i)$. Since $\ell(w_1fs_i) = \ell(w_1f) - 1$, there is a unique $j\in [m]$ such that 
        \[s_{b_j}\cdots s_{b_m}fs_i< s_{b_{j+1}} \cdots s_{b_m}fs_i.\]
        Set $u = s_{b_{j+1}} \cdots s_{b_m}f\in W$, we have 
        \[u\leq s_{b_j}u, us_i\text{ and }us_i>s_{b_j}us_i.\]
        By Corollary~\ref{cor:lift}, $u = s_{b_j}us_{i}$. Since $s_{b_i}\in W_P$ for all $i\in [m]$, we have $u\in W_P f$ and $s_{b_j}us_i\in W_P fs_i$. 
        Therefore $W_Pf=W_Pfs_i$ and thus 
        $w_1fs_i\in W_P f$.
        If $f' = w_2f$, it is clear that $f'\in W_Pf$. Otherwise, $f'\in W_Pf$ follows from the same reasoning as above.
    \end{proof}
    
    \begin{claim}\label{claim:gsi}
         $gs_i\in \lub(w_1fs_i,f')$. 
    \end{claim}
    \begin{proof}
        We first show that $gs_i\geq w_1fs_i,f'$. Since $s_i\in D_{R}(g)\cap D_{R}(w_1f)$ and $g\geq w_1f$, by the lifting property we have $gs_i\geq w_1fs_i$. Since $f'\leq w_2f\leq g$ and $s_i\in D_{R}(g)\setminus D_{R}(f')$, $f'<g$.
        Again by the lifting property, $gs_i\geq f'$.

        Suppose that $gs_i$ is not a least upper bound, and pick $h<gs_i$ such that $h\geq w_1fs_i,f'$. Consider $h' = h\ast s_i$. Since $g>gs_i>h$, by the lifting property we have $g>h'$. Since $s_i\in D_{R}(h')\setminus (D_{R}(w_1fs_i)\cup D_{R}(f'))$, again by the lifting property we have $h'\geq w_1f,w_2f$. This contradicts the assumption that $g\in \lub(w_1f,w_2f)$. Therefore $gs_i\in \lub(w_1fs_i,f')$. 
    \end{proof}

    We can now finish the proof by induction on $\ell(w_1)$ and $\ell(w_2)$. If either $\ell(w_1)$ or $\ell(w_2)$ is $0$, then $g = \max(w_1f,w_2f)$ and the theorem statement holds. By Claim~\ref{claim:fsi}, we write $$w_1fs_i = w_1'f\text{ and }f' = w_2'f$$
    where 
    $$w_1',w_2'\in W_P\text{ and }\ell(w_1')<\ell(w_1),\ell(w_2')\leq \ell(w_2).$$ 
    By Claim~\ref{claim:gsi}, $gs_i\in \lub(w_1'f,w_2'f)$ and by the inductive hypothesis, 
    $gs_i = w'f$ for some $w'\in W_P$. We are then done by Claim~\ref{claim:fsi} since
    \[
g = gs_is_i = w'fs_i \in W_Pf.\qedhere
    \]
\end{proof}

\section{Distributivity from concatenating Gr\"obner bases}\label{appendix: distributivity}
In this section we give a Gr\"obner-theoretic criterion ensuring that the sublattice generated by a finite family of ideals in a polynomial ring is distributive under sum and intersection. Using the characterization of distributive families via exactness of an associated \v{C}ech-type complex, we show that if initial ideals commute with sums for a fixed term order, then the family is distributive.\footnote{Thanks to Keller VandeBogert for pointing out this criterion for distributivity.} As a consequence, we obtain inclusion--exclusion formulas for multigraded Hilbert series in settings where such Gr\"obner compatibility holds.

Throughout this section, let $S=\k[x_1,\dots,x_n]$ be a polynomial ring in $n$ variables over a field $\k$.
Let $\cI=\{I_1,\dots,I_r\}$ be a finite family of ideals in $S$, and write $[r]=\{1,\dots,r\}$.
Let $\mathsf{L}(\cI)$ denote the \emph{sublattice} of the lattice of ideals of $S$ generated by $\{I_1,\dots,I_r\}$ under the operations
\[
A\wedge B:=A\cap B,\qquad A\vee B:=A+B.
\]

\begin{definition}\label{def:distributive-family}
We say that $\cI$ is \textbf{distributive} if the lattice $\mathsf{L}(\cI)$ is distributive, i.e.\ for all $A,B,C\in\mathsf{L}(\cI)$,
\begin{equation}\label{eq:distributive}
(A+B)\cap C = (A\cap C) + (B\cap C).
\end{equation}
\end{definition}

\begin{remark}\label{rem:monomial-distributive}
If $J,K\subseteq S$ are monomial ideals, let $\Mon(J)$ denote the set of monomials contained in $J$.
Then $\Mon(J)$ is an upset in the divisibility poset on monomials, and one has
\[
\Mon(J+K)=\Mon(J)\cup \Mon(K),\qquad \Mon(J\cap K)=\Mon(J)\cap \Mon(K).
\]
Consequently the lattice of monomial ideals is distributive (it identifies with a sublattice of the
Boolean lattice of subsets of monomials). In particular, any family of monomial ideals is distributive,
so \eqref{eq:distributive} holds for all monomial ideals $A,B,C$.
\end{remark}

For $U\subseteq [r]$ write
\[
I_U := \sum_{u\in U} I_u \quad (\text{with } I_\emptyset := 0),\qquad
I^U := \bigcap_{u\in U} I_u \quad (\text{with } I^\emptyset := S).
\]

\begin{definition}\label{def:cech-complexes}
Define the cochain complex $C^\bullet(S;\cI)$ by
\[
C^p(S;\cI) := \bigoplus_{\substack{U\subseteq [r]\\ |U|=p}} \frac{S}{I_U}.
\]
For a fixed $U=\{u_1<\cdots<u_p\}$ and $j\notin U$, let
\[
\rho_{U,j}: \frac{S}{I_U}\longrightarrow \frac{S}{I_{U\cup\{j\}}}
\]
be the natural quotient map. Define $d^p:C^p\to C^{p+1}$ on the $U$-summand by
\[
d^p|_{S/I_U} := \sum_{j\notin U} (-1)^{\#\{u\in U: u<j\}}\ \rho_{U,j}.
\]
Similarly, define the chain complex $C_\bullet(S;\cI)$ by
\[
C_p(S;\cI) := \bigoplus_{\substack{U\subseteq [r]\\ |U|=p}} I^U.
\]
For $U=\{u_1<\cdots<u_p\}$ and $1\le \ell\le p$, let $\iota_{U,\ell}:I^U\hookrightarrow I^{U\setminus\{u_\ell\}}$
be the inclusion map. Define $\partial_p:C_p\to C_{p-1}$ on the $U$-summand by
\[
\partial_p|_{I^U}:=\sum_{\ell=1}^p (-1)^{\ell-1}\,\iota_{U,\ell}.
\]

\end{definition}

\begin{theorem}[{\cite{polishchuk2005quadratic}*{Proposition~1.7.2}}]\label{thm:pp-equivalence}
Let $\cI$ be a family of ideals in $S$. The following are equivalent:
\begin{enumerate}
\item $\cI$ is a distributive family.
\item $C^\bullet(S;\cI)$ is exact in all positive cohomological degrees.
\item $C_\bullet(S;\cI)$ is exact in all positive homological degrees.
\end{enumerate}
\end{theorem}

\begin{prop}\label{prop:hilbert-series-inclusion-exclusion}
Assume $S$ is $\NN^d$-graded with $\dim_\k(S_\aa)<\infty$ for all multidegrees $\aa\in\NN^d$, and that
$I_1,\dots,I_r$ are homogeneous with respect to this grading.
If $\cI$ is distributive, then
\begin{equation}\label{eq:hilb-inclusion-exclusion}
\Hilb\!\left(\frac{S}{\bigcap_{i=1}^r I_i}; t_1,\dots, t_d\right)
=\sum_{\emptyset\neq U\subseteq [r]} (-1)^{|U|+1}\ \Hilb\!\left(\frac{S}{\sum_{u\in U} I_u}; t_1,\dots, t_d\right).
\end{equation}
\end{prop}

\begin{proof}
Since the $I_i$ are homogeneous, each $C^p(S;\cI)$ inherits an $\NN^d$-grading from $S$, and all differentials are homogeneous of multidegree $0$.
By Theorem~\ref{thm:pp-equivalence}, $H^p(C^\bullet(S;\cI))=0$ for $p>0$.

Note that $C^0(S;\cI)=S$ and $d^0:S\to \bigoplus_{i=1}^r S/I_i$ is the diagonal quotient map, so
\[
H^0(C^\bullet(S;\cI))=\ker(d^0)=\bigcap_{i=1}^r I_i \subseteq S.
\]
Fix a multidegree $\aa\in\NN^d$. Taking the alternating sum of $\k$-dimensions in degree $\aa$ (Euler characteristic) gives
\[
\dim_\k\!\left(\bigcap_{i=1}^r I_i\right)_\aa
= \sum_{p=0}^r (-1)^p \sum_{\substack{U\subseteq [r]\\ |U|=p}}
\dim_\k\!\left(\frac{S}{I_U}\right)_\aa.
\]
Convert $\dim_\k(\bigcap_{i=1}^r I_i)_\aa$ to $\dim_\k(S/(\bigcap_{i=1}^r I_i))_\aa$ using the short exact sequence
$0\to \bigcap_{i=1}^r I_i \to S \to S/(\bigcap_{i=1}^r I_i)\to 0$, i.e.
\[
\dim_\k\!\left(\frac{S}{\bigcap_{i=1}^r I_i}\right)_\aa
=\dim_\k(S_\aa)-\dim_\k\!\left(\bigcap_{i=1}^r I_i\right)_\aa.
\]
After cancellation of the $p=0$ term (which is $\dim_\k(S_\aa)$), the remaining alternating sum is exactly \eqref{eq:hilb-inclusion-exclusion} degreewise. Summing over $\aa$ yields the Hilbert series identity.
\end{proof}

Fix a monomial order $\prec$ on $S$. For $0\neq f\in S$, write $\inn_\prec(f)$ for its leading monomial
and $\inn_\prec(I)$ for the monomial ideal generated by $\{\inn_\prec(f):f\in I\}$.

\begin{theorem}\label{thm:gb-concatenate-distributive}
Let $\prec$ be a monomial order on $S$, and let $\cI=\{I_1,\dots,I_r\}$ be homogeneous ideals.
Assume that for every subset $U\subseteq [r]$,
\begin{equation}\label{eq:initial-commutes-with-sums}
\inn_\prec\!\left(\sum_{u\in U} I_u\right) \;=\; \sum_{u\in U}\inn_\prec(I_u).
\end{equation}
Then $\cI$ is a distributive family.
\end{theorem}

\begin{proof}[Proof using filtrations]
By Theorem~\ref{thm:pp-equivalence}, it suffices to prove that 
$C^\bullet(S;\cI)$ is exact in positive degrees.

Fix an integer $q\ge 0$ (total degree in the standard $\NN$-grading).
Let $\Mon_q=\{m_{q,1}\prec \cdots \prec m_{q,N_q}\}$ 
be the degree $q$ monomials ordered increasingly by $\prec$.
Define a filtration on $S_q$ by
\[
F_p S_q := \Span_\k\{m_{q,1},\dots,m_{q,p}\}
\qquad (0\le p\le N_q),
\]
and let $F_p(S/J)_q$ denote the image of $F_pS_q$ in $(S/J)_q$ for any homogeneous ideal $J$.

For each $k\ge 0$, the degree $q$ strand of the complex $C^\bullet(S; \cI)$ is
\[
C^k(S;\cI)_q
=
\bigoplus_{\substack{U\subseteq [r]\\ |U|=k}}
(S/I_U)_q,
\]
equipped with the direct-sum filtration
\[
F_p C^k(S;\cI)_q
:=
\bigoplus_{\substack{U\subseteq [r]\\ |U|=k}}
F_p(S/I_U)_q.
\]
Since the differentials are induced by $\id_S$, they preserve this filtration.

For any homogeneous ideal $J$, there is a canonical isomorphism
\begin{equation}\label{eq:gr-quotient-initial-tight}
\gr_F\!\bigl((S/J)_q\bigr)
\;\cong\;
(S/\inn_\prec(J))_q.
\end{equation}
This follows because the class of $m_{q,p}$ in 
$F_p(S/J)_q/F_{p-1}(S/J)_q$ vanishes precisely when 
$m_{q,p}\in \inn_\prec(J)$, so the associated graded has a basis given by the degree $q$ monomials not in $\inn_\prec(J)$.

Applying \eqref{eq:gr-quotient-initial-tight} with $J=I_U$ and using
\eqref{eq:initial-commutes-with-sums}, we obtain degreewise identifications
\[
\gr_F\!\bigl((S/I_U)_q\bigr)
\;\cong\;
\bigl(S/\sum_{u\in U}\inn_\prec(I_u)\bigr)_q,
\]
compatible with the differentials.
Hence the associated graded of the degree $q$ strand of
$C^\bullet(S;\cI)$ identifies with the degree $q$ strand of
\[
C^\bullet\!\bigl(S;\{\inn_\prec(I_1),\dots,\inn_\prec(I_r)\}\bigr).
\]

Each $\inn_\prec(I_i)$ is a monomial ideal, so by Remark~\ref{rem:monomial-distributive} the family
$\{\inn_\prec(I_1),\dots,\inn_\prec(I_r)\}$ is distributive. Hence, by Theorem~\ref{thm:pp-equivalence},
the complex $C^\bullet\!\bigl(S;\{\inn_\prec(I_1),\dots,\inn_\prec(I_r)\}\bigr)$ is exact in all positive degrees.
Thus $\gr_F\bigl(C^\bullet(S;\cI)_q\bigr)$ is exact in positive degrees.

By the standard filtered-complex argument (e.g.\ \cite{weibel1994introduction}*{Theorem~5.5.1}),
exactness of the associated graded in positive degrees implies exactness of
$C^\bullet(S;\cI)_q$ in positive degrees.
Since this holds for every $q$, the full complex $C^\bullet(S;\cI)$
is exact in positive degrees.

By Theorem~\ref{thm:pp-equivalence}, $\cI$ is distributive.
\end{proof}

\begin{example}[Matrix Schubert determinantal ideals]
Let $S=\k[z_{i,j}]_{i,j\in[n]}$ with $\deg(z_{i,j})=t_i$, and let $I_1,\dots,I_r$ be Schubert determinantal ideals.
For any antidiagonal term order, the compatibility \eqref{eq:initial-commutes-with-sums} is proved in
\cite{knutson2009frobenius}*{Theorem~4, Corollary~2, and Sec.~7.2}.
Moreover, the relevant initial ideals are monomial, so \Cref{thm:gb-concatenate-distributive} applies and yields the Hilbert series identity \Cref{eq:hilb-inclusion-exclusion}.
\end{example}

\begin{example}[Positroid ideals]
Let $S=R(k,n)$ with $\deg(P_J)=t^J$, and let $I_1,\dots,I_r$ be positroid ideals.
Under the reverse lexicographic term order, \Cref{thm:std-mon-intersect} establishes \eqref{eq:initial-commutes-with-sums}.
Hence \Cref{thm:gb-concatenate-distributive} applies, and we obtain the Hilbert series identity \Cref{eq:hilb-inclusion-exclusion}.
\end{example}

\bibliographystyle{amsplain}
\bibliography{biblio}

\end{document}